\numberwithin{equation}{section}
\newtheorem{thm}{Theorem}
\newtheorem{cor}[thm]{Corollary}
\newtheorem{lem}[thm]{Lemma}
\newtheorem{prop}[thm]{Proposition}
\theoremstyle{definition}
\newtheorem{dfn}[thm]{Definition}
\theoremstyle{definition}
\newtheorem{rmk}[thm]{Remark}
\theoremstyle{definition}
\theoremstyle{definition}
\newtheorem*{notation}{Notation}
\newcommand{\C}{\mathbb{C}}
\newcommand{\R}{\mathbb{R}}
\newcommand{\N}{\mathbb{N}}
\newcommand{\ud}{\mathrm{d}}
\DeclarePairedDelimiter{\floor}{\lfloor}{\rfloor}
\DeclarePairedDelimiter{\ceil}{\lceil}{\rceil}
\DeclarePairedDelimiter{\bracket}{\langle}{\rangle}
\begin{document}

\title[Dilated Averages over Polynomial Curves]{Uniform $L_x^p - L^q_{x,r}$ Improving for Dilated Averages over Polynomial Curves}

\author{Jonathan Hickman}

\address{Jonathan Hickman, Room 5409, James Clerk Maxwell Building, University of Edinburgh, Peter Guthrie Tait Road, Edinburgh, EH9 3FD.}
\email{j.e.hickman@sms.ed.ac.uk}

\begin{abstract} Numerous authors have considered the problem of determining the Lebesgue space mapping properties of the operator $\mathcal{A}$ given by convolution with affine arc-length measure on some polynomial curve in Euclidean space. Essentially, $\mathcal{A}$ takes weighted averages over translates of the curve. In this paper a variant of this problem is discussed where averages over both translates and dilates of a fixed curve are considered. The sharp range of estimates for the resulting operator is obtained in all dimensions, except for an endpoint. The techniques used are redolent of those previously applied in the study of $\mathcal{A}$. In particular, the arguments are based upon the refinement method of Christ, although a significant adaptation of this method is required to fully understand the additional smoothing afforded by averaging over dilates. 

\end{abstract}

\maketitle

\section{Introduction and statement of results}

Let $\gamma: I \rightarrow \R^d$ denote a (parametrisation of a) smooth curve in $\R^d$ where $I \subseteq \R$ is an interval. Consider the operator $A$ defined, at least initially, on the space of for all test functions $f$ on $\R^d$ by
\begin{equation}\label{operator}
Af(x,r) := \int_I f(x-r \gamma (t)) \alpha(t)\,\ud t \qquad \textrm{for all $(x,r) \in \R^d \times [1,2]$.}
\end{equation}
Here $\alpha$ denotes some density which is assumed to be smooth and non-negative. Thus $A$ takes averages over translates of dilates of $\gamma$. A natural problem is to establish the range of $(p,q_1,q_2)$ for which there is an a priori mixed norm estimate either of the form
\begin{equation}\label{mixed1}
\|Af\|_{L^{q_2}_tL^{q_1}_x(\R^d \times [1,2])} \leq C_{d, \gamma} \|f\|_{L^p(\R^d)}
\end{equation}
or
\begin{equation}\label{mixed2}
\|Af\|_{L^{q_1}_xL^{q_2}_t(\R^d \times [1,2])} \leq C_{d, \gamma} \|f\|_{L^p(\R^d)}.
\end{equation}
This question subsumes the study of the $L^p$ mapping properties of both single averages and maximal functions associated to space curves. The archetypical case to consider is when $\gamma := h \colon [0,1] \to \R^d$ is the so-called \emph{moment curve} given by 
\begin{equation}
h(t) := (t, t^2, \dots, t^d)
\end{equation}
with constant density $\alpha \equiv 1$. With this choice of curve and density the following results are known:

\begin{itemize}
\item Taking $q_2 = \infty$ in \eqref{mixed1} reduces matters to determining the set of $(p,q)$ for which the single averages
\begin{equation}\label{single}
\mathcal{A}_rf(x) := \int_0^1 f(x - rh(t))\,\ud t
\end{equation}
are type $(p,q)$ uniformly for $r \in [1,2]$. The $L^p-L^q$ mapping properties of $\mathcal{A}:= \mathcal{A}_1$ were investigated in low dimensions in a number of papers \cite{Littman1973, Oberlin1987, Oberlin1997, Greenleaf1999, Oberlin1999} before being completely determined in all dimensions by Stovall \cite{Stovall2009} using powerful new methods developed by Christ \cite{Christb, Christ1998}.

\item On the other hand, if one sets $q_2 = \infty$ in \eqref{mixed2} the situation is very different. In particular, one now wishes to understand the $L^p-L^q$ mapping properties of the maximal function $\mathcal{M}$ associated to $h$, defined by 
\begin{equation*}
\mathcal{M}f(x) := \sup_{1 \leq r \leq 2} |\mathcal{A}_rf(x)|.
\end{equation*}
A celebrated theorem of Bourgain \cite{Bourgain1986} established $L^p-L^p$ mapping properties for $d=2$; this result was extended by Schlag \cite{Schlag1997a} who proved an almost-sharp range of $L^p-L^q$ estimates.\footnote{More precisely, both Bourgain and Schlag studied the circular maximal function rather than the parabolic variant discussed here. However, in this context both objects can be understood via the same techniques.} However, the problem of determining even the $L^p-L^p$ range remains open in all other dimensions. Some partial results in this direction are given in \cite{Pramanik2007}.\footnote{It is remarked that this brief survey is far from complete: there are many other results and, in particular, an extensive literature investigating these problems for more general classes of curves.}
\end{itemize}

In this paper the special case of \eqref{mixed1} and \eqref{mixed2} where $q_1 = q_2$ is considered. In particular, Theorem \ref{momentthm} below almost completely determines the set of $(p,q)$ for which $A$ is bounded from $L^p_x(\R^d)$ to $L^q_{x,r}(\R^d \times [1,2])$ when $\gamma(t) := h(t)$ is the moment curve. Testing the inequality on some simple examples (see Section \ref{necessity}) shows such a bound is possible only if $(1/p, 1/q)$ lies in the trapezium $\mathcal{T}_d$ given by the closed, convex hull of the set 
\begin{equation*}
\{(0, 0), (1,1), (1/p_1, 1/q_1), (1/p_2, 1/q_2)\}
\end{equation*}
where
\begin{equation*}
\left(\frac{1}{p_1}, \frac{1}{q_1} \right):= \left(\frac{1}{d}, \frac{d-1}{d(d+1)}\right) \quad \textrm{and} \quad \left(\frac{1}{p_2}, \frac{1}{q_2} \right):= \left( \frac{d^2 - d+2}{d(d+1)}, \frac{d-1}{d+1} \right).
\end{equation*}

This condition is shown to be sufficient, at least up to an endpoint. 

\begin{thm}\label{momentthm} For $d \geq 2$ the operator $Af(x,r) := \mathcal{A}_rf(x)$ is bounded from $L^p_x(\R^d)$ to $L^q_{x,r}(\R^d\times [1,2])$ for all $(1/p,1/q) \in \mathcal{T}_d\setminus \{(1/p_1, 1/q_1)\}$.
If $(1/p,1/q) \notin \mathcal{T}_d$, then $A$ is not restricted weak-type $(p,q)$. 
\end{thm}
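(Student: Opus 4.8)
The plan is to treat necessity and sufficiency separately. Necessity is routine and I would defer the computations to Section \ref{necessity}: one tests the putative restricted weak-type inequality $\|A\chi_E\|_{L^{q,\infty}_{x,r}}\lesssim|E|^{1/p}$ against the indicators of a small ball, of a $\delta$-neighbourhood of the surface $\{r\,h(t):t\in[0,1],\,r\in[1,2]\}\subseteq\R^d$, and of a $\delta$-neighbourhood of the curve $h([0,1])$, together with the ``dual'' versions obtained by testing $A$ against indicators $\chi_F$ of sets of the same shapes inside $\R^d\times[1,2]$, and the elementary constraint $q\ge p$ forced by non-compactness of $\R^d$; letting $\delta\downarrow 0$ these yield the defining inequalities of $\mathcal{T}_d$.

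For sufficiency I would first dispose of the easy pieces. The vertices $(0,0)$ and $(1,1)$ are trivial, the latter by Fubini since $[1,2]$ has finite measure. Moreover $\|Af\|_{L^q_{x,r}}\le\sup_{r\in[1,2]}\|\mathcal{A}_rf\|_{L^q_x}$, and the scaling $f\mapsto f(r\cdot)$ gives $\|\mathcal{A}_r\|_{L^p_x\to L^q_x}=r^{d(1/q-1/p)}\|\mathcal{A}_1\|_{L^p_x\to L^q_x}$, which is $\le\|\mathcal{A}_1\|_{L^p_x\to L^q_x}$ for $r\ge 1$ since $q\ge p$ throughout $\mathcal{T}_d$; hence $A$ inherits every bound enjoyed by the single average $\mathcal{A}=\mathcal{A}_1$. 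By the sharp $L^p$--$L^q$ theory for $\mathcal{A}$ (Stovall) this already gives strong-type estimates on the full single-average region, which is a proper subpolygon of $\mathcal{T}_d$ containing $(0,0)$, $(1,1)$ and the vertex $(1/p_2,1/q_2)$. The whole content of the theorem is therefore concentrated at the remaining vertex $(1/p_1,1/q_1)=(1/d,\tfrac{d-1}{d(d+1)})$, which lies strictly outside the single-average region; it suffices to prove the single restricted weak-type bound there, which by the usual duality is equivalent to
\[
\bigl|\langle A\chi_E,\chi_F\rangle\bigr|\;\lesssim_d\;|E|^{1/d}\,|F|^{(d^2+1)/(d(d+1))}\qquad\text{for all }E\subseteq\R^d,\ F\subseteq\R^d\times[1,2].
\]
Real interpolation of this inequality with the strong-type estimates at $(0,0)$, $(1,1)$ and $(1/p_2,1/q_2)$ then covers every point of $\mathcal{T}_d$ except $(1/p_1,1/q_1)$ itself, which cannot be recovered because it is an extreme point of the admissible region.

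The displayed estimate is where the refinement method of Christ enters. After the standard preliminary reductions one may assume $\langle A\chi_E,\chi_F\rangle\ge\lambda|F|$ and, after finitely many rounds of refinement, replace $E$ and $F$ by subsets $E'$, $F'$ of comparable measure such that for each $(x,r)\in F'$ the set $\{t:x-r\,h(t)\in E'\}$ has measure $\gtrsim\lambda$, for each $y\in E'$ the set $\{(s,\rho)\in[0,1]\times[1,2]:(y+\rho\,h(s),\rho)\in F'\}$ has two-dimensional measure $\gtrsim\lambda$, and so on down the chain. One then runs $h$-polygonal chains alternating between $F'$ and $E'$,
\[
(x^0,r^0)\ \rightsquigarrow\ y_1=x^0-r^0\,h(t_1)\ \rightsquigarrow\ (x^1,r^1)=(y_1+\rho_1\,h(s_1),\rho_1)\ \rightsquigarrow\ y_2=\cdots,
\]
so that $y_k=x^0-r^0\,h(t_1)+\sum_{j<k}\rho_j\bigl(h(s_j)-h(t_{j+1})\bigr)\in E'$, and estimates $|E'|$ — hence $|E|$ — from below by a change of variables in a well-chosen $d$-dimensional sub-map of the parameter list $(t_\bullet,s_\bullet,\rho_\bullet)\mapsto y_k$; the mirror-image chains, differentiated into the $\R^d\times[1,2]$-directions, bound $|F|$ from below, and combining the two lower bounds produces the asserted relation between $\lambda$, $|E|$ and $|F|$.

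The crux, and the point at which a genuine adaptation of Christ's scheme is required, is the Jacobian and distributional analysis of these sub-maps once the dilation parameters $\rho_j$ are allowed to vary. For single averages one always differentiates $y_k$ in $d$ of the $t$-variables and meets a clean Vandermonde factor $\prod_{i<j}|t_i-t_j|$; here the additional smoothing is only visible upon differentiating in a mixture of $t$- and $\rho$-variables, where for the moment curve, writing $h(s)-h(t)=(s-t)\,w(s,t)$ with $w(s,t)=(1,s+t,\dots)$ and $w(t,t)=h'(t)$, the columns $\partial_{t_1}y_k=-r^0h'(t_1)$ and $\partial_{\rho_j}y_k=h(s_j)-h(t_{j+1})$ produce a determinant comparable to $r^0\prod_j|s_j-t_{j+1}|$ times a Vandermonde-type factor in $t_1$ and the nodes $\tfrac12(s_j+t_{j+1})$ — a product of \emph{generic}, not-necessarily-small, gaps, and this is the source of the improved exponent $1/d$. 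Making this rigorous forces one (i) to choose the $d$ parameters to differentiate adaptively, according to whether the refined fibres are concentrated in the $t$- or in the $\rho$-direction, and to control the multiplicity of the resulting map, and (ii) to prove a correspondingly more elaborate sublevel-set, or ``band'', inequality tracking the joint distribution of the $t$-, $s$- and $\rho$-fibres. I expect step (ii) to be the main obstacle: it is the combinatorial-geometric bookkeeping that converts the Jacobian lower bounds into the sharp power of $\lambda$, and it is precisely this argument that yields only restricted weak-type — rather than strong-type — behaviour at $(1/p_1,1/q_1)$, which is what forces the exclusion of that single endpoint.
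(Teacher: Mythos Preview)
Your proposal is correct and follows essentially the same route as the paper: reduce via Stovall's single-average bound at $(1/p_2,1/q_2)$ and real interpolation to the restricted weak-type $(p_1,q_1)$ inequality, then prove that by an adapted Christ refinement in which the $d$ parameters to differentiate are chosen adaptively according to a dichotomy on the two-dimensional fibres (the paper's ``red/blue'' colouring of the even indices, Lemma~\ref{towerlem}). The only cosmetic differences are that the paper tracks two parameters $\alpha=\langle A\chi_E,\chi_F\rangle/|F|$ and $\beta=\langle A\chi_E,\chi_F\rangle/|E|$ rather than a single $\lambda$, and proves just one of the equivalent lower bounds \eqref{equivweake} or \eqref{equivweakf} (depending on a case split) rather than combining both.
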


The proof of Theorem \ref{momentthm} proceeds by establishing a restricted weak-type inequality at the endpoint $(p_1, q_1)$. Therefore, except for the question of whether this weak-type endpoint inequality can be strengthened the theorem completely determines the $L^p$ mapping of $A$ for the given choice of curve.

The $d=2$ case of Theorem \ref{momentthm} (when the curve is also a hypersurface) is already known to hold with a strong-type inequality at the endpoint. This result essentially appears, for example, in the work of Strichartz \cite{Strichartz1977} and Schlag and Sogge \cite{Schlag1997}. Furthermore, in \cite{Strichartz1977, Schlag1997} it is observed that the critical $L^2_x-L^6_{x,r}$ inequality for dilated averages over circles is equivalent to a Stein-Tomas Fourier restriction theorem for a conic surface and connections between this theory and estimates for certain evolution equations are also discussed. In addition, the $d=2$ case follows from more recent work of Gressman \cite{Gressman2006, Gressman2013} utilising methods which are rather combinatorial in nature. The combinatorial techniques found in \cite{Gressman2006} are akin to the arguments found in the present article; both are based on earlier work of Christ \cite{Christ1998}, discussed later in the introduction. 

For $d \geq 3$ the results appear to be new and, indeed, no previous (non-trivial) partial results are known to the author. It is remarked that the connection between the theory of dilated averages, Fourier restriction and analysis of PDE appears to be confined to the hypersurface setting but nevertheless Theorem \ref{momentthm} is arguably of interest in its own right. 

Theorem \ref{momentthm} is in fact a special case of a more general result, Theorem \ref{bigthm}, described below. Indeed, rather than restricting attention to $h$, this paper considers $A$ defined with respect to any polynomial curve. In this setting the statement of the results requires some preliminary motivation and definitions.

Given an arbitrary curve $\gamma$, when investigating the mapping properties of \eqref{operator} the key consideration is curvature. One would expect $A$ is non-degenerate (in the sense that the largest possible range of estimates hold for $A$) if and only if the $d-1$ curvature functions associated to $\gamma$ are non-vanishing in the support of the density $\alpha$. This kind of phenomenon is well-known in the context of single averages and maximal functions\footnote{Although in the case of maximal functions one must also consider the position of the curve in the plane, see \cite{Iosevich1994}.} (the latter over curves in $\R^2$) and, indeed, many other operators whose definition depends on some submanifold (or family of submanifolds) of $\R^d$ (see, for instance, \cite{Christ1999, Tao2003}). One method for quantifying the relationship between the curvature of $\gamma$ and the boundedness of $A$ is to introduce a specific choice of weight $\lambda_{\gamma}$ in the definition of the operator.  In particular, $\lambda_{\gamma}$ is carefully chosen to vanish at the flat points of the curve so as to ameliorate the effect of the degeneracies. One can then hope to achieve $L^p_x - L^q_{x,r}$ boundedness for the full range of exponents corresponding to the non-degenerate case under mild hypotheses on the curve. This strategy follows the example of numerous authors (notably Drury \cite{Drury1990}, Oberlin \cite{Oberlin2002}, Dendrinos, Laghi and Wright \cite{Dendrinos2009}, Stovall \cite{Stovall2014, Stovall2010} and Dendrinos and Stovall \cite{Dendrinos}) who, in considering averages defined with respect to degenerate curves, have chosen the underlying measure in the definition of the operator to be the so-called \emph{affine arc-length} measure, described below. This measure has the desired effect of dampening any degeneracies of the curve or surface and also makes the problem both affine and parametrisation invariant. 

To make this discussion precise, define the torsion $L_{\gamma}$ of the curve to be the function
\begin{equation*}
L_{\gamma}(t) := \det (\gamma^{(1)}(t) \dots \gamma^{(d)}(t) )
\end{equation*}
where $\gamma^{(i)}$ denotes the $i$th derivative of $\gamma$, viewed as a column vector. This function vanishes precisely when any of the $d-1$ curvature functions associated to $\gamma$ vanish. The affine arc-length measure $\ud \mu_{\gamma}$ on $\gamma$ is then defined by
\begin{equation*}
\int f \,\ud \mu_{\gamma} := \int_{\R} f(\gamma(t)) \lambda_{\gamma}(t)\,\ud t
\end{equation*}
whenever $f \in C_c(\R^d)$, say, where $\lambda_{\gamma}(t) := |L_{\gamma}(t)|^{2/d(d+1)}$. In this paper the operator $A_{\gamma}$ given by convolution with dilates of this measure is studied. Explicitly,
\begin{equation}\label{Poperator}
A_{\gamma}f(x,r) := \int_{\R} f(x-r \gamma(t)) \lambda_{\gamma}(t)\,\ud t
\end{equation}
for all test functions $f$ on $\R^d$. 

The choice of weight $\lambda_{\gamma}$ is further motivated by the fact that the resulting measure exhibits both parametrisation and affine invariance (for a detailed discussion see, for example, \cite{Christc}). Consequently, for all exponents $1 \leq p,q < \infty$ satisfying the relation $1/q = 1/p - 2/d(d+1)$ any $L^p_x - L^q_{x,r}$ inequality for $A_{\gamma}$ is affine invariant in the sense that if $\gamma$ is replaced with $X \circ \gamma$ for some invertible linear transformation $X$, then the estimate still holds with the same constant. One can therefore hope to achieve estimates for $A_{\gamma}$ which are uniform over all $\gamma$ belonging to a large class of curves. A counter-example due to Sj\"olin \cite{Sjolin1974} demonstrates such uniformity is impossible if the class includes curves which exhibit an arbitrarily large number of oscillations (see also \cite{Dendrinos}). It is therefore natural to postulate uniform estimates are possible over all polynomial curves of some fixed degree, since the degree controls the number of oscillations.

\begin{thm}\label{bigthm} Let $d \geq 2$ and $P \colon \R \to \R^d$ be a polynomial curve whose components have maximum degree $n$. Then
\begin{equation*}
\|A_Pf\|_{L^q_{x,r}(\R^d \times [1,2])} \leq C_{n,d} \|f\|_{L^p_x(\R^d)}
\end{equation*}
whenever $(1/p, 1/q)$ lies in $\mathcal{T}_d \setminus\{(1/p_1, 1/q_1)\}$ and satisfies $1/q = 1/p - 2/d(d+1)$. Here the constant $C_{n,d}$ depends only on $p$, $q$, the dimension $d$ and the degree $n$ of the polynomial mapping. 
\end{thm}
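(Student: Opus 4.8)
The plan is to combine a reduction to a bounded family of ``model'' sub-curves with Christ's method of refinements, the latter substantially modified so as to exploit the dilation variable $r$.

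First I would carry out the standard preliminary reductions. Using the parametrisation and affine invariance of $\ud\mu_P$, together with the structure theory for polynomial curves developed by Dendrinos, Laghi and Wright, one may partition $\R$ into $O_{n,d}(1)$ intervals on each of which the torsion $L_P$ has constant sign and $|L_P|$ and the relevant minors of $(P^{(1)}\cdots P^{(d)})$ are each comparable to monomials; after an affine change of coordinates each such piece becomes a controlled perturbation of a monomial curve. It therefore suffices to prove the estimate for $A_P$ when $P$ is a single such nice piece, with a bound depending only on $n$ and $d$. Moreover, the intersection of $\mathcal{T}_d\setminus\{(1/p_1,1/q_1)\}$ with the affine arc-length scaling line $1/q = 1/p - 2/d(d+1)$ is the half-open segment joining $(1/p_1,1/q_1)$ to $(1/p_2,1/q_2)$, so it is enough to establish restricted weak-type inequalities at the two vertices $(p_1,q_1)$ and $(p_2,q_2)$: strong type on the open segment then follows by Marcinkiewicz-type interpolation of restricted weak-type bounds, while a separate argument built on the same ideas upgrades the bound at the non-excluded vertex $(p_2,q_2)$ to strong type.

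The core is the restricted weak-type estimate, which I would approach via refinements. Fix measurable $E\subseteq\R^d$ and $F\subseteq\R^d\times[1,2]$ and set $\beta := \langle A_P\chi_E,\chi_F\rangle/|F|$, so that the target inequality becomes $\beta|F|\lesssim|E|^{1/p}|F|^{1/q'}$. Passing to a subset of comparable measure one may assume $A_P\chi_E(x,r)\sim\beta$ on $F$, and a dual refinement ensures the portion of $E$ visible from $F$ is suitably spread out. The heart of the matter is then a quantitative incidence bound obtained by iterating the ``curve move'': starting from a point of (the refined) $F$, one alternates between moving along the curve (introducing a parameter $t_i$) and re-dilating (introducing a parameter $r_j\in[1,2]$), so that after a bounded number of steps these parameters trace out a map $\Phi$ from an open subset $\Omega$ of parameter space into a set of measure $O(|E|)$. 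After $\beta$-combing $\Omega$ to discard degenerate configurations one (i) bounds $|\Omega|$ from below by a power of $\beta$ times a power of $|F|$, (ii) bounds $|\Phi(\Omega)|$ from above by $\lesssim|E|$, and (iii) bounds the Jacobian of $\Phi$ from below using the monomial-type control on $P$ and the affine arc-length weight $\lambda_P = |L_P|^{2/d(d+1)}$. Combining these three facts and optimising over the number of iterations produces the desired powers of $|E|$ and $|F|$.

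I expect the main obstacle to be exactly step (iii), together with the dimension count for the alternating map $\Phi$. For single averages one iterates only curve moves and the relevant Jacobian collapses to a Wronskian-type determinant in the $t_i$, whose lower bound is classical; here $\Phi$ genuinely mixes the vectors $P(t_i)$, their derivatives $P^{(1)}(t_i)$ and the scalars $r_j$, so its Jacobian is a new, ``mixed'' determinant, and showing that it is non-degenerate on the combed domain---while simultaneously extracting the sharp exponent of $\beta$, which is precisely what encodes the extra smoothing afforded by averaging over dilates---is where Christ's method must be adapted in an essential way. A further technical nuisance is to arrange the combing and the refinements so that they interact cleanly with the bounded range $r\in[1,2]$, incurring no logarithmic losses and keeping all constants uniform in the degree $n$.
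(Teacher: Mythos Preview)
Your overall architecture---Dendrinos--Wright-type decomposition followed by a Christ refinement scheme that also moves in the dilation variable---is exactly the route the paper takes. Two substantive points, though.

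\textbf{The $(p_2,q_2)$ vertex is free.} You propose to prove restricted weak-type at $(p_2,q_2)$ via refinements and then upgrade to strong type. This is unnecessary: for each fixed $r\in[1,2]$ the operator $f\mapsto A_Pf(\cdot,r)$ is a single affine arc-length average along a polynomial curve, and Stovall's theorem already gives the strong $(p_2,q_2)$ bound uniformly in $r$. Taking the $L^{q_2}_r([1,2])$ norm of both sides yields the strong-type $(p_2,q_2)$ inequality for $A_P$ immediately. All the genuine work sits at the $(p_1,q_1)$ vertex, and one may further assume $\alpha>\beta$ (in the notation $\alpha|F|=\beta|E|=\langle A_P\chi_E,\chi_F\rangle$), since the known $(p_2,q_2)$ bound already delivers the $(p_1,q_1)$ restricted weak-type inequality when $\alpha\le\beta$.

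\textbf{The mechanism you are missing.} You correctly flag the dimension count and the mixed Jacobian as the crux, but ``optimising over the number of iterations'' will not by itself resolve them: at each even step of the tower one acquires \emph{two} parameters $(r_j,t_j)$, so after $N$ steps the parameter space has dimension roughly $3N/2$, not $N$, and there is no single stopping time giving both the right dimension and the right exponent of $\beta$. The paper's key new idea is a dichotomy imposed on each even (two-dimensional) fibre: either the new $t$-parameter is well separated from its predecessor (call the step ``red''), in which case both $t_j$ and $r_j$ are kept as free variables, or it is not, in which case the refinement forces the new dilation $r_j$ to be $\gtrsim(\beta/\alpha)^{1/2}$-separated from the previous one (``blue''), and one then \emph{freezes} the associated $t$-variable and keeps only $r_j$. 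This colouring simultaneously fixes the dimension count (each step now contributes either one or two free parameters, and one stops at the first $N$ for which the running total hits $d$ or $d+1$) and supplies exactly the lower bounds needed in the Jacobian: red steps feed the Vandermonde factor, blue steps feed a product $\prod(\rho_\nu-\rho_{\nu-1}^*)$ that replaces the missing $t$-separation. The Jacobian itself is then compared, via an integral representation and Stovall's derivative estimates for $J_P$, to $\Delta(\rho)\,J_P(\tau,x)$; the error terms are controlled precisely because the blue $s_\nu$'s are small. Without this red/blue freezing scheme the mixed determinant you describe does not admit a usable lower bound on the combed domain.
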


Again, the theorem will follow by demonstrating a uniform restricted weak-type $(p_1, q_1)$ inequality holds. In addition, Theorem \ref{bigthm} is almost-sharp in the sense that if $(1/p, 1/q)$ does not lie in the intersection of $\mathcal{T}_d$ with the line $1/q = 1/p - 2/d(d+1)$, then such estimates do not hold with a finite constant.

It is remarked that the result of Theorem \ref{bigthm} is new for dimensions $d \geq 3$ whilst the $d=2$ case follows from a very general theorem due to Gressman \cite{Gressman2013}. Indeed, Gressman's theorem, \emph{inter alia}, establishes the hypersurface analogue of Theorem \ref{bigthm} in all dimensions, up to and including all the relevant endpoints. 

If $\R$ is replaced with a bounded interval $I$ in the definition of $A_P$, then the resulting operator (which is denoted by $A_P^{\mathrm{c}}$) is trivially seen to be bounded from $L^p_{x}$ to $L^p_{x,r}$ for all $1 \leq p \leq \infty$. Real interpolation therefore yields the following corollary.

\begin{cor} The operator $A_P^{\mathrm{c}}$ is bounded from $L^p_{x,r}$ to $L^q_x$ for all $(1/p,1/q) \in \mathcal{T}_d\setminus \{(1/p_1, 1/q_1)\}$. If $(1/p,1/q) \notin \mathcal{T}_d$ and $L_P \not\equiv 0$, then $A_P$ is not restricted weak-type $(p,q)$. 
\end{cor}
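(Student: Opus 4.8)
The plan is to derive the positive assertion from Theorem~\ref{bigthm} together with an elementary bound, by real interpolation, and the negative assertion from the example-based analysis of Section~\ref{necessity}. I shall establish the mapping $A_P^{\mathrm{c}} \colon L^p_x \to L^q_{x,r}$ for $(1/p,1/q) \in \mathcal{T}_d \setminus \{(1/p_1,1/q_1)\}$; the mixed-norm formulation in the statement is then the corresponding bound for the transpose operator, which follows by a routine duality argument (using in addition that integration over the compact interval $r \in [1,2]$ is bounded on every $L^s$). One may assume $L_P \not\equiv 0$, since otherwise $\lambda_P \equiv 0$ and $A_P^{\mathrm{c}} = 0$.

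Two ingredients are needed. First, since $I$ is bounded and a positive power of a non-trivial polynomial is locally integrable --- its zeros being isolated and of finite order --- the weight $\lambda_P = |L_P|^{2/d(d+1)}$ belongs to $L^1(I)$; applying Minkowski's integral inequality for each fixed $r$ and then integrating in $r \in [1,2]$ gives $\|A_P^{\mathrm{c}} f\|_{L^p_{x,r}} \leq \|\lambda_P\|_{L^1(I)} \|f\|_{L^p_x}$ for every $1 \leq p \leq \infty$, that is, boundedness at each point of the diagonal edge $[(0,0),(1,1)]$ of $\mathcal{T}_d$. Second, for $f \geq 0$ one has $A_P^{\mathrm{c}} f \leq A_P f$ pointwise, because $\mathbf{1}_I \lambda_P \leq \lambda_P$; hence $A_P^{\mathrm{c}}$ inherits every $L^p_x \to L^q_{x,r}$ bound of $A_P$, in particular the strong-type estimates of Theorem~\ref{bigthm} along the critical edge $\mathcal{T}_d \cap \{ 1/q = 1/p - 2/d(d+1) \}$ away from the corner $(1/p_1,1/q_1)$, and the restricted weak-type $(p_1,q_1)$ estimate that underlies that theorem.

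Now interpolate. The trapezium $\mathcal{T}_d$ is the convex hull of its two parallel, slope-one edges --- the diagonal edge and the critical edge --- together with the two non-parallel edges joining $(0,0)$ to $(1/p_1,1/q_1)$ and $(1,1)$ to $(1/p_2,1/q_2)$. On the latter edge both endpoints carry strong-type bounds, so interpolation yields strong-type bounds along the whole edge. On the edge from $(0,0)$ to $(1/p_1,1/q_1)$ one has a strong-type bound at $(0,0)$ and only a restricted weak-type bound at the corner; since $1/q \leq 1/p$ throughout this edge, real (Marcinkiewicz-type) interpolation upgrades this to strong-type bounds on the open edge. Finally, each $(1/p,1/q)$ in the interior of $\mathcal{T}_d$ lies on a slope-one chord whose endpoints are interior points of the two non-parallel edges, so a further interpolation gives the strong-type bound there. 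This reaches every point of $\mathcal{T}_d$ except the corner $(1/p_1,1/q_1)$, which no such chord meets.

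For the negative assertion, given $L_P \not\equiv 0$ one picks $t_0$ with $L_P(t_0) \neq 0$; on a short interval about $t_0$ the curve $P$ has non-vanishing torsion, so after an affine change of coordinates and a reparametrisation it differs from the moment curve $h$ only by a harmless perturbation, with $\lambda_P$ comparable to a constant there. The test functions used in Section~\ref{necessity} to force $(1/p,1/q) \in \mathcal{T}_d$ for the moment-curve operator are supported near a single point of the curve and adapted to the parabolic scaling and to the interval $r \in [1,2]$, so they transplant verbatim to $P$ and show that $A_P$ --- and likewise $A_P^{\mathrm{c}}$, for a suitable $I$, and its transpose --- fails to be restricted weak-type $(p,q)$ whenever $(1/p,1/q) \notin \mathcal{T}_d$. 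The only point of the whole argument that is not entirely routine is the interpolation near the corner $(1/p_1,1/q_1)$: one must pass from a merely restricted weak-type endpoint there to genuine strong-type bounds at nearby points of the non-parallel edges, which is precisely the situation that real interpolation (rather than the complex method) is designed to handle; the rest reduces to Minkowski's inequality, pointwise domination, and routine interpolation bookkeeping.
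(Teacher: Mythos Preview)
Your core argument---the trivial $L^p_x\to L^p_{x,r}$ bound on the diagonal via Minkowski, the critical-line bounds inherited from Theorem~\ref{bigthm}, and real interpolation to fill in $\mathcal{T}_d\setminus\{(1/p_1,1/q_1)\}$---is correct and is exactly the paper's approach (the paper states only ``Real interpolation therefore yields the following corollary'' after recording the diagonal bound, so your write-up is considerably more detailed). The sketch of the negative assertion via localisation near a point of non-vanishing torsion is also the standard and correct way to transplant the examples of Section~\ref{necessity}.

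One point to flag: the stated direction ``$L^p_{x,r}\to L^q_x$'' in the Corollary is almost certainly a typographical slip for ``$L^p_x\to L^q_{x,r}$'' (consistent with the lead-in sentence and the very definition of $A_P^{\mathrm{c}}$), and your duality remark does not actually repair the literal statement. Duality sends $A_P^{\mathrm{c}}\colon L^p_x\to L^q_{x,r}$ to $(A_P^{\mathrm{c}})^*\colon L^{q'}_{x,r}\to L^{p'}_x$, so the literal statement would require $\mathcal{T}_d$ to be invariant under $(1/p,1/q)\mapsto(1-1/q,1-1/p)$ with $(1/p_1,1/q_1)$ fixed; neither holds (for instance $1/p_1+1/q_1=2/(d+1)\neq1$). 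You should simply prove the bound in the correct direction and note the typo, rather than invoke duality.
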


Notice, when $P = \gamma$ is the moment curve defined above, the torsion function $L_{\gamma}$ is constant and thus Theorem \ref{momentthm} is indeed a special case of Theorem \ref{bigthm}.

\begin{rmk} It is natural to ask whether the restricted weak-type $(p_1, q_1)$ endpoint can be strengthened to a strong-type estimate. This is certainly the case in dimension $d=2$ where the inequality is a consequence of the aforementioned theorem due to Gressman \cite{Gressman2013}. Furthermore, one may recover the strong-type bound for $d=2$ by combining the analysis contained within the present article with an extrapolation method due to Christ \cite{Christb} (see also \cite{Stovall2009}). It is possible that the argument can be adapted to the case where $d$ belongs to a certain congruence class modulo $3$ to (potentially) establish the strong-type bound in this situation. A more detailed discussion of the validity of the strong-type endpoint appears below in Remark \ref{strong type remark}. 
\end{rmk}

Theorems \ref{momentthm} and \ref{bigthm} belong to a growing body of works which have applied variants of the geometric and combinatorial arguments due to Christ \cite{Christ1998} to the study of operators collectively known as generalised Radon transforms, of which $A$ is an example. Essentially these operators are defined for any point $y$ belonging to $\Sigma$ an $n$-dimensional manifold by integration over a $k$-dimensional manifold $M_y$ which depends on $y$, where $k < n$ is referred to here as the \emph{dimension of the associated family}. The techniques of \cite{Christ1998} have fruitfully been applied and developed in, for instance,  \cite{Christ, Christa, Christ2002, Christ2008, Dendrinos2009, Gressman2004, Gressman2009, Stovall2014, Stovall2009, Stovall2010, Tao2003} to study the Lebesgue mapping properties of one-dimensional generalised Radon transforms $R$ which are, roughly, operators $R$ for which $R$ and its adjoint $R^*$ are both generalised Radon transforms given by integration over some family of curves. The approach has been less successful when considering $R$ which are \emph{unbalanced} in the sense that $R$ and $R^*$ are both generalised Radon transforms but the dimensions of the associated families are not equal, although it has still produced results in some specific cases, for example \cite{Erdogan2010, Gressman2006, Gressman2013}. The dilated averaging operator \eqref{operator} fits into this framework by setting $\Sigma := \R^d \times (1,2)$ and for each $(x,r) \in \Sigma$ defining $M_{(x,r)}$ to be the curve parametrised by $t \mapsto x - r \gamma(t)$. Observe that although $A$ is defined by integration over curves, the adjoint of $A$ is defined by integration over $2$-surfaces and hence the operator is unbalanced. 

The structure of the paper is as follows. In the following section the necessary conditions on $(p,q)$ for $A$ to be restricted weak type $(p,q)$ are discussed. In Section \ref{overview of refinement method} standard methods together with estimates for single averages are combined to reduce the proof of Theorem \ref{bigthm} to proving a single restricted weak-type inequality. Christ's method of refinements is also reviewed and used to establish the simple case of Theorem \ref{momentthm} when $d=3$. The remaining sections develop this method to be applicable in the general situation. 

\begin{notation} A word of explanation concerning notation is in order: throughout the paper $C$ and $c$ will be used to denote various positive constants whose value may change from line to line but will always depend only on the dimension $d$ and degree $\deg P$ of some fixed polynomial. If $X, Y \geq 0$, then the notation $X \lesssim Y$ or $Y \gtrsim X$ signifies $X \leq C Y$ and this situation is also described by ``$X$ is $O(Y)$''. In addition, $X \sim Y$ indicates $X \lesssim Y \lesssim X$. Finally, the cardinality of any finite set $B$ will be denoted by $\# B$. 

\end{notation}

\subsection*{Acknowledgement} The author wishes acknowledge his PhD supervisor, Prof. Jim Wright, for all his kind and patient guidance relating to this work. He would also like to thank both Marco Vitturi and Betsy Stovall for elucidating discussions regarding some of the references. 

\section{Necessary conditions}\label{necessity}

Suppose the operator $A$ from Theorem \ref{momentthm} satisfies a restricted weak-type $(p,q)$ inequality for some $1 \leq p,q < \infty$. Here it is shown that the exponents $p,q$ must satisfy four conditions, each corresponding to an edge of the trapezium $\mathcal{T}_d$. The first three conditions also appear in the study of the averaging operator $\mathcal{A}$ defined in the introduction and are deduced by the same reasoning. The remaining condition does not appear in the theory of single averages and here the dilation parameter plays a non-trivial r\^ole, although the arguments are only marginally different from those used to examine $\mathcal{A}$.  

\begin{figure}
\centering
\begin{tikzpicture}[
        scale=2,
        IS/.style={blue, thick},
        LM/.style={red, thick},
        axis/.style={very thick, ->, >=stealth', line join=miter},
        important line/.style={thick}, dashed line/.style={dashed, thin},
       dotted line/.style={dotted, thin},
        every node/.style={color=black},
        dot/.style={circle,fill=black,minimum size=4pt,inner sep=0pt,
            outer sep=-1pt},
    ]
    \draw[axis,-] (3.1,0) -|
                    (0,3.1) ;
    \node[below=0.2cm] at (1.5, 0) {$1/p$};
    \node[left=0.2cm] at (0, 1.5) {$1/q$};						
    \draw[important line]
            (0,0) coordinate (es) -- (3.0,3.0) coordinate (ee);
		\draw[important line]
            (0,0) coordinate (es) -- (1.0,0.5) coordinate (ee);		
    \draw[important line]
            (2.0,1.5) coordinate (es) -- (3.0,3.0) coordinate (ee);
		\draw[important line]
            (1.0,0.5) coordinate (es) -- (2.0,1.5) coordinate (ee);
 \draw[dashed line]
            (0,0) coordinate (es) -- (1.5,1.0) coordinate (ee); 
 \draw[dotted line]
            (0,3) coordinate (es) -- (3,3) coordinate (ee);
 \draw[dotted line]
            (3,0) coordinate (es) -- (3,3) coordinate (ee);

\path[] (1.0,0.5)
        node[dot] (int2) {};
 \node[below=0.2cm, right] at (1.0,0.5) {$ \big(\frac{1}{p_1}, \frac{1}{q_1}\big)$};
\path[] (2.0,1.5)
        node[dot] (int2) {};
 \node[below=0.2cm, right] at  (2.0,1.5) {$ \big(\frac{1}{p_2}, \frac{1}{q_2}\big)$};
\path[] (1.5,1)
        node[dot] (int2) {};
 \node[below=0.2cm, right] at  (1.5,1) {$ \big(\frac{1}{q_2'}, \frac{1}{p_2'}\big)$};				
\path[] (0,0)
        node[dot,label=below:{$(0,0)$}] (int2) {};
\path[] (3,3)
        node[dot,label=right:{$(1,1)$}] (int2) {};
	
\end{tikzpicture}
\caption{The operator $A$ of Theorem \ref{momentthm} is restricted weak-type $(p,q)$ if and only if $(1/p, 1/q)$ belongs to the (closed set bounded by the) bold trapezium. For comparison, the single average $\mathcal{A}$ defined in the introduction is restricted weak-type $(p,q)$ if and only if $(1/p, 1/q)$ belongs to the smaller trapezium formed by introducing the dashed line.} \label{fig}
\end{figure}
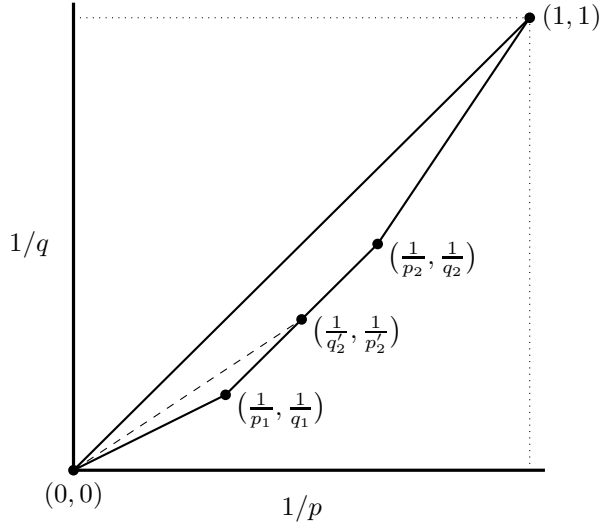

\begin{proof}[Proof (of Theorem \ref{momentthm}, necessity)]

To begin, a slight modification of a general theorem of H\"ormander \cite{Hormander1960} implies $p \leq q$.

For the second condition, let $R(\delta) : = \prod_{j=1}^d [-\delta^j,\delta^j]$ and note that
\begin{equation*}
A \chi_{R(\delta)}(x,r) = |\{ t \in [0,1] : x - r\gamma (t) \in R(\delta) \}|.
\end{equation*}
If $x \in (1/2) R(\delta)$, then whenever $t \in [0, \delta/4]$ it follows that
\begin{equation*}
|x_j -rt^j| \leq \delta^j/2 + 2 (\delta/4)^j \leq \delta^j \qquad \textrm{for $j = 1, \dots, d$}
\end{equation*}
and therefore
\begin{equation*}
A \chi_{R(\delta)}(x,r) \geq \frac{\delta}{4} \chi_{(1/2)R(\delta)}(x).
\end{equation*}
Consequently, applying the hypothesised restricted weak-type estimate,
\begin{equation*}
|R(\delta)| \lesssim \left|\left\{ (x,r) \in \R^d \times [1,2] : A \chi_{R(\delta)}(x,r) > \delta/8 \right\}\right| \lesssim \left( \frac{1}{\delta}|R(\delta)|^{1/p}\right)^{q}.
\end{equation*}
Observe $|R(\delta)| \sim_d \delta^{d(d+1)/2}$ and so the preceding inequality implies
\begin{equation*}
\delta^{d(d+1)/(2q)} \lesssim \delta^{d(d+1)/(2p) - 1} \qquad \textrm{for all $0 < \delta  < 1$.}
\end{equation*}
 The exponents $(p,q)$ must therefore satisfy the relation
\begin{equation*}
\frac{1}{q} \geq \frac{1}{p} - \frac{2}{d(d+1)}.
\end{equation*} 
The third condition is established by testing $A$ on $\chi_{B(\delta)}$, the characteristic function of a ball $B(\delta) \subset \R^d$ of radius $0 < \delta < 1$, centred at the origin. It is easy to see
\begin{equation}\label{nec2}
A \chi_{B(\delta)}(x,r) \gtrsim \delta \chi_{\mathcal{N}_r(\delta)}(x)
\end{equation}
where $\mathcal{N}_r(\delta)$ is a $\delta/3$-neighbourhood of the $r$-dilate of the moment curve; that is, the set of all points $x \in \R^d$ for which $|x- r\gamma(t_0)| < \delta/3$ for some $t_0 \in [0,1]$. The hypothesised restricted weak-type estimate together with \eqref{nec2} imply
\begin{eqnarray*}
\left|\left\{(x,r) \in \R^d \times [1,2] : x \in \mathcal{N}_r(\delta)\right\}\right| &\leq& \left|\left\{ (x,r) \in \R^d \times [1,2] : A \chi_{B(\delta)}(x,r) > C \delta \right\}\right| \\
&\lesssim& \left( \frac{1}{\delta}|B(\delta)|^{1/p}\right)^{q}.
\end{eqnarray*}
Observe $|B(\delta)| \sim_d \delta^d$ whilst $|\mathcal{N}_r(\delta)| \gtrsim \delta^{d-1}$ for all $r \in [1,2]$ and so the preceding inequality implies
\begin{equation*}
\delta^{(d-1)/q} \lesssim \delta^{d/p - 1}\qquad \textrm{for all $0 < \delta  < 1$.}
\end{equation*}
Thus the exponents must satisfy the relation
\begin{equation*}
\frac{1}{q} \geq \frac{d}{d-1} \frac{1}{p} - \frac{1}{d-1}.
\end{equation*} 

The final condition on $(1/p, 1/q)$ is deduced by considering the adjoint $A^*$ of $A$. A simple computation yields
\begin{equation*}
A^*g(x) = \int_1^2 \int_0^1 g(x + r\gamma(t), r)\,\ud t \ud r
\end{equation*}
for suitable functions $g$ defined on $\R^d \times [1,2]$. The hypothesis on $(p,q)$ is equivalent to the assumption that $A^*$ is restricted weak-type $(q',p')$. For $B(\delta)$ as above, let $F(\delta)$ denote the set $B(\delta) \times [1, 1+c\delta]$ for some small constant $c$. Observe
\begin{equation*}
A^* \chi_{F(\delta)}(x) \gtrsim \delta^2 \chi_{\mathcal{N}_1(\delta)}(-x)
\end{equation*}
where $\mathcal{N}_1(\delta)$ is as defined above. Therefore,
\begin{equation*}
|\mathcal{N}_1(\delta)| \leq \left|\left\{ x \in \R^d : A^* \chi_{F(\delta)}(x) \gtrsim \delta^2 \right\}\right| \lesssim \left( \frac{1}{\delta^{2}}|F(\delta)|^{1/q'}\right)^{p'}.
\end{equation*}
Finally, $|F(\delta)| \sim_d \delta^{d+1}$ whilst $|\mathcal{N}_1(\delta)| \gtrsim \delta^{d-1}$ and so the preceding inequality implies
\begin{equation*}
\delta^{(d-1)/p'} \lesssim \delta^{(d+1)/q' - 2} \qquad \textrm{for all $0 < \delta  < 1$.}
\end{equation*}
It follows that the exponents must satisfy the relation $(d+1)/q' - 2 \leq (d-1)/p'$ which can be rewritten as
\begin{equation*}
\frac{1}{q} \geq \frac{d-1}{d+1} \frac{1}{p}.
\end{equation*}

\end{proof}

\section{An overview of the refinement method}\label{overview of refinement method}

It remains to show the conditions on $(p,q)$ described in Theorem \ref{bigthm} are sufficient to ensure $A_P$ satisfies a type $(p,q)$ inequality with the desired uniformity. Real interpolation immediately reduces matters to establishing a uniform restricted weak-type $(p_1, q_1)$ and strong type $(p_2, q_2)$ estimate for $A_P$. The latter is easily dealt with by appealing to the existing literature. Indeed, a theorem of Stovall \cite{Stovall2010} implies the estimate
\begin{equation}\label{stovallest}
\|A_Pf( \,\cdot\, , r) \|_{L^{q_2}_x(\R^d)} \lesssim \|f\|_{L^{p_2}_x(\R^d)}
\end{equation}
holds for all $r \in [1,2]$. Taking $L^{q_2}_r([1,2])$-norms of both sides of \eqref{stovallest} yields the uniform type $(p_2, q_2)$ inequality for $A_P$ and the proof of Theorem \ref{bigthm} is therefore reduced to establishing the following Proposition.

\begin{prop}\label{weakthm} For $d \geq 2$ the inequality
\begin{equation}\label{weak}
\langle A_P\chi_E, \chi_F \rangle \lesssim |E|^{1/d}|F|^{(d^2+1)/d(d+1)}
\end{equation}
is valid for all pairs of Borel sets $E \subset \R^d$ and $F \subset \R^d \times [1,2]$ of finite Lebesgue measure. 
\end{prop}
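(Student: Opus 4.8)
The plan is to prove \eqref{weakthm} by adapting Christ's method of refinements — the same engine used by Stovall for single averages — but carrying the dilation variable $r$ through the entire argument, since it is precisely the extra room in $r$ that produces the gain over the single-average estimate. Throughout write $\gamma = P$. The first step is a normalisation: using the structure theory for polynomial curves, partition $\R$ into $O_{n,d}(1)$ intervals on each of which, after an invertible affine change of coordinates in $\R^d$ (under which \eqref{weak} is invariant, by the affine invariance of $A_P$) and a reparametrisation, the curve behaves like the moment curve $h$ — in particular the weight $\lambda_\gamma$ is comparable to a constant and the relevant Vandermonde-type determinants formed from $\gamma^{(1)},\dots,\gamma^{(d)}$ are comparable to those of $h$. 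It then suffices to prove \eqref{weak} (with a uniform constant) for the operator obtained by restricting the $t$-integral to one such interval and replacing $\lambda_\gamma$ by a bump, and a dyadic decomposition disposes of any residual variation of the weight. Set $\alpha := \langle A_P\chi_E,\chi_F\rangle/|F|$ and $\beta := \langle A_P\chi_E,\chi_F\rangle/|E|$, so that $\alpha|F| = \beta|E|$ and $\alpha,\beta \lesssim 1$; one may also assume $\alpha,\beta$ are not smaller than a fixed small power of $\min(|E|,|F|)$, as otherwise \eqref{weak} follows from the trivial bound $\langle A_P\chi_E,\chi_F\rangle \lesssim \min(|E|,|F|)$.

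Next comes the refinement. By the customary iterated pigeonholing (a two-sided refinement lemma in the style of Christ), pass to Borel sets $E^\ast \subseteq E$ and $F^\ast \subseteq F$ with $|E^\ast| \sim |E|$ and $|F^\ast| \sim |F|$ having the property that for every $(x,r) \in F^\ast$ the set $\{\,t : x - r\gamma(t) \in E^\ast\,\}$ has measure $\gtrsim \alpha$, and for every $y \in E^\ast$ the set $\{\,(t,\rho) : (y + \rho\gamma(t),\rho) \in F^\ast\,\}$ has measure $\gtrsim \beta$, with enough slack that one further refinement at each point still leaves these ``good parameter sets'' of comparable size, so that chains may be iterated indefinitely.

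Now run the chain. There are two basic moves: a \emph{forward move}, which sends a point $(x,r) \in F^\ast$ to $x - r\gamma(t) \in E^\ast$ and costs one parameter $t$ (ranging over a set of measure $\gtrsim \alpha$); and an \emph{adjoint move}, which sends $y \in E^\ast$ to $(y + \rho\gamma(t),\rho) \in F^\ast$ and costs two parameters $(t,\rho)$ (ranging over a set of measure $\gtrsim \beta$). A \emph{round trip} $E^\ast \to F^\ast \to E^\ast$ therefore costs three parameters and produces the displacement $y \mapsto y + \rho(\gamma(s) - \gamma(t))$ in $E$-space. Performing $j$ round trips from a fixed starting point gives a map $\Phi$ from a $3j$-dimensional box — with the forward side-lengths of size $\gtrsim \alpha$ and the adjoint slices of measure $\gtrsim \beta$ — into $E^\ast$; when $3j = d$ this is (generically) a diffeomorphism onto a subset of $E$, the Jacobian being, up to harmless powers of the dilation parameters, a generalised Vandermonde determinant in the $2j$ curve parameters (for $j=1$, $d=3$ one computes it to be $\sim (s-t)^4$). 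Since this Jacobian degenerates when the curve parameters cluster, one inserts a further multiscale pigeonholing of the parameter sets on the common separation scale of the $t$-variables, chosen to optimise the resulting lower bound. Changing variables gives $|E| \gtrsim (\text{box measure})\times(\text{Jacobian lower bound})$, i.e. a bound of the form $|E| \gtrsim \alpha^{j}\beta^{j}\cdot(\text{something})$; substituting $\beta = \alpha|F|/|E|$ to eliminate $\beta$ should, after the bookkeeping, yield exactly
\[ |E| \;\gtrsim\; \alpha^{d}\,|F|^{(d-1)/(d+1)}, \]
which on substituting $\alpha = \langle A_P\chi_E,\chi_F\rangle/|F|$ rearranges to \eqref{weak}.

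The main obstacle is everything in the third paragraph beyond the bare mechanism, and it is why ``a significant adaptation is required''. First, the building-block moves contribute $1$ or $2$ parameters, so a chain of round trips only ever produces parameter counts divisible by $3$; the count matches the target dimension $d$ automatically when $3 \mid d$ (with $d=3$ the clean base case that can be done by the plain method), but for $d \not\equiv 0 \pmod 3$ one is forced to overshoot and must then run a careful selection argument — extracting from the overdetermined configuration a full $d$-dimensional sub-family whose Jacobian is still of controlled size — in order to break even. Second, even when $3\mid d$ with $j>1$, the Jacobian is a genuine multivariate generalised Vandermonde, and squeezing the \emph{sharp} power out of it (so that one lands exactly at the endpoint $(p_1,q_1)$ rather than at some weaker interior exponent) is where the multiscale/separation analysis has to be done with precision. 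I would therefore first carry out the $d=3$ case in full to fix the structure, and then build the general selection-plus-multiscale machinery on top of it.
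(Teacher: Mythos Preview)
Your overall architecture is right — normalise via Dendrinos--Wright, set up $\alpha,\beta$, build a tower by alternating forward and adjoint moves, and bound $|E|$ (or $|F|$) below by a change of variables — and you have correctly put your finger on the obstruction: the forward move contributes $1$ parameter and the adjoint move $2$, so naive round trips land on multiples of $3$. But your proposed remedy (``overshoot and then extract a $d$-dimensional sub-family whose Jacobian is still controlled'') is precisely the hard part, and as stated it is not a plan: if you simply freeze one of the surplus parameters, the Jacobian degenerates and you recover nothing better than the single-average bound.

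The idea you are missing is a \emph{dichotomy at each adjoint step}. When you move $y\mapsto(y+\rho\gamma(t),\rho)$, the two-dimensional parameter set has measure $\gtrsim\beta$; either (``red'') its projection onto the $t$-axis is large — concretely $|t-t_{\mathrm{prev}}|\gtrsim(\alpha\beta)^{1/2}$ on a set of comparable measure — or (``blue'') it is not, in which case the $\rho$-direction must carry the mass, giving $|\rho-\rho_{\mathrm{prev}}|\gtrsim(\beta/\alpha)^{1/2}$. In the red case you keep both $(t,\rho)$ as free parameters (contribution $2$); in the blue case you \emph{freeze} $t$ and keep only $\rho$ (contribution $1$), and the Jacobian does not collapse because the $\rho$-separation supplies the missing factor. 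This makes the adjoint step contribute either $1$ or $2$ according to a refinement, so the total can be tuned to hit $d$ or $d+1$ exactly; one then stops at the least height $N$ where this happens and lands in one of three cases depending on the parity of $N$ and whether the count is $d$ or $d+1$. The Jacobian estimate in the blue case requires a genuine perturbation argument (compare $J_P$ at the frozen value to $J_P$ at nearby points via Stovall's derivative bounds and Gr\"onwall), which is where the parameter $\delta$ controlling the red/blue threshold is chosen small. Your ``multiscale pigeonholing on the separation scale'' is morally this dichotomy, but you need to make it per-step and two-dimensional (in $(t,\rho)$), not a global choice of scale in $t$ alone.

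Two smaller points. First, the reduction you sketch (``the curve behaves like the moment curve, $\lambda_\gamma$ comparable to a constant'') is too strong: what Dendrinos--Wright gives is $|L_P(t)|\sim|t-b|^K$ on each interval, and the weight $\lambda_P(t)\sim|t-b|^{2K/d(d+1)}$ cannot be removed by a dyadic decomposition without losing uniformity — it has to be carried through the tower estimates. Second, the case $\alpha\le\beta$ is disposed of not by the trivial bound but by the (already known) strong-type $(p_2,q_2)$ estimate for the single averages; this is needed to reduce to $\alpha>\beta$, which is what makes the $(\beta/\alpha)^{1/2}$ factors harmless.
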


The proof of Proposition \ref{weakthm} will utilise the geometric and combinatorial techniques introduced by Christ in \cite{Christ1998}, which were briefly discussed in the introduction. Collectively these techniques are referred to as the method of refinements. In this section the rudiments of the method are reviewed. It is instructive to consider the proof of the analogue of Proposition \ref{weakthm} in three dimensions ($d=3$) for the operator $A$ from the statement of Theorem \ref{momentthm}. In this situation the arguments are extremely simple and only a crude version of the refinement procedure is required.

Let $E$ and $F$ denote fixed sets satisfying the hypotheses of Proposition \ref{weakthm} for $d=3$. Assume, without loss of generality, that $\langle A\chi_E, \chi_F \rangle \neq 0$ where $A$ is the operator from Theorem \ref{momentthm}. One wishes to establish the inequality
\begin{equation*}
\langle A\chi_E, \chi_F \rangle \lesssim |E|^{1/3}|F|^{5/6},
\end{equation*}
from which Theorem \ref{momentthm} follows for the case $d=3$. Defining constants $\alpha$ and $\beta$ by the equation $\langle A\chi_E, \chi_F \rangle = \alpha|F| = \beta|E|$, one may rewrite the preceding inequality as a lower bound on the measure of $E$; explicitly,
\begin{equation}\label{momentequivweake}
|E| \gtrsim \alpha^{6} (\beta/\alpha).
\end{equation}

The basic idea behind Christ's method is to attempt to prove \eqref{momentequivweake} by using iterates of $A$ and $A^*$ to construct a natural parameter set $\Omega \subset \R^3$ and parametrising function $\Phi : \Omega \rightarrow E$ with a number of special properties. First of all, $\Phi$ must have bounded multiplicity so, by applying the change of variables formula,
\begin{equation*}
|E| \gtrsim \int_{\Omega} |J_{\Phi}(t)| \,\ud t 
\end{equation*}
where $J_{\Phi}$ denotes the Jacobian of $\Phi$. It then remains to bounded this integral from below by some expression in terms of $\alpha$ and $\beta$, which is possible provided that the parametrisation has been carefully constructed.

Following \cite{Christ1998}, define
\begin{eqnarray*}
F_1 &:=& \big\{ (x,r) \in F : A \chi_E(x,r) > \alpha / 2 \big\}, \\
E_1 &:=& \big\{ y \in E : A^* \chi_{F_1}(y)  > \beta / 4 \big\}.
\end{eqnarray*}
It is not difficult to see the assumptions on $E$ and $F$ imply $\langle A\chi_{E_1}, \chi_{F_1} \rangle \neq 0$ and therefore $E_1$ is non-empty. Fix $y_0 \in E_1$ and define a map $\Phi_1 \colon [1,2] \times [0,1] \to \R^3 \times [1,2]$ by
\begin{equation}\label{moment Phi1}
\Phi_1(r_1,t_1) := \left(\begin{array}{c}
y_0 + r_1h(t_1) \\ 
r_1 
\end{array} \right).
\end{equation}
Note that the set
\begin{equation*}
\Omega_1:= \left\{ (r_1, t_1) \in [1,2] \times [0,1] : \Phi_1(r_1,t_1) \in F_1 \right\}
\end{equation*}
satisfies $|\Omega_1| > \beta / 4$. Similarly, define a map $\Phi_2 \colon [1,2] \times [0,1]^2 \to \R^3$ by
\begin{equation}\label{moment Phi2}
\Phi_2(r_1, t_1, t_2) := y_0 + r_1h(t_1) - r_1h(t_2)
\end{equation}
and observe for each $(r_1, t_1) \in \Omega_1$ the set
\begin{equation*}
\Omega_2(r_1, t_1):= \left\{  t_2 \in [0,1] : \Phi_2(r_1, t_1, t_2) \in E \right\}
\end{equation*}
satisfies $|\Omega_2(r_1, t_1)| > \alpha / 2$. Finally, define the structured set
\begin{equation*}
\Omega_2 := \big\{(r_1, t_1, t_2) \in [1,2] \times [0,1]^2 : (r_1, t_1) \in \Omega_1 \textrm{ and } t_2 \in \Omega_2(r_1, t_1)  \big\}.
\end{equation*}
Now, $\Omega := \Omega_2 \subset \R^3$ is the parameter set alluded to above and $\Phi := \Phi_2 |_{\Omega} : \Omega \rightarrow E$ the parametrising function. Observe $\Phi$ is well-defined by the preceding observations and the polynomial nature of this map ensures it has almost everywhere bounded multiplicity.\footnote{That is, for almost every $x \in \R^d$ the cardinality of the pre-image $\Phi^{-1}(\{x\})$ is no greater than some fixed (finite) constant. For further details see Lemma \ref{multiplicity} below.} The absolute value of the Jacobian $J_{\Phi}(r_1, t_1, t_2)$ of $\Phi$ may be expressed as
\begin{equation*}
r_1^2 \left| \det \left(\begin{array}{ccc}
1 & 1 & t_2 - t_1 \\
2t_1 & 2t_2 & t_2^2 - t_1^2 \\
3t_1^2 & 3t_2^2 & t_2^3 - t_1^3 \end{array}\right) \right| = 6r_1^2 \bigg| \int_{t_1}^{t_2} V(t_1, t_2, x) \,\ud x \bigg|
\end{equation*}
where $V(x_1, \dots, x_m) := \prod_{1\leq i<j \leq m} (x_j - x_i)$ denotes, and will always denote, the $m$-variable Vandermonde polynomial. The sign of $V(t_1, t_2, x)$ does not change as $x$ varies between $t_1$ and $t_2$ and so modulus signs can be placed inside the integral in the above expression. Thus, 
\begin{equation*}
|J_{\Phi}(r_1, t_1, t_2)| \gtrsim |t_1 - t_2| \int_{t_1}^{t_2} |x-t_1||t_2-x| \,\ud x \gtrsim |t_1 - t_2|^4,
\end{equation*}
where the last inequality may easily be deduced by removing a $|t_1-t_2|/8$-neighbourhood of the endpoints $\{t_1,t_2\}$ from the domain of integration. 
Consequently, by applying the change of variables formula, 
\begin{equation*}
|E| \gtrsim \int_{\Omega} |J_{\Phi}(r_1, t_1, t_2)| \,  \ud t_2 \ud r_1 \ud t_1  \gtrsim \int_{\Omega_1}\int_{\Omega_2(r_1, t_1)} |t_1 - t_2|^4 \,  \ud t_2 \ud r_1 \ud t_1.
\end{equation*}
For each $(t_1,r_1) \in \Omega_1$ define $\widetilde{\Omega}_2(r_1, t_1) := \Omega_2(r_1, t_1)\setminus (t_1 + c\alpha, t_1 - c\alpha)$ for a suitably small constant $c$, chosen so that $|\widetilde{\Omega}_2(r_1, t_1)| \gtrsim \alpha$. Hence, 
\begin{equation*}
|E| \gtrsim \int_{\Omega_1}\int_{\widetilde{\Omega}_2(r_1, t_1)} |t_1 - t_2|^4 \,  \ud t_2 \ud r_1 \ud t_1 \gtrsim \alpha^4 \int_{\Omega_1}\int_{\widetilde{\Omega}_2(r_1, t_1)} \,  \ud t_2 \ud r_1 \ud t_1 \gtrsim  \alpha^5 \beta,
\end{equation*}
and this concludes the proof of \eqref{momentequivweake} and thereby establishes Theorem \ref{momentthm} for $d=3$.

The remainder of the paper will develop this elementary argument in order to prove Proposition \ref{weakthm} in any dimension $d$ and for any polynomial curve $P$.

\section{The polynomial decomposition theorem of Dendrinos and Wright}\label{decomposition 1}

The refinement method essentially reduces the problem of establishing the restricted weak-type inequality \eqref{weak} from Proposition \ref{weakthm} to estimating a Jacobian determinant associated with a certain naturally arising change of variables. In the case of the moment curve this Jacobian takes a particularly simple form involving a Vandermonde polynomial $V(t)$. For a general polynomial curve $P \colon \R \to \R^d$ one is led to consider expressions of the form
\begin{equation}\label{JP}
J_P(t) := \det(P'(t_1) \dots P'(t_d)) 
\end{equation}
for $t = (t_1, \dots, t_d) \in \R^d$.\footnote{For the moment curve $h(t) := (t, t^2, \dots, t^d)$, one immediately observes that $J_h(t) = cV(t)$.} The multivariate polynomial $J_P$ can be effectively estimated by comparing it with the Vandermonde polynomial and a certain geometric quantity expressed in terms of the torsion function (whose definition is recalled below). This leads to what is referred to here (and in \cite{Dendrinos2010}) as a \emph{geometric inequality} for $J_P$. It is often the case that such a comparison is not possible globally; however, an important theorem due to Dendrinos and Wright \cite{Dendrinos2010} demonstrates the existence of a decomposition of the real line into a bounded number of intervals, $\R = \bigcup_{m=1}^C \overline{I_m}$, such that such a geometric inequality holds on each constituent interval $I_m$. Furthermore, the torsion function has a particularly simple form when restricted to an $I_m$: it is comparable to a centred monomial. Restricting the analysis to an interval arising from the Dendrinos-Wright decomposition therefore significantly simplifies the situation and allows for an effective estimation of the Jacobian $J_P$.

In order to state the decomposition lemma, recall the torsion of the curve $P$ is defined to be the polynomial function
\begin{equation*}
L_P(t) := \det(P^{(1)}(t) \dots P^{(d)}(t))
\end{equation*}
where $P^{(i)}$ denotes the $i$th derivative of $P$.

\begin{thm}[Dendrinos and Wright \cite{Dendrinos2010}]\label{Dendrinos Wright theorem} Let $P \colon \R \to \R^d$ be a polynomial curve of degree $n$ such that $L_P \not\equiv 0$. There exists an integer $C = C_{d,n}$ and a decomposition $\R = \bigcup_{m=1}^{C} \overline{I_m}$ where the $I_m$ are pairwise disjoint open intervals with the following properties:
\begin{enumerate}[1)]
\item Whenever $\mathbf{t} = (t_1, \dots, t_d) \in I_m^d$ the geometric inequality 
\begin{equation*}
|J_P(\mathbf{t})| \gtrsim \prod_{i=1}^d|L_P(t_i)|^{1/d} |V(\mathbf{t})|
\end{equation*}
holds.
\item For every $1 \leq m \leq C$ there exists a positive constant $D_{m}$, a non-negative integer $K_{m} \lesssim 1$ and a real number $b_{m} \in \R \setminus I_m$ such that
\begin{equation*}
|L_{P}(t)| \sim D_{m}|t - b_m|^{K_{m}} \qquad \textrm{for all $t \in I_m$.}
\end{equation*}
\end{enumerate}
\end{thm}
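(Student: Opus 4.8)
The plan is to separate the two assertions. Property (2) is a soft structural statement about a single polynomial, while property (1) --- the \emph{geometric inequality} --- carries the real content. I would construct a decomposition witnessing (2) first, and then check that, on its constituent intervals (after at most one further subdivision into boundedly many pieces), property (1) is automatic.

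\emph{Property (2).} Only two facts matter here: $L_P$ is a polynomial whose degree $N$ is bounded in terms of $d$ and $n$ (indeed $N \le dn$), and one wants to partition $\R$ into $O_{d,n}(1)$ intervals on each of which $L_P$ is comparable to a monomial centred off the interval. This reduces to the following claim, which I would prove by induction on $N$: any real polynomial $Q$ of degree $N$ admits a partition of $\R$ into $O_N(1)$ open intervals $I$ with $|Q(t)| \sim D\,|t - b|^K$ on $I$ for some $D > 0$, some integer $0 \le K \le N$, and some $b \in \R \setminus I$. The case $N = 0$ is trivial. For the inductive step, apply the claim to $Q'$, then refine each resulting interval $I$ by cutting at the (at most $N$) zeros of $Q$ lying in it and at the point where $|t - b'| = 2\,\mathrm{dist}(b', I)$; on each resulting piece $Q'$ has fixed sign, and a short computation --- using $Q(t) = Q(a) + \int_a^t Q'$ with $a$ an endpoint of the piece, together with either $|t - b'| \sim \mathrm{dist}(b', I)$ or $\big|\int_a^t Q'\big| \sim |t - b'|^{K'+1}$ --- shows, after at most one more cut, that $Q$ is comparable on it to a constant or to $|t - b'|^{K'+1}$. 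Each stage multiplies the interval count by $O_N(1)$ while lowering the degree, so finitely many intervals and exponents $K \le N$ result. I would run this simultaneously for $L_P$ and for a finite further list of determinants $\det\big(P^{(i_1)}(t)\,\cdots\,P^{(i_d)}(t)\big)$, still obtaining boundedly many intervals --- the extra monomial control is useful for (1).

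\emph{Property (1).} Fix an interval $I = I_m$ from the decomposition and let $b = b_m \notin I$. The mechanism is that the normalised determinant $J_P(\mathbf t)/V(\mathbf t)$ is a smoothing of $L_P$: iterating the column identity $P^{(j)}(t_i) - P^{(j)}(t_{i-1}) = \int_{t_{i-1}}^{t_i} P^{(j+1)}$ inside the determinant (a Hermite--Genocchi/B-spline type reduction) expresses
\begin{equation*}
J_P(\mathbf t) = V(\mathbf t)\Big( c_d \int L_P\,\ud\nu_{\mathbf t} \;+\; \text{lower-order terms}\Big),
\end{equation*}
where $\nu_{\mathbf t}$ is a probability measure on $[\min_i t_i, \max_i t_i]$ with density $O_d(1)$ from above and $\gtrsim_d 1$ on a sub-interval of proportional length adjacent to the node $t_{i_0}$ farthest from $b$, and the lower-order terms are built from the auxiliary determinants above. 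Since $\mathbf t \in I^d$ and $b \notin I$, the spanning interval lies in $I$, so by (2) every quantity occurring has fixed sign there and is comparable to a monomial in $|\,\cdot - b|$; the displayed average then dominates, the modulus can be moved inside, and restricting the $\nu_{\mathbf t}$-integral to the sub-interval where its density is bounded below --- on which $|s - b| \sim \max_i|t_i - b|$ --- yields
\begin{equation*}
|J_P(\mathbf t)| \gtrsim |V(\mathbf t)|\, D_m\big(\max_i|t_i - b|\big)^{K_m} \;\ge\; |V(\mathbf t)|\, D_m\Big(\prod_{i=1}^d|t_i - b|\Big)^{K_m/d} \;\sim\; |V(\mathbf t)|\prod_{i=1}^d |L_P(t_i)|^{1/d}.
\end{equation*}
The middle inequality is merely that the maximum dominates the geometric mean, and this is exactly what produces the exponent $1/d$.

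\emph{The main obstacle.} Everything above except the geometric inequality is routine. The hard points in (1) are: (a) establishing the integral representation with explicit, sign-definite control of the density of $\nu_{\mathbf t}$ near the extreme node, which requires tracking how the iterated column reductions redistribute mass on the spanning interval; and (b) handling the case in which the nodes $t_1, \dots, t_d$ split into several tight clusters, so that $V(\mathbf t)$ and the spanning interval degenerate at different scales --- then one must regroup the determinant cluster-by-cluster and run the estimate on each block, which is precisely where control of a sufficiently rich family of associated determinants on $I$ is needed. Modulo this combinatorial case analysis the theorem follows.
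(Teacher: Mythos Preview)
The paper does not prove this theorem; it is quoted from Dendrinos and Wright \cite{Dendrinos2010} and used as a black box, so there is no ``paper's own proof'' to compare against. What follows are remarks on your sketch relative to the actual argument in \cite{Dendrinos2010}.

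Your treatment of property (2) is sound and close in spirit to the standard decomposition arguments; the induction on $\deg Q$ via $Q'$ is a legitimate way to obtain the monomial comparability, and running it simultaneously for a bounded family of auxiliary determinants is harmless. This part would go through.

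For property (1), however, the proposal is a plan rather than a proof, and you say as much. Two concrete issues:
\begin{itemize}
\item The displayed representation $J_P(\mathbf t) = V(\mathbf t)\big(c_d\!\int L_P\,\ud\nu_{\mathbf t} + \text{lower order}\big)$ is not an identity one can simply quote. The Hermite--Genocchi / B-spline reduction applied column-by-column to $\det(P'(t_1),\dots,P'(t_d))$ produces an iterated integral of a \emph{mixed} determinant $\det(P^{(i_1)}(s_1),\dots,P^{(i_d)}(s_d))$ evaluated at different points $s_j$, not of $L_P$ at a single point. Collapsing this to an average of $L_P$ plus controllable remainders is exactly where the work lies, and your ``lower-order terms'' are not specified well enough to be estimated.
\item The clustering obstacle you flag in (b) is genuine and is precisely the difficulty the Dendrinos--Wright argument is designed to overcome. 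Their proof proceeds by induction on the dimension $d$: one writes $P$ in a normal form, peels off one coordinate, and reduces to a $(d-1)$-dimensional geometric inequality together with careful control of how the torsion factors through the reduction. The decomposition into intervals is dictated by this inductive process (roots of $L_P$ \emph{and} of a hierarchy of subdeterminants), not chosen in advance from $L_P$ alone. Your scheme --- fix the decomposition from (2) first, then hope (1) follows on each piece --- would need to reproduce that hierarchy, which is what your final paragraph concedes is missing.
\end{itemize}
In short: (2) is fine; (1) identifies the right mechanism (averaging $L_P$ over the span of the nodes and using $\max \ge$ geometric mean for the exponent $1/d$) but stops at the point where the real argument begins. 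The honest status is ``strategy with the key technical lemma unproved,'' which matches your own assessment.
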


Theorem \ref{Dendrinos Wright theorem} originally appeared in \cite{Dendrinos2010} where it was used to study Fourier restriction operators associated to polynomial curves (see \cite{Stovall} for further developments in this direction). Concurrently, Dendrinos, Laghi and Wright \cite{Dendrinos2009} applied the decomposition to establish uniform estimates for convolution with affine arc-length on polynomial curves in low dimensions; their results were subsequently extended to all dimensions by Stovall \cite{Stovall2010}. Many of the methods of this paper are based on those found in \cite{Dendrinos2009, Stovall2010}.

Fixing a polynomial $P$ for which $L_P \not\equiv 0$, to prove Proposition \ref{weakthm} it suffices to establish the analogous uniform restricted weak-type inequalities for the local operators
\begin{equation*}
A^c_P(x,r) := \int_I f(x - r P(t)) \, \lambda_P(t)\ud t
\end{equation*} 
where $I$ is any bounded interval. Furthermore, one may assume $I$ lies completely within one of the intervals $I_m$ produced by the decomposition (indeed, $A^c_P$ can always be expressed as a sum of a bounded number of operators of the same form for which this property holds). Observing the translation, reflection and scaling invariance of the problem, one may assume $D_{m} = 1$, $b_m = 0$ and $I \subset (0, \infty)$ with $|I| = 1$ without any loss of generality. Similar reductions were made in \cite{Stovall2010} where further details can be found. Notice under these hypotheses, $|L_P(t)| \sim t^K$ uniformly on $I$ for some non-negative integer $K \lesssim 1$.

Henceforth $A$ will denote the operator defined by
\begin{equation}\label{reduced operator}
Af(x, r) := \int_I f(x - r P(t)) \,\ud \mu_P(t)
\end{equation}
where $\mu_P$ is now the weighted measure $\ud \mu_P(t) := \lambda_P(t)\ud t$; $\lambda_P$ is redefined as $\lambda_P(t):= t^{2K/d(d+1)}$ and the integer $K$ and interval $I$ satisfy the above properties. It remains to prove the analogue of the restricted weak-type inequality \eqref{weak} from Proposition \ref{weakthm} for this operator.

To close this section it is remarked that Stovall \cite{Stovall2010} established an upper bound for certain derivatives of $J_P$ on the set $I^d$ in terms of $J_P$ itself. This estimate will be of use in the forthcoming analysis and is recorded presently for the reader's convenience.

\begin{prop}[Stovall \cite{Stovall2010}]\label{Stovall's observation} Let $S \subseteq \{1, \dots, d\}$ be a non-empty set of indices. Whenever $t = (t_1, \dots, t_d) \in I^d$, one has the estimate
\begin{equation*}
\bigg| \prod_{j \in S} \frac{\partial}{\partial t_j} J_P(t) \bigg| \lesssim \sum_{T \subseteq S} \sum_{u, \epsilon} \bigg(\prod_{{j} \in S\setminus T} t_{j}^{-1}\bigg)\bigg( \prod_{{j} \in T} t_{j}^{-\epsilon(j)}|t_{j} - t_{u({j})}|^{\epsilon(j) - 1}\bigg)|J_P(t)|
\end{equation*}
where the outer sum is over all subsets $T$ of $S$ and the inner sum is over all functions $u \colon T \to \{1, \dots, d\}$ with the property $u(j) \neq j$ for all $j \in T$ and all $\epsilon \colon T \to \{0,1\}$.  
\end{prop}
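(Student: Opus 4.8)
\emph{Proof proposal.} The plan is to prove the estimate by induction on $\# S$, with the inductive step being a routine application of the Leibniz rule, so that essentially all the work is contained in the case $\# S = 1$, that is, in the single-variable estimate
\begin{equation}
|\partial_{t_j} J_P(t)| \lesssim \Big( \frac{1}{t_j} + \sum_{k \neq j} \frac{1}{|t_j - t_k|} \Big) |J_P(t)| \qquad (t \in I^d). \tag{$\star$}
\end{equation}
Note that, up to constants, the right-hand side of $(\star)$ is exactly the $S = \{j\}$ instance of the asserted bound: the term $T = \emptyset$ contributes $t_j^{-1}|J_P|$, and the term $T = \{j\}$ contributes $\sum_{k \neq j}\bigl(|t_j - t_k|^{-1} + t_j^{-1}\bigr)|J_P|$.

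Granting $(\star)$, the induction runs as follows. Suppose $\prod_{j \in S} \partial_{t_j} J_P$ has already been written as a finite sum of terms $c(t)\,J_P(t)$, where each $c(t)$ is a product over $j \in S$ of factors, each equal either to $t_j^{-1}$ or to $|t_j - t_{u(j)}|^{-1}$ for some $u(j) \neq j$. Fix $j_0 \notin S$ and apply $\partial_{t_{j_0}}$ to such a term; the product rule gives $c(t)\,\partial_{t_{j_0}}J_P(t)$ together with $\bigl(\partial_{t_{j_0}}c(t)\bigr)J_P(t)$. To the first summand one applies $(\star)$, which appends a factor for the new index $j_0$ of precisely the permitted shape. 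In the second summand, since $j_0 \notin S$ the derivative $\partial_{t_{j_0}}$ annihilates every factor $t_j^{-1}$ of $c$ as well as every factor $|t_j - t_{u(j)}|^{-1}$ with $u(j) \neq j_0$; it acts only on factors $|t_j - t_{j_0}|^{-1}$, producing (up to sign) $|t_j - t_{j_0}|^{-2}$, which one splits as $|t_j - t_{u(j)}|^{-1}$ — the unchanged factor for $j$, with $u(j)=j_0$ — times a new factor $|t_{j_0} - t_j|^{-1}$ for $j_0$, with $u(j_0) := j \neq j_0$. In every case the outcome is again of the required form for the index set $S \cup \{j_0\}$; iterating over the elements of $S$ completes the induction, and the elaborate sum over $T$, $u$ and $\epsilon$ in the statement is simply the bookkeeping of these cases.

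It remains to establish $(\star)$. Since transposing two of the arguments $t_i$ changes the sign of $J_P$, the Vandermonde $V(t)$ divides $J_P(t)$; write $J_P(t) = V(t)R(t)$ with $R$ a polynomial of bounded degree in each variable. Using $\partial_{t_j}\log|V(t)| = \sum_{k \neq j}(t_j - t_k)^{-1}$ one finds $\partial_{t_j}J_P = \bigl(\sum_{k\neq j}(t_j - t_k)^{-1}\bigr)VR + V\,\partial_{t_j}R$, so $(\star)$ reduces to $|\partial_{t_j}R(t)| \lesssim t_j^{-1}|R(t)|$ on $I^d$. Here the normalisations of Section~\ref{decomposition 1} are used decisively: the geometric inequality of Theorem~\ref{Dendrinos Wright theorem} gives $|R(t)| = |J_P(t)|/|V(t)| \gtrsim \prod_{i=1}^d t_i^{K/d} > 0$ throughout $I^d$, so $R$ is non-vanishing there; together with the fact that $R$ is a polynomial of controlled degree, a Bernstein–Markov inequality for one-variable polynomials — applied to $s \mapsto R(t_1,\dots,s,\dots,t_d)$ on a subinterval of $I$ of length comparable to $t_j$ and centred (as nearly as the geometry permits) at $t_j$ — yields $|\partial_{t_j}R(t)| \lesssim t_j^{-1}\sup\{\,|R(t_1,\dots,s,\dots,t_d)| : s \sim t_j\,\} \lesssim t_j^{-1}|R(t)|$, the last step invoking once more the two-sided control on $R$ provided by (the proof of) Theorem~\ref{Dendrinos Wright theorem}. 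Substituting back gives $(\star)$.

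The main obstacle is exactly this last point: obtaining the single-derivative estimate with the \emph{sharp} weight $t_j^{-1}$ — the strongest weight appearing anywhere in the proposition — rather than a cruder one. This is where the polynomial nature of $J_P$ alone does not suffice and one must exploit the Dendrinos–Wright normalisation $|L_P(t)| \sim t^K$ on $I$, which is precisely what prevents the reduced polynomial $R$ from degenerating on $I^d$ except, possibly, as $t_j \to 0$ — and that is the one degeneracy the weight $t_j^{-1}$ is designed to absorb. The combinatorial induction above, by contrast, is entirely mechanical once $(\star)$ is in hand.
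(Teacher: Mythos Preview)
First, note that the paper does not give its own proof of this proposition: it is quoted from Stovall \cite{Stovall2010} ``for the reader's convenience'' and used as a black box. So there is no proof in the paper to compare against; I can only comment on whether your argument stands on its own and on how it relates to Stovall's original approach.

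Your induction on $\#S$ is fine and is indeed pure bookkeeping. The substance is, as you say, the single-derivative estimate $(\star)$, and within that the claim $|\partial_{t_j}R(t)|\lesssim t_j^{-1}|R(t)|$ on $I^d$. Here there is a genuine gap. The reductions in Section~\ref{decomposition 1} only guarantee $I\subset(0,\infty)$ with $|I|=1$ and $b_m=0$; nothing prevents $I=(a,a+1)$ with $a$ large. In that regime $t_j\geq a\gg 1=|I|$, so there is no subinterval of $I$ of length comparable to $t_j$, and your Bernstein--Markov step collapses to the trivial bound $|\partial_{t_j}R|\lesssim \sup_{s\in I}|R(\dots,s,\dots)|\lesssim |R(t)|$, which is weaker than the required $t_j^{-1}|R(t)|$ by a factor of $a$. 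The two-sided size control $|R(t)|\sim\prod_i t_i^{K/d}$ that you invoke does not by itself rescue this: a bounded-degree polynomial in $s$ that is comparable to a constant on an interval of length $1$ can still have derivative of order $1$ there (for instance $s\mapsto 1+(s-a-\tfrac12)^2$), so comparability of \emph{values} does not transfer to comparability of \emph{logarithmic derivatives}.

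What is actually needed, and what Stovall's proof uses, is the finer output of the Dendrinos--Wright decomposition, not merely the geometric inequality recorded in Theorem~\ref{Dendrinos Wright theorem}. On each $I_m$ the entries of the matrix defining $J_P$ admit a factorisation whose roots (and the roots of the associated minors) lie at distance $\gtrsim |t-b_m|=t$ from any $t\in I$; equivalently, the logarithmic derivative $\partial_{t_j}\log|R|$ is a sum of $O(1)$ terms each of size $O(t_j^{-1})$. This structural information is what produces the sharp weight $t_j^{-1}$ and cannot be recovered from a Bernstein--Markov argument applied to the size bound alone. So your outline is correct in spirit---reduce to a single-derivative logarithmic bound and then induct---but the analytic input you cite for that bound is too weak; you need to go back into the proof of the decomposition theorem rather than use only its stated conclusion.
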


\section{Parameter towers}

Having made the reductions of the previous section, fix Borel sets $E \subseteq \R^d$ and $F \subseteq \R^d \times [1,2]$ of finite Lebesgue measure such that $\langle A \chi_E\,,\, \chi_F\rangle \neq 0$ where $A$ is of the special form described in \eqref{reduced operator}. As in Section \ref{overview of refinement method}, the quantities
\begin{equation*}
\alpha := \frac{1}{|F|} \langle A\chi_E, \chi_F \rangle \quad \textrm{and} \quad \beta := \frac{1}{|E|} \langle \chi_E, A^*\chi_F \rangle
\end{equation*}
play a dominant r\^ole in the analysis. Indeed, by some simple algebra the inequality \eqref{weak} can be restated in terms of $\alpha$ and $\beta$ as either
\begin{equation}\label{equivweake}
|E| \gtrsim \alpha^{d(d+1)/2} (\beta/\alpha)^{(d-1)/2}
\end{equation}
or
\begin{equation}\label{equivweakf}
|F| \gtrsim \alpha^{d(d+1)/2} (\beta/\alpha)^{(d+1)/2}.
\end{equation}
The proof will proceed by attempting to establish either one of these estimates by applying a variant of the refinement procedure described earlier. In view of the $L^{p_2}_x - L^{q_2}_{x,r}$ estimate established in Section \ref{overview of refinement method}, henceforth it is assumed without loss of generality that $\alpha > \beta$. Indeed, the restricted weak-type $(p_2, q_2)$ inequality implies
\begin{equation*}
|E| \gtrsim \alpha^{d(d+1)/2} (\beta/\alpha)^{d}
\end{equation*}
from which \eqref{equivweake} follows in the case $\alpha \leq \beta$.

As in Section \ref{overview of refinement method}, either \eqref{equivweake} or \eqref{equivweakf} will be established by constructing suitable parameter domain $\Omega$ and parametrising function $\Phi$ where $\Omega$ is some structured set. In this section the basic structure of such a domain $\Omega$ is described. 

Consider a collection $\{\Omega_j\}_{j=1}^D$ of Borel measurable sets either of the form\footnote{Here $\ceil{x} := \min\{ n \in \N : n \geq x\}$ and $\floor{x} := \max\{ n \in \N : n \leq x\}$ for any $x \in \R$.}
\begin{equation}\label{floortower}
\Omega_j \subseteq [1,2]^{\floor{j/2}} \times I^j \qquad \textrm{for $j = 1, \dots, D$}
\end{equation}
or 
\begin{equation}\label{ceiltower}
\Omega_j \subseteq [1,2]^{\ceil{j/2}} \times I^j \qquad \textrm{for $j = 1, \dots, D$}.
\end{equation}
In order to be concise it is useful to let $\bracket{x}$ ambiguously denote either $\ceil{x}$ or $\floor{x}$ for any $x \in \R$, where it is understood the notation is consistent within any given equation. Thus \eqref{floortower} and \eqref{ceiltower} are considered simultaneously by writing
\begin{equation*}
\Omega_j \subseteq [1,2]^{\bracket{j/2}} \times I^j \qquad \textrm{for $j = 1, \dots, D$.}
\end{equation*}
Assume each $\Omega_j$ has positive $(j + \bracket{j/2})$-dimensional measure. The following definitions, which borrow terminology from \cite{Christ, Christa}, are fundamental in what follows.

\begin{dfn} 
\begin{enumerate}[i)]
\item A collection $\{\Omega_j\}_{j=1}^D$ of the above form is a (parameter) tower of height $D \in \N$ if for any $1 < j \leq D$ and $r_1, \dots, r_{\bracket{j/2}} \in [1,2]$ and $t_1, \dots, t_j \in I$ the following holds:
\begin{equation*}
(\mathbf{r}_j, \mathbf{t}_j) \in \Omega_j \Rightarrow (\mathbf{r}_{j-1}, \mathbf{t}_{j-1}) \in \Omega_{j-1}
\end{equation*}
where $\mathbf{r}_k := (r_1, \dots, r_{\bracket{k/2}})$ and $\mathbf{t}_k := (t_1, \dots, t_k)$ for $k = j-1, j$. 
\item If a tower is described as ``type 1'' (respectively, ``type 2'') this indicates the constituent sets are of the form described in \eqref{floortower} (respectively, \eqref{ceiltower}). Thus, when considering type 1 (respectively, type 2) towers the symbol $\bracket{x}$ is interpreted as $\floor{x}$ (respectively, $\ceil{x}$) for any $x \in \R$. 

\item Given a type 1 (respectively, type 2) tower $\{\Omega_j\}_{j=1}^D$, fix $1< j \leq D$. For each $(\mathbf{r}_{j-1}, \mathbf{t}_{j-1}) \in \Omega_{j-1}$ define the associated fibre $\Omega_j(\mathbf{r}_{j-1}, \mathbf{t}_{j-1})$ to be the set
\begin{equation*}
\left\{\begin{array}{ll}
\big\{ t_j \in I : (\mathbf{r}_j, \mathbf{t}_j) \in \Omega_j \big\} 
 & \textrm{if $j$ is odd (respectively even)}\\
&\\
\left\{ (r_{\bracket{j/2}}, t_j) \in [1,2] \times I : (\mathbf{r}_j, \mathbf{t}_j) \in \Omega_j \right\} & \textrm{if $j$ is even (respectively odd).}
\end{array}\right.
\end{equation*}
\end{enumerate}
\end{dfn}

\begin{rmk} A type $j$ tower is characterised by the property that the initial set $\Omega_1$ is $j$-dimensional, for $j=1,2$. 
\end{rmk}

For example, the collection $\{\Omega_j\}_{j=1}^2$ defined in Section \ref{overview of refinement method} constitutes a type 2 tower. In what follows, type 1 towers will be of primary interest. The elements of the various levels of a type 1 tower are typically denoted using the following notation:
\begin{equation*}
\mathbf{t}_1 = t_1 \in \Omega_1, \quad (\mathbf{r}_2, \mathbf{t}_2) = (r_1, t_1,t_2) \in \Omega_2, \quad (\mathbf{r}_3, \mathbf{t}_3) = (r_1, t_1, t_2, t_3) \in \Omega_3,
\end{equation*}
\begin{equation*}
  (\mathbf{r}_4, \mathbf{t}_4) = (r_1, r_2, t_1, t_2, t_3, t_4) \in \Omega_4,\quad (\mathbf{r}_5, \mathbf{t}_5) = (r_1, r_2, t_1, t_2, t_3, t_4, t_5) \in \Omega_5, \dots .
\end{equation*}

Recall that certain mappings $\Phi_1$ and $\Phi_2$, defined in \eqref{moment Phi1} and \eqref{moment Phi2}, were associated to the tower constructed in Section \ref{overview of refinement method}. Presently the analogues of these mappings in the general situation are discussed. First of all one associates to every $(x_0, r_0) \in \R^d \times [1,2]$ and $y_0 \in \R^d$ a family of functions.
\begin{enumerate}[i)]
\item Given $(x_0, r_0) \in \R^d \times [1,2]$ define the functions $\Psi_j(x_0, r_0;\,\cdot\,) : [1,2]^{\floor{j/2}} \times I^j \to \R^d$  by
\begin{equation}\label{parf}
\Psi_j(x_0, r_0;\mathbf{r}_j, \mathbf{t}_j) = x_0 + \sum_{k = 1}^j (-1)^{k} r_{\floor{k/2}} P(t_k).
\end{equation}
for all $\mathbf{r}_j = (r_1, \dots, r_{\floor{j/2}}) \in [1,2]^{\floor{j/2}}$ and $\mathbf{t}_j = (t_1, \dots, t_j) \in I^j$.
\item Given $y_0 \in \R^d$ define the functions $\Psi_j(y_0;\,\cdot\,) : [1,2]^{\ceil{j/2}} \times I^j \to \R^d$ by
\begin{equation}\label{pare}
\Psi_j(y_0; \mathbf{r}_j, \mathbf{t}_j) = y_0 + \sum_{k = 1}^j (-1)^{k+1} r_{\ceil{k/2}} P(t_k)
\end{equation}
for all $\mathbf{r}_j = (r_1, \dots, r_{\ceil{j/2}}) \in [1,2]^{\ceil{j/2}}$ and $\mathbf{t}_j = (t_1, \dots, t_j) \in I^j$. 
\end{enumerate}

To any tower one associates a family of mappings on the constituent sets, defined in terms of the $\Psi_j$ functions.

\begin{dfn}\label{associated mappings} Suppose $\{\Omega_j\}_{j=1}^D$ is a type 1 (respectively, type 2) tower and fix some $z_0 = (x_0, r_0) \in \R^d \times [1,2]$ (respectively, $z_0 = y_0 \in \R^d$). The family of mappings $\{\Phi_j\}_{j=1}^D$ associated to these objects is defined as follows:
\begin{enumerate}[i)]
\item For $1 \leq j \leq D$ odd (respectively, even) let $\Phi_j : \Omega_j \rightarrow \R^d$ denote the map
\begin{equation*}
\Phi_j(\mathbf{r}_j, \mathbf{t}_j) := \Psi_j(z_0; \mathbf{r}_j, \mathbf{t}_j).
\end{equation*}
\item For $1 \leq j \leq D$ even (respectively, odd) let $\Phi_j : \Omega_j \rightarrow \R^d \times [1,2]$ denote the map
\begin{equation*}
\Phi_j(\mathbf{r}_j, \mathbf{t}_j) := \left( \begin{array}{c}
\Psi_k(z_0, \mathbf{r}_j, \mathbf{t}_j)\\
r_{\bracket{j/2}} \end{array}\right).
\end{equation*}
\end{enumerate}
\end{dfn}

Referring back to the simple case discussed earlier, (appropriate restrictions of) the functions defined in \eqref{moment Phi1} and \eqref{moment Phi2} are easily seen to constitute the family associated to the point $y_0$ and tower $\{\Omega_j\}_{j=1}^2$ constructed in Section \ref{overview of refinement method}. 

For notational convenience define the following quantity
\begin{equation}\label{kappa}
\kappa := \frac{d(d+1)}{2K + d(d+1)}.
\end{equation}
Recalling the definition of $\mu_P$ from \eqref{reduced operator}, it is also useful to let $\nu_P$ denote the measure given by the product of Lebesgue measure on $[1,2]$ with $\mu_P$. Hence, for any Borel set $R \subseteq [1,2] \times I$,
\begin{equation*}
\nu_P(R) = \int_1^2 \int_I \chi_R(r,t)\,\lambda_P(t)\ud t \ud r.
\end{equation*}
Initially the following lemma is used to construct a suitable parameter tower.

\begin{lem}\label{dendrinosstovall} There exists a point $(x_0, r_0) \in F$ and a type 1 tower $\{\Omega_j\}_{j = 1}^{d+1}$ with the following properties:
\begin{enumerate}[1)]
\item Whenever $(\mathbf{r}_j, \mathbf{t}_j) \in \Omega_j$ it follows that
\begin{equation*}
\alpha^{\kappa} = \max\{\alpha, \beta\}^{\kappa} \lesssim t_1 < t_2 < \dots < t_j.
\end{equation*}
\item For $1 \leq j \leq d+1$ odd:
\begin{enumerate}[i)]
\item $\Phi_j(\Omega_j) \subseteq E$;
\item $\mu_P(\Omega_1) \gtrsim \alpha$ and if $j > 1$, then $\mu_P\left(\Omega_j(\mathbf{r}_{j-1}, \mathbf{t}_{j-1})\right) \gtrsim \alpha$ whenever $(\mathbf{r}_{j-1}, \mathbf{t}_{j-1}) \in \Omega_{j-1}$;
\item If $j > 1$ and $(\mathbf{r}_j, \mathbf{t}_j) \in \Omega_j$, then 
\begin{equation*}
\int_{t_{j-1}}^{t_j} \,\lambda_P(t)\ud t \gtrsim \alpha.
\end{equation*}
\end{enumerate}
\item  For $1 < j \leq d+1$ even:
\begin{enumerate}[i)]
\item $\Phi_j(\Omega_j) \subseteq F$;
\item $\nu_P \left(\Omega_j(\mathbf{r}_{j-1}, \mathbf{t}_{j-1})\right) \gtrsim \beta$ whenever $(\mathbf{r}_{j-1}, \mathbf{t}_{j-1}) \in \Omega_{j-1}$;
\item If $(\mathbf{r}_j, \mathbf{t}_j) \in \Omega_j$, then 
\begin{equation*}
\int_{t_{j-1}}^{t_j} \,\lambda_P(t)\ud t \gtrsim \beta.
\end{equation*}
\end{enumerate}
\end{enumerate}
\end{lem}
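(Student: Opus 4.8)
The plan is to build the tower by iteratively ``refining'' $E$ and $F$ in the manner of Christ's method, exactly as in the $d=3$ warm-up of Section \ref{overview of refinement method}, but with two modifications: (i) a popularity argument is needed at each step so that the fibres on which the next set is defined are large in the \emph{correct} measure ($\mu_P$ when defining a subset of $E$, the product measure $\nu_P$ when defining a subset of $F$), and (ii) the lower bound $t_1 \gtrsim \alpha^{\kappa}$ in property 1) must be engineered into the construction of $\Omega_1$ by first discarding the part of $E$ that is hit only through very small values of $t$. I would begin by fixing the standard ``doubly refined'' sets
\begin{equation*}
F_1 := \{ (x,r) \in F : A\chi_E(x,r) > \alpha/2 \}, \qquad E_1 := \{ y \in E : A^*\chi_{F_1}(y) > \beta/4 \},
\end{equation*}
noting as in Section \ref{overview of refinement method} that $\langle A\chi_{E_1}, \chi_{F_1}\rangle \neq 0$, so $E_1 \neq \emptyset$; a pigeonholing shows $A\chi_{E_1} > \alpha/4$ on a positive-measure subset $F_2 \subseteq F_1$, and $A^*\chi_{F_2} > \beta/8$ on a positive-measure subset of $E_1$. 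Fix a point $(x_0,r_0)$ in this refined copy of $F$.

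Next I would address property 1). Since $\lambda_P(t) = t^{2K/d(d+1)}$ on $I \subset (0,\infty)$, the $\mu_P$-mass of $I \cap (0, s)$ is $O(s^{1 + 2K/d(d+1)}) = O(s^{1/\kappa})$ by the definition \eqref{kappa} of $\kappa$. Hence for a suitable small constant $c$, the contribution to $A\chi_{E_1}(x_0,r_0)$ coming from $t \in (0, c\alpha^{\kappa})$ is $O((c\alpha^{\kappa})^{1/\kappa}) = O(c\alpha) < \alpha/8$, so after removing these $t$ there is still $\mu_P$-mass $\gtrsim \alpha$ of parameters $t \in I$ with $t \gtrsim \alpha^{\kappa}$ and $x_0 - r_0 P(t) \in E_1$ (using $\max\{\alpha,\beta\}=\alpha$, as we have reduced to $\alpha > \beta$). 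This set, intersected with $E$ pulled back through $\Psi_1(x_0,r_0;\,\cdot\,)$, is $\Omega_1$; by construction $\Phi_1(\Omega_1) \subseteq E_1 \subseteq E$, $\mu_P(\Omega_1) \gtrsim \alpha$, and $t_1 \gtrsim \alpha^{\kappa}$.

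The inductive step is then a routine alternation. Given $\Omega_{j-1}$ with $\Phi_{j-1}(\Omega_{j-1})$ contained in the appropriate refined set, for each $(\mathbf{r}_{j-1},\mathbf{t}_{j-1}) \in \Omega_{j-1}$ one forms the fibre over which the next integral is $\gtrsim \alpha$ (respectively $\gtrsim \beta$); for $j$ odd this fibre is $\{ t_j \in I : \Psi_j(z_0;\mathbf{r}_j,\mathbf{t}_j) \in E_{\mathrm{refined}}\}$ with $\mu_P$-mass $\gtrsim \alpha$, and for $j$ even it is $\{(r_{\lceil j/2\rceil},t_j) : \Phi_j(\mathbf{r}_j,\mathbf{t}_j) \in F_{\mathrm{refined}}\}$ with $\nu_P$-mass $\gtrsim \beta$, these lower bounds being precisely the defining inequalities of the relevant refined set. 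To also obtain the monotonicity $t_{j-1} < t_j$ and the gap condition 2)(iii)/3)(iii), I would note that the $\mu_P$-mass of $I \cap (0, t_{j-1})$ is $O(t_{j-1}^{1/\kappa})$ while the whole fibre has $\mu_P$-mass $\gtrsim \alpha$; comparing with the lower bound $t_{j-1} \geq t_1 \gtrsim \alpha^{\kappa}$ is \emph{not} by itself enough to force $t_j > t_{j-1}$ for \emph{every} point, so instead one restricts each fibre to its intersection with $\{t_j > t_{j-1}\}$, absorbs the $O(t_{j-1}^{1/\kappa}) = O(\alpha^{\cdot})$ error, and further removes an $O(\alpha)$-mass (resp. $O(\beta)$-mass) neighbourhood $\{|t_j - t_{j-1}| < c\alpha\}$ (resp. $\{\int_{t_{j-1}}^{t_j}\lambda_P < c\beta\}$, i.e. an interval of $\mu_P$-mass $c\beta$ to the right of $t_{j-1}$) — since $\alpha>\beta$ both errors are controlled — leaving a fibre of the same order and satisfying all three conditions. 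Finally $\Omega_j$ is defined as the union of these trimmed fibres over $\Omega_{j-1}$, and it is Borel measurable and of positive measure by Fubini.

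\textbf{Main obstacle.} The delicate point, and the one I would spend the most care on, is simultaneously maintaining \emph{all} of properties 1), 2), 3) through the iteration while only ever losing constant factors: each trimming must remove mass of strictly smaller order than the fibre's total mass, which forces the bookkeeping that $t_{j-1} \gtrsim \alpha^{\kappa}$ together with $\kappa \le 1$ makes $t_{j-1}^{1/\kappa} \lesssim t_{j-1} \cdot \alpha^{(1-\kappa)/\kappa} \lesssim \alpha$ only after verifying the constants compound harmlessly over the fixed number $d+1$ of steps, and that the two different measures $\mu_P$ and $\nu_P$ interact correctly at the even/odd interface (the extra $[1,2]$-factor in $\nu_P$ contributes an $O(1)$ factor and no loss). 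The appearance of $\kappa$ rather than $1$ in property 1) is exactly the mechanism by which the torsion weight $\lambda_P$ is handled uniformly, and getting that exponent right is the crux of why this lemma — rather than a verbatim repeat of the $d=3$ argument — is needed.
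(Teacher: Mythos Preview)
Your proposal has a genuine gap at the monotonicity step. You correctly recognise that the naive Christ refinement does not force $t_j > t_{j-1}$, and you propose to fix this by restricting each fibre to $\{t_j > t_{j-1}\}$ and absorbing the loss as an $O(t_{j-1}^{1/\kappa})$ error. But your bound $t_{j-1}^{1/\kappa} \lesssim \alpha$ is false: the hypothesis $t_{j-1} \gtrsim \alpha^{\kappa}$ is a \emph{lower} bound and yields $t_{j-1}^{1/\kappa} \gtrsim \alpha$, not $\lesssim \alpha$ (your chain $t_{j-1}^{1/\kappa} \lesssim t_{j-1}\cdot \alpha^{(1-\kappa)/\kappa}$ would require $t_{j-1} \lesssim \alpha$, which you do not have). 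Concretely, $t_{j-1}$ may lie near the right endpoint $b$ of $I$, in which case $\mu_P(I \cap (0,t_{j-1}))$ is essentially all of $\mu_P(I)$ and certainly exceeds the fibre mass; there is simply no a priori reason the set $\{t_j : \Phi_j(\cdot) \in E_{\mathrm{refined}}\}$ should carry any mass to the right of $t_{j-1}$.

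The paper circumvents this by a genuinely different device, due to Dendrinos and Stovall. Rather than refining $E$ and $F$ separately, one works in the incidence set $U := \{(x,r,t) : (x,r)\in F,\ x-rP(t)\in E\} \subset \Sigma$ and constructs a decreasing chain $U_0 \supset U_1 \supset \cdots \supset U_{d+1}$ of comparable weighted measure such that for every $(x,r,t)\in U_k$ the \emph{forward} fibre integral $\int_t^b \chi_{U_{k-1}}(\ldots)\lambda_P(\tau)\,\ud\tau$ (along the $\pi_1$- or $\pi_2$-fibre, according to the parity of $k$) is $\gtrsim \alpha$ or $\gtrsim \beta$. The trimming uses the elementary identity that for any suitable measure $\nu$ and set $K$, $\nu(\{t\in K : \nu(K\cap(t,\infty))\le u\}) = u$; this costs exactly $u$, independent of the position of $t$, which is precisely what your argument lacks. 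One then fixes $(x_0,r_0,t_0)\in U_{d+1}$ and builds the tower outward: the forward condition guarantees mass strictly to the right of $t_{j-1}$, and one removes a prescribed $\mu_P$-interval $(t_{j-1},s_{j-1})$ of mass $\sim \alpha$ (resp.\ $\sim\beta$) to obtain 2)(iii)/3)(iii). Your treatment of property 1) (discarding $t < c\alpha^{\kappa}$) is correct and matches the construction of $U_0$.
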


Lemma \ref{dendrinosstovall} is a slight modification of a recent result due to Dendrinos and Stovall \cite{Dendrinos}, based on a fundamental construction due to Christ \cite{Christ1998}. Rather than present a proof of Lemma \ref{dendrinosstovall} a stronger statement, Lemma \ref{towerlem}, is established below.

To conclude this section it is noted that a tower admitting all the properties described in the previous lemma automatically satisfies a certain separation condition. This observation was also used in \cite{Dendrinos}.

\begin{cor}\label{separation} Let $\{\Omega_j\}_{j=1}^{d+1}$ be a tower with all the properties described in Lemma \ref{dendrinosstovall}.
\begin{enumerate}[i)]
\item Suppose $1 < j \leq d+1$ is odd. Then, for all $(\mathbf{r}_j, \mathbf{t}_j) \in \Omega_j$ it follows that
\begin{equation*}
t_j - t_i \gtrsim \alpha t_i^{-2K/d(d+1)} \qquad \textrm{for $1 \leq i \leq j-1$.}
\end{equation*}
\item Suppose $1 < j \leq d+1$ is even. Then, for all $(\mathbf{r}_j, \mathbf{t}_j) \in \Omega_j$ it follows that
\begin{equation*}
t_j - t_{j-1} \gtrsim \beta t_{j-1}^{-2K/d(d+1)}; \qquad
t_j - t_{i} \gtrsim \alpha t_i^{-2K/d(d+1)} \quad \textrm{for $1 \leq i \leq j-2$.}
\end{equation*}
\end{enumerate}
\end{cor}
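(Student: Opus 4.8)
The plan is to read off both parts of the corollary from a single dichotomy argument fed by the ``consecutive-gap'' lower bounds in parts 2)(iii) and 3)(iii) of Lemma~\ref{dendrinosstovall} together with the a priori bound $t_1 \gtrsim \alpha^{\kappa}$ from part~1). The starting observation is that, since $\lambda_P(t) = t^{2K/d(d+1)}$ is non-decreasing on $I \subset (0,\infty)$, one has $\int_s^u \lambda_P(t)\,\ud t \le (u-s)\lambda_P(u)$ for $s < u$ in $I$, and that each inequality to be proved, say $t_j - t_i \gtrsim \alpha\, t_i^{-2K/d(d+1)}$, is exactly the assertion $(t_j - t_i)\lambda_P(t_i) \gtrsim \alpha$ (and similarly with $\beta$ in place of $\alpha$).

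Fix $(\mathbf{r}_j, \mathbf{t}_j) \in \Omega_j$; by the tower property all truncations $(\mathbf{r}_k, \mathbf{t}_k)$ lie in $\Omega_k$, and by part~1) the coordinates obey $0 < \alpha^{\kappa} \lesssim t_1 < \dots < t_j$. The first step is to exhibit, inside $[t_i, t_j]$, a consecutive gap of $\mu_P$-mass $\gtrsim \alpha$ (resp.\ $\gtrsim \beta$): in case (i) part 2)(iii) gives $\int_{t_{j-1}}^{t_j}\lambda_P(t)\,\ud t \gtrsim \alpha$ with $[t_{j-1},t_j]\subseteq[t_i,t_j]$; in case (ii) the estimate $t_j - t_{j-1} \gtrsim \beta\, t_{j-1}^{-2K/d(d+1)}$ uses $\int_{t_{j-1}}^{t_j}\lambda_P(t)\,\ud t \gtrsim \beta$ from part 3)(iii), while for $1 \le i \le j-2$ (only possible when $j \ge 4$, the level $j-1$ then being odd and $>1$) one applies part 2)(iii) at level $j-1$ to get $\int_{t_{j-2}}^{t_{j-1}}\lambda_P(t)\,\ud t \gtrsim \alpha$ with $[t_{j-2},t_{j-1}]\subseteq[t_i,t_j]$. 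In every case this yields $\int_{t_i}^{t_j}\lambda_P(t)\,\ud t \gtrsim \alpha$ (resp.\ $\gtrsim \beta$).

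The second step is the dichotomy on the ratio $t_j/t_i$ (resp.\ $t_j/t_{j-1}$). If $t_j \le 2t_i$, then $\lambda_P(t_j) \le 2^{2K/d(d+1)}\lambda_P(t_i) \lesssim \lambda_P(t_i)$ since $K \lesssim 1$, whence $\alpha \lesssim \int_{t_i}^{t_j}\lambda_P(t)\,\ud t \le (t_j - t_i)\lambda_P(t_j) \lesssim (t_j - t_i)\lambda_P(t_i)$, which is the claim. If instead $t_j > 2t_i$, then $t_j - t_i > t_j/2 > t_i$, so $(t_j - t_i)\lambda_P(t_i) > t_i\cdot t_i^{2K/d(d+1)} = t_i^{1 + 2K/d(d+1)} \gtrsim (\alpha^{\kappa})^{1 + 2K/d(d+1)} = \alpha$, where the last equality is the identity $\kappa\bigl(1 + \tfrac{2K}{d(d+1)}\bigr) = 1$, immediate from \eqref{kappa}, and the previous step uses $t_i \ge t_1 \gtrsim \alpha^{\kappa}$. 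The $\beta$-versions are identical: replace $\alpha$ by $\beta$ in the small-ratio case, and in the large-ratio case note that the bound $\gtrsim \alpha$ already obtained dominates $\beta$ since $\alpha > \beta$.

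The computation is routine. The only point that is not automatic is the ``spread-out'' alternative $t_j > 2t_i$, in which $\lambda_P(t_j)$ and $\lambda_P(t_i)$ are genuinely incomparable and one cannot argue by near-constancy of $\lambda_P$ on $[t_i,t_j]$; there one must instead invoke the lower bound $t_1 \gtrsim \alpha^{\kappa}$ from Lemma~\ref{dendrinosstovall}(1) together with the exact balance of exponents built into the definition \eqref{kappa} of $\kappa$.
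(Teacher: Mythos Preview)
Your proof is correct. The paper uses the same two ingredients --- the consecutive-gap integral bounds from parts 2)(iii) and 3)(iii) and the lower bound $t_i \gtrsim \alpha^{\kappa}$ from part~1), together with the identity $\kappa(1 + 2K/d(d+1)) = 1$ --- but organizes them differently, bypassing your dichotomy on $t_j/t_i$. Rather than splitting into cases, the paper picks an intermediate point $s_i \in (t_i, t_j)$ with $\int_{t_i}^{s_i}\lambda_P \sim \alpha$ (possible since $\int_{t_i}^{t_j}\lambda_P \gtrsim \alpha$), then computes $s_i^{1/\kappa} \sim \int_0^{s_i}\lambda_P = t_i^{1/\kappa} + \int_{t_i}^{s_i}\lambda_P \sim t_i^{1/\kappa} + \alpha \sim t_i^{1/\kappa}$ (the last step via $t_i \gtrsim \alpha^{\kappa}$), so that $s_i \sim t_i$; finally $t_j - t_i \ge s_i - t_i \gtrsim \alpha/\lambda_P(t_i)$ since $\lambda_P$ is essentially constant on $[t_i, s_i]$. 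The auxiliary point $s_i$ absorbs both branches of your dichotomy at once: it agrees with $t_j$ up to constants in your small-ratio case and serves as a truncation in your large-ratio case. Your version is perhaps more transparent about where each hypothesis is used; the paper's is slightly more uniform and does not need to invoke $\alpha > \beta$ for the $\beta$-bound.
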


\begin{proof} Let $1 < j \leq n+1$ be either odd or even and $1 \leq i \leq j-1$. If $j$ is even, then further suppose $i\leq j-2$. For $(\mathbf{r}_j, \mathbf{t}_j) \in \Omega_j$, properties 1) and 2) iii) of the construction ensure there exists some $t_{i} < s_{i} < t_{j}$ for which
\begin{equation*}
\int_{t_{i}}^{s_{i}} \lambda_P(t)\,\ud t \sim \alpha.
\end{equation*}
Consequently,
\begin{equation*}
s_{i}^{1/\kappa} \sim \int_0^{s_{i}} \lambda_P(t)\,\ud t = \int_0^{t_{i}} \lambda_P(t)\,\ud t + \int_{t_{i}}^{s_{i}} \lambda_P(t)\,\ud t \sim t_{i}^{1/\kappa} + \alpha
\end{equation*}
and, since $\alpha \lesssim t_{i}^{1/\kappa}$ holds by property 1), one concludes that $s_{i} \sim t_{i}$. Whence,
\begin{equation*}
|t_j - t_i| \geq |s_{i} - t_{i}| \gtrsim  \bigg(\int_{t_{i}}^{s_{i}} \lambda_P(t)\,\ud t\bigg)t_{i}^{-2K/d(d+1)} \gtrsim \alpha t_{i}^{-2K/d(d+1)}.
\end{equation*}
The remaining case when $j$ is even and $i = j-1$ can be dealt with in a similar fashion, applying property 3) iii).
\end{proof}

\section{Improved parameter towers}

The properties detailed in Lemma \ref{dendrinosstovall}, though useful, are insufficient for the present purpose. Observe that although the even fibres of the tower constructed in Lemma \ref{dendrinosstovall} are two-dimensional sets, consisting of points $(r_{j/2}, t_j) \in [1,2] \times I$, all the bounds are decidedly one-dimensional in the sense that they are in terms of the $t_j$ variables and there is little reference to the dilation parameters. An additional refinement is necessary to take advantage the higher dimensionality of the even fibres.

\begin{lem}\label{towerlem} Fix $0 < \delta \ll 1$ a small parameter. There exists a point $(x_0, r_0) \in F$ and a tower $\{\Omega_k\}_{k = 1}^{d+1}$ satisfying all the properties of Lemma \ref{dendrinosstovall} with the additional property that for each even $1 < j \leq d+1$ either
\begin{equation*}
|t_j - t_{j-1}| \geq \delta (\alpha\beta)^{1/2} t_{j-1}^{-2K/d(d+1)}
\end{equation*}
holds for all $(\mathbf{r}_j, \mathbf{t}_j) \in \Omega_j$, or
\begin{equation*}
|t_j - t_{j-1}| < \delta (\alpha\beta)^{1/2} t_{j-1}^{-2K/d(d+1)} \quad \textrm{ and } \quad |r_{j/2} - r_{j/2-1}| \gtrsim (\beta/\alpha)^{1/2}
\end{equation*}
both hold for all $(\mathbf{r}_j, \mathbf{t}_j) \in \Omega_j$. If the former case the index $j$ is designated ``pre-red'', whilst in the latter $j$ is designated ``pre-blue''. The odd vertices are ``pre-achromatic''; that is, they are not assigned a ``pre-colour''. 
\end{lem}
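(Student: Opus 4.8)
The plan is to take as a starting point a tower $\{\Omega_k\}_{k=1}^{d+1}$ of the kind produced by Lemma \ref{dendrinosstovall} (or, equivalently, to weave the procedure below into the construction underlying that lemma) and then to prune it, one even level at a time, so as to install the dichotomy. Since there are only $\lfloor (d+1)/2\rfloor = O(1)$ even levels and, as will be seen, each pruning step involves only $O(1)$ applications of a pigeonhole argument, the various implicit constants degrade by at most a bounded factor. The odd levels require nothing, since declaring them pre-achromatic is merely a definition. So fix an even level $1 < j \leq d+1$; writing $j = 2\ell$, the dilation parameter $r_{j/2} = r_\ell$ is the one introduced at level $j$, while $r_{j/2-1} = r_{\ell-1}$ is already determined by $(x_0, r_0)$ and $\mathbf{r}_{j-1}$.

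Temporarily fix a base point $(\mathbf{r}_{j-1}, \mathbf{t}_{j-1}) \in \Omega_{j-1}$ and recall that its fibre $\Omega_j(\mathbf{r}_{j-1}, \mathbf{t}_{j-1}) \subseteq [1,2] \times I$ has $\nu_P$-measure $\gtrsim \beta$. I would split it into its \emph{far} portion, where $|t_j - t_{j-1}| \geq \delta(\alpha\beta)^{1/2} t_{j-1}^{-2K/d(d+1)}$, and its \emph{near} portion, where the reverse holds; one of the two carries at least half the $\nu_P$-mass. The crux is the near portion. There $t_j$ is confined to an arc about $t_{j-1}$ of length $< 2\delta(\alpha\beta)^{1/2} t_{j-1}^{-2K/d(d+1)}$; since $t_{j-1} \gtrsim \alpha^\kappa$ by property 1) of Lemma \ref{dendrinosstovall} and $(\alpha\beta)^{1/2} \leq \alpha$ (recall $\alpha > \beta$ is assumed), for $\delta$ small depending only on $d, n$ this length is $\ll t_{j-1}$, so $\lambda_P(t) = t^{2K/d(d+1)} \sim t_{j-1}^{2K/d(d+1)}$ throughout the arc. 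Hence for each fixed $r$ the $t$-section of the near portion has $\mu_P$-measure $\lesssim \delta(\alpha\beta)^{1/2}$, and consequently, if the near portion carries $\nu_P$-mass $\gtrsim \beta$, its projection onto the $r$-axis must have Lebesgue measure $\gtrsim \delta^{-1}(\beta/\alpha)^{1/2}$. (In particular, since Corollary \ref{separation} already forces $t_j - t_{j-1} \gtrsim \beta t_{j-1}^{-2K/d(d+1)}$ on $\Omega_j$, the near portion is empty unless $(\beta/\alpha)^{1/2} \lesssim \delta$, so the pre-blue case is confined to that regime.) Deleting from this $r$-projection the $c(\beta/\alpha)^{1/2}$-neighbourhood of the fixed value $r_{j/2-1}$ still leaves an $r$-set of measure $\gtrsim \delta^{-1}(\beta/\alpha)^{1/2}$ (using $\delta \ll 1$ again), and a short computation shows the corresponding restriction of the near portion retains $\nu_P$-measure $\gtrsim \beta$. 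Thus, fibre by fibre, I obtain a sub-fibre of $\nu_P$-measure $\gtrsim \beta$ — the far portion, respectively this refined near portion — on which the appropriate alternative of the dichotomy holds.

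It then remains to globalise the choice of alternative over the whole of level $j$. I would label each $(\mathbf{r}_{j-1}, \mathbf{t}_{j-1}) \in \Omega_{j-1}$ by whichever of its far/near portions is the heavier, keeping only that portion, and then cascade the labels down the tower: for each $(\mathbf{r}_{j-2}, \mathbf{t}_{j-2}) \in \Omega_{j-2}$ its level-$(j-1)$ fibre splits by label into two pieces, the heavier of which (in $\mu_P$- or $\nu_P$-measure, as appropriate) is retained and furnishes a label for $(\mathbf{r}_{j-2}, \mathbf{t}_{j-2})$; iterating down to $\Omega_1$ and then keeping the globally dominant label there yields a sub-tower whose entire level $j-1$ carries a single label, with all the fibre-measure lower bounds of Lemma \ref{dendrinosstovall} intact up to a bounded factor. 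Passing, at level $j$, to the far (respectively refined near) sub-fibres constructed above, and then re-intersecting each higher level $\Omega_k$ ($k \geq j$) with the constraint that its base lie in the pruned tower — an operation that only deletes points, and hence preserves all fibre-measure bounds as well as any pre-colours assigned at higher even levels — completes the treatment of level $j$: it is now pre-red, respectively pre-blue. The remaining properties of Lemma \ref{dendrinosstovall} (the ordering $t_1 < \dots < t_k$, the containments $\Phi_k(\Omega_k) \subseteq E$ or $F$, the lower bounds on $\int_{t_{k-1}}^{t_k}\lambda_P$, and $\alpha^\kappa \lesssim t_1$) are pointwise conditions and so are inherited by any sub-tower. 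Processing the even levels in decreasing order — so that a level is only ever shrunk after being assigned its pre-colour — finishes the construction.

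The step I expect to demand the most care is the bookkeeping of the downward cascade at each even level: one must check that the successive halving pigeonholes never destroy the fibre-measure bounds of Lemma \ref{dendrinosstovall} for the levels below, and that the subsequent reconstruction of the levels above leaves untouched the pre-colours of the already-processed higher even levels. By contrast, the genuinely new analytic ingredient is comparatively clean — it is the elementary observation, quantified through the measure estimate for short arcs in the weighted measure $\mu_P$, that a fibre whose curve parameters are crowded together is forced to spread out in the dilation variable, and it is precisely this that produces the pre-blue alternative and exploits the two-dimensionality of the even fibres.
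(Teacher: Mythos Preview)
Your proposal is correct and rests on the same analytic observation as the paper: an even fibre of $\nu_P$-mass $\gtrsim \beta$ whose $t$-coordinate is confined to an arc of length $\lesssim \delta(\alpha\beta)^{1/2}t_{j-1}^{-2K/d(d+1)}$ about $t_{j-1}$ must spread in the dilation parameter, so one may excise a $(\beta/\alpha)^{1/2}$-neighbourhood of $r_{j/2-1}$ at negligible cost. The organisation, however, differs. The paper builds the $r$-separation into the \emph{initial} tower: when constructing the even fibre $\widetilde{\Omega}_{j}(\mathbf{r}_{j-1},\mathbf{t}_{j-1})$ it excises in advance a small rectangle $R(\mathbf{r}_{j-1},\mathbf{t}_{j-1})=\{|r-r_{j/2-1}|\lesssim(\beta/\alpha)^{1/2}\}\times\{t_{j-1}\leq t\leq\tilde{s}_{j-1}\}$ of $\nu_P$-measure a small multiple of $\beta$, so that any point which later falls in the near-$t$ regime is \emph{automatically} $r$-separated. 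The far/near dichotomy is then installed across all even levels simultaneously by a single abstract refinement lemma (Lemma~\ref{refinecor}), which is precisely your downward cascade packaged once and for all and proved by induction on the tower height. Your route --- post-hoc splitting of each even fibre, a direct measure argument to secure $r$-separation on the near portion, and a hand-rolled cascade processed level by level --- is equivalent, but carries the bookkeeping burden you correctly flag; the paper's packaging sidesteps this by front-loading the rectangle removal and isolating the cascade as a standalone statement.
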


\begin{rmk} The partition of the indices into the sets of odd, pre-red and pre-blue indices plays a very similar r\^ole to the construction of the band structure in \cite{Christ1998} and in particular the ``slicing method'' of \cite{Christ, Christ1998} will be utilised. 
\end{rmk}

\begin{rmk} As the prefix ``pre-'' suggests, later in the argument it will be convenient to relabel the indices. In particular, in the following section an updated labelling will be introduced which designates the indices either ``red'', ``blue'' or ``achromatic''. 
\end{rmk}

The result is essentially established as follows. By applying the argument of Dendrinos and Stovall \cite{Dendrinos} one obtains an initial tower with the properties stated in Lemma \ref{dendrinosstovall}. To ensure the additional property described in Lemma \ref{towerlem} holds one further refines the tower, appealing to the following elementary (but notationally-involved) result.

\begin{lem}\label{refinecor} Let $\{ \widetilde{\Omega}_j\}_{j=1}^D$ be a tower of height $D$ and for each even $1 < k \leq D$ let
\begin{equation*}
\big\{A_{k}(\mathbf{r}_{k-1}, \mathbf{t}_{k-1}) : (\mathbf{r}_{k-1}, \mathbf{t}_{k-1}) \in \widetilde{\Omega}_{k-1} \big\}
\end{equation*}
 a collection of measurable subsets of $[1,2] \times I$. Then there exists a tower\footnote{Strictly speaking, $\{\Omega_j\}_{j=1}^D$ will only satisfy a weak definition of a tower: see Remark \ref{measure remark} below.} $\{\Omega_j\}_{j=1}^D$ satisfying:
\begin{enumerate}[a)]
\item $\Omega_1 \subseteq \widetilde{\Omega}_1$ and $\Omega_j(\mathbf{r}_{j-1}, \mathbf{t}_{j-1}) \subseteq \widetilde{\Omega}_j(\mathbf{r}_{j-1}, \mathbf{t}_{j-1})$ for all $(\mathbf{r}_{j-1}, \mathbf{t}_{j-1}) \in \Omega_{j-1}$ and all $1 < j \leq D$;
\item The following estimates hold for the fibres:
\begin{enumerate}[i)]
\item $\mu_P(\Omega_1) \geq 2^{-\floor{D/2}} \mu_P(\widetilde{\Omega}_1)$.
\item Whenever $1 < j \leq D$ is odd,
\begin{equation*}
\mu_P\big(\Omega_j(\mathbf{r}_{j-1}, \mathbf{t}_{j-1})\big) \geq 2^{-\floor{D/2}} \mu_P\big(\widetilde{\Omega}_j(\mathbf{r}_{j-1}, \mathbf{t}_{j-1})\big)
\end{equation*}
holds for all $(\mathbf{r}_{j-1}, \mathbf{t}_{j-1}) \in \Omega_{j-1}$.
\item Whenever $1 < j \leq D$ is even, 
\begin{equation*}
\nu_P\big(\Omega_j(\mathbf{r}_{j-1}, \mathbf{t}_{j-1})\big) \geq 2^{-\floor{D/2}} \nu_P\big(\widetilde{\Omega}_j(\mathbf{r}_{j-1}, \mathbf{t}_{j-1})\big)
\end{equation*}
holds for all $(\mathbf{r}_{j-1}, \mathbf{t}_{j-1}) \in \Omega_{j-1}$.
\end{enumerate}
\item For each even $1 < k \leq D$ precisely one of the following holds:
\begin{enumerate}[i)] 
\item $\Omega_{k}(\mathbf{r}_{k-1}, \mathbf{t}_{k-1}) \subseteq A_k(\mathbf{r}_{k-1}, \mathbf{t}_{k-1})$ for all $(\mathbf{r}_{k-1}, \mathbf{t}_{k-1}) \in \Omega_{k-1}$;
\item $\Omega_{k}(\mathbf{r}_{k-1}, \mathbf{t}_{k-1}) \cap A_k(\mathbf{r}_{k-1}, \mathbf{t}_{k-1}) = \emptyset$ for all $(\mathbf{r}_{k-1}, \mathbf{t}_{k-1}) \in \Omega_{k-1}$.
\end{enumerate}
\end{enumerate}
\end{lem}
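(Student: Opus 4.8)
The plan is to establish Lemma \ref{refinecor} by a downward induction on the height $D$, refining the tower one level at a time from the top, and using a pigeonholing (stopping-time) argument at each even level to force the dichotomy in part c). Before beginning, I would note that conditions b) i)--iii) together amount to the statement that, at each of the $\floor{D/2}$ even levels, one loses a factor of at most $2$ in the relevant ($\mu_P$- or $\nu_P$-) measure of every fibre, so an acceptable strategy is to design the refinement so that \emph{each even level} costs at most a single factor of $2$ and each odd level costs nothing; the total loss is then $2^{-\floor{D/2}}$ as required.

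Concretely, I would proceed as follows. Fix an even $1 < k \leq D$ and, temporarily ignoring all levels above $k$, consider the level-$(k-1)$ set $\widetilde{\Omega}_{k-1}$ together with, for each base point $(\mathbf{r}_{k-1},\mathbf{t}_{k-1}) \in \widetilde{\Omega}_{k-1}$, the fibre $\widetilde{\Omega}_k(\mathbf{r}_{k-1},\mathbf{t}_{k-1})$ and its intersection with $A_k(\mathbf{r}_{k-1},\mathbf{t}_{k-1})$. Partition the base set $\widetilde{\Omega}_{k-1}$ into the ``good'' part $G_k$, consisting of those base points for which $\nu_P\bigl(\widetilde{\Omega}_k \cap A_k\bigr) \geq \tfrac12 \nu_P(\widetilde{\Omega}_k)$, and its complement $B_k$. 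On $G_k$ replace the level-$k$ fibre by $\widetilde{\Omega}_k \cap A_k$ (which loses at most a factor $2$ and realises alternative c) i)); on $B_k$ replace it by $\widetilde{\Omega}_k \setminus A_k$ (which, by definition of $B_k$, loses at most a factor $2$ and realises c) ii)). Since by definition of a tower each $\widetilde{\Omega}_{k-1}$ base point is ``used'' --- it arises as a projection of points in $\widetilde{\Omega}_k$ --- the restriction of the tower to the base points in $G_k$ (respectively $B_k$) is again a tower below level $k$; one simply keeps whichever of the two sub-towers is ``larger'' in the appropriate sense and propagates the restriction of the base set downward through levels $k-1, k-2, \dots, 1$ (odd levels are untouched, so they cost nothing). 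Crucially, restricting the base set $\widetilde{\Omega}_{k-1}$ only shrinks its fibres' domains but does not shrink the level-$k$ fibres themselves over the surviving base points, so the $2^{-1}$ loss really is incurred only once, at level $k$. Iterating this over all even $k$, working downward from $k = D$ (or $D-1$) to $k = 2$, and composing the restrictions, yields a tower satisfying a), b) and c) simultaneously.

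The main obstacle is purely bookkeeping: one must verify that the operations of (i) intersecting a fixed even fibre with $A_k$ and (ii) restricting the base set to $G_k$ or $B_k$ genuinely produce a tower in the sense of the definition (i.e. that the projection/monotonicity condition $(\mathbf{r}_j,\mathbf{t}_j) \in \Omega_j \Rightarrow (\mathbf{r}_{j-1},\mathbf{t}_{j-1}) \in \Omega_{j-1}$ is preserved), and --- more delicately --- that the measure bound b) is not degraded by processing several even levels. The footnote in the statement (referring to ``a weak definition of a tower'' and Remark \ref{measure remark}) signals the genuine subtlety here: after restricting the base sets, the surviving fibres $\Omega_j(\mathbf{r}_{j-1},\mathbf{t}_{j-1})$ may fail to be defined for \emph{every} point of a product set and the measurability of the ``good base point'' sets $G_k$ must be checked --- but this is exactly the sort of technicality handled by a routine application of Fubini and the fact that all the sets involved are Borel, so I would defer it to the remark. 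I do not expect any essential difficulty; the content of the lemma is the clean dyadic pigeonholing, and the reason it is stated separately is precisely to isolate this tedious-but-standard step from the main line of argument.
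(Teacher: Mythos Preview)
Your high-level plan is right and coincides with the paper's: dyadic pigeonhole at each even level, incurring at most one factor of $2$ per even level, for a total loss of $2^{-\lfloor D/2\rfloor}$. Where your sketch is too loose is the phrase ``keep whichever of the two sub-towers is `larger' \ldots\ and propagate the restriction of the base set downward \ldots\ (odd levels are untouched, so they cost nothing).'' Restricting $\widetilde{\Omega}_{k-1}$ to $G_k$ \emph{does} shrink the level-$(k-1)$ fibres over points of $\widetilde{\Omega}_{k-2}$, possibly to measure zero, so ``larger'' in any global sense does not deliver the \emph{fibre-wise} bounds b) demands; and odd levels are certainly touched by any downward propagation.

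The paper closes this gap with a recursive stopping-time construction rather than a single comparison. Working by upward induction on $D$ (so levels $1,\dots,D-1$ are already handled by the inductive hypothesis, yielding $\{\widehat{\Omega}_j\}$), one treats the top even level $D$ by setting $\omega_D(\cdot):=\widehat{\Omega}_D(\cdot)\cap A_D(\cdot)$ and then, recursively for $j=D-1,D-2,\dots,1$, letting $\omega_j$ be the set of points in $\widehat{\Omega}_j$ over which the $\omega_{j+1}$-fibre has at least half the measure of the $\widehat{\Omega}_{j+1}$-fibre. The single binary choice is made only at the bottom: if $\mu_P(\omega_1)\geq\tfrac12\mu_P(\widehat{\Omega}_1)$ take $\Omega_j:=\omega_j$ for every $j$ (realising c) i) at level $D$); otherwise take the complements $\Omega_j:=\widehat{\Omega}_j\setminus\omega_j$ for every $j$ (realising c) ii)). In either branch every fibre loses at most one further factor of $2$ relative to $\widehat{\Omega}$, which combines with the inductive $2^{-\lfloor(D-1)/2\rfloor}$ to give $2^{-\lfloor D/2\rfloor}$. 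Your top-down iteration over the even levels would also work once you insert this recursive $\omega_j$ mechanism at each step, so the discrepancy is one of precision rather than strategy.
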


\begin{rmk}\label{measure remark} Strictly speaking, the proof below will not address the issue of whether the $\Omega_j$ are measurable (as required in the definition of a tower), although the fibres certainly will be (and therefore b) i) and b) ii) make sense). In practice, when the lemma is applied below it will be clear from the choice of sets $A_{k}(\mathbf{r}_{k-1}, \mathbf{t}_{k-1})$ that the resulting $\Omega_j$ are measurable and so this omission is unimportant for the present purpose.
\end{rmk}

\begin{proof}[Proof (of Lemma \ref{refinecor})] Proceed by induction on $D$, the case $D = 1$ being vacuous. Let $1 < D$ and fix a tower $\{\widetilde{\Omega}_j\}_{j=1}^D$. Apply the induction hypothesis to $\{\widetilde{\Omega}_j\}_{j=1}^{D-1}$ to obtain a tower $\{\widehat{\Omega}_j\}_{j=1}^{D-1}$ satisfying the properties a), b) and c) of the  corollary, with $D$ replaced by $D-1$. For each $(\mathbf{r}_{D-1}, \mathbf{t}_{D-1}) \in \widehat{\Omega}_{D-1}$ define $\widehat{\Omega}_{D}(\mathbf{r}_{D-1}, \mathbf{t}_{D-1}) := \widetilde{\Omega}_{D}(\mathbf{r}_{D-1}, \mathbf{t}_{D-1}) $. If $D$ is odd define $\widehat{\Omega}_{D}$ to be
\begin{equation*}
\big\{ (\mathbf{r}_{D}, \mathbf{t}_{D}) \in \widetilde{\Omega}_D : t_D \in \omega_{D}(\mathbf{r}_{D-1}, \mathbf{t}_{D-1})\textrm{ and } (\mathbf{r}_{D-1}, \mathbf{t}_{D-1}) \in \widehat{\Omega}_{D-1} \big\}
\end{equation*}
where $\mathbf{r}_{D} = \mathbf{r}_{D-1}$ and $\mathbf{t}_{D} = (\mathbf{t}_{D-1}, t_{D})$; throughout this article, similar notation will be used for elements belonging to levels of various parameter towers without further comment. Similarly, if $D$ is even, then define $\widehat{\Omega}_{D}$ to be
\begin{equation*}
\big\{ (\mathbf{r}_{D}, \mathbf{t}_{D}) \in \widetilde{\Omega}_D : (r_{D/2}, t_D) \in \omega_{D}(\mathbf{r}_{D-1}, \mathbf{t}_{D-1})\textrm{ and } (\mathbf{r}_{D-1}, \mathbf{t}_{D-1}) \in \widehat{\Omega}_{D-1} \big\}.
\end{equation*}

If $D$ is odd, then the proof is immediately completed by letting $\Omega_j := \widehat{\Omega}_j$ for $j=1,\dots, D$. It remains to consider the case when $D$ is even, which is more involved. Define a sequence of sets $\omega_{D-k} \subseteq \widehat{\Omega}_{D-k}$ for $1 \leq k \leq D-1$ recursively as follows. For all $(\mathbf{r}_{D-1}, \mathbf{t}_{D-1})  \in \widehat{\Omega}_{D-1}$ let
\begin{equation*}
\omega_D(\mathbf{r}_{D-1}, \mathbf{t}_{D-1}) := \widehat{\Omega}_D(\mathbf{r}_{D-1}, \mathbf{t}_{D-1}) \cap A(\mathbf{r}_{D-1}, \mathbf{t}_{D-1})
\end{equation*}
and define $\omega_{D-1}$ to be the set
\begin{equation*}
\left\{ (\mathbf{r}_{D-1}, \mathbf{t}_{D-1}) \in \widehat{\Omega}_{D-1} : \nu_P\big(\omega_D(\mathbf{r}_{D-1}, \mathbf{t}_{D-1})\big) \geq \frac{1}{2}\nu_P\big(\widehat{\Omega}_D(\mathbf{r}_{D-1}, \mathbf{t}_{D-1})\big) \right\}.
\end{equation*}
Hence, $\omega_{D-1}$ is the set of points $(\mathbf{r}, \mathbf{t}) \in \widehat{\Omega}_{D-1}$ with the property that most of the associated fibre $\widehat{\Omega}_D(\mathbf{r}, \mathbf{t})$ lies in $A(\mathbf{r}, \mathbf{t})$.

Now suppose $\omega_{D-k}$ has been defined for some $1 \leq k \leq D-2$ and let $(\mathbf{r}_{D-k-1}, \mathbf{t}_{D-k-1})  \in \widehat{\Omega}_{D-k-1}$. If $k$ is odd, then $D- k$ is also odd and $\omega_{D-k}(\mathbf{r}_{D-k-1}, \mathbf{t}_{D-k-1})$ is defined to be
\begin{equation*}
\big\{ t_{D-k} \in \widehat{\Omega}_{D-k}(\mathbf{r}_{D-k-1}, \mathbf{t}_{D-k-1}) : (\mathbf{r}_{D-k}, \mathbf{t}_{D-k}) \in \omega_{D-k}\big\}.
\end{equation*}
Let
\begin{equation*}
\omega_{D-k-1} := \left\{ (\mathbf{r}, \mathbf{t}) \in \widehat{\Omega}_{D-k-1} : \mu_P\big(\omega_{D-k}(\mathbf{r}, \mathbf{t})\big) \geq \frac{1}{2}\mu_P\big(\widehat{\Omega}_{D-k}(\mathbf{r}, \mathbf{t})\big) \right\}
\end{equation*}
so that $\omega_{D - k -1}$ is the set of points $(\mathbf{r}, \mathbf{t}) \in \widehat{\Omega}_{D-k -1}$ with the property that most of the associated fibre $\widehat{\Omega}_{D-k}(\mathbf{r}, \mathbf{t})$ lies in $\omega_{D-k}(\mathbf{r}, \mathbf{t})$.

Similarly, if $k$ is even, then $D - k$ is even and $\omega_{D-k}(\mathbf{r}_{D-k-1}, \mathbf{t}_{D-k-1})$ is defined to be
\begin{equation*}
\big\{ (r_{(D-k)/2}, t_{D-k}) \in \widehat{\Omega}_{D-k}(\mathbf{r}_{D-k-1}, \mathbf{t}_{D-k-1}) : (\mathbf{r}_{D-k}, \mathbf{t}_{D-k}) \in \omega_{D-k}\big\}
\end{equation*}
and one completes the recursive definition by letting
\begin{equation*}
\omega_{D-k-1} := \left\{ (\mathbf{r}, \mathbf{t}) \in \widehat{\Omega}_{D-k-1} : \nu_P\big(\omega_{D-k}(\mathbf{r}, \mathbf{t})\big) \geq \frac{1}{2}\nu_P\big(\widehat{\Omega}_{D-k}(\mathbf{r}, \mathbf{t})\big) \right\}.
\end{equation*}
Having constructed the sequence $\omega_{D-k}$ for $1 \leq k \leq D-1$, suppose $\mu_P(\omega_1) \geq \tfrac{1}{2}\mu_P(\widehat{\Omega}_1)$. If one defines $\Omega_1 := \omega_1$ and $\Omega_j(\mathbf{r}_{j-1}, \mathbf{t}_{j-1}) := \omega_j(\mathbf{r}_{j-1}, \mathbf{t}_{j-1})$ for $1 < j \leq D$, then one may construct a tower inductively by setting
\begin{equation}\label{tower recursion 1}
\Omega_j := \{(\mathbf{r}_{j}, \mathbf{t}_{j}) \in [1,2]^{(j-1)/2} \times [0,1]^{j} : (\mathbf{r}_{j-1}, \mathbf{t}_{j-1}) \in \Omega_{j-1} \textrm{ and } t_j \in \Omega_j(\mathbf{r}_{j-1}, \mathbf{t}_{j-1}) \}
\end{equation}
for $j > 1$ odd and
\begin{equation}\label{tower recursion 2}
\Omega_j := \{(\mathbf{r}_{j}, \mathbf{t}_{j}) \in [1,2]^{j/2} \times [0,1]^{j} : (\mathbf{r}_{j-1}, \mathbf{t}_{j-1}) \in \Omega_{j-1} \textrm{ and } (r_{j/2},t_j) \in \Omega_j(\mathbf{r}_{j-1}, \mathbf{t}_{j-1}) \}
\end{equation}
for $j$ even. It immediately follows from the definitions that the resulting tower $\{\Omega_j\}_{j-1}^{D}$ satisfies the properties stated in the lemma with c) i) holding for $k=D$. 

On the other hand, if $\mu_P(\omega_1) < \tfrac{1}{2}\mu_P(\widehat{\Omega}_1)$, then define $\Omega_1 := \widehat{\Omega}_1 \setminus \omega_1$ and let
\begin{equation*}
\Omega_j(\mathbf{r}_{j-1}, \mathbf{t}_{j-1}) := \widehat{\Omega}_j(\mathbf{r}_{j-1}, \mathbf{t}_{j-1}) \setminus\omega_j(\mathbf{r}_{j-1}, \mathbf{t}_{j-1})
\end{equation*}
for $1 < j \leq D$ so that properties a) and b) i) and c) clearly hold for the resulting tower $\{\Omega_j\}_{j-1}^{D}$, which is again defined by \eqref{tower recursion 1} and \eqref{tower recursion 2}. To prove b) ii), suppose $1 < j \leq D$ is odd and $(\mathbf{r}_{j-1}, \mathbf{t}_{j-1}) \in \Omega_{j-1}$. Thus, $(r_{(j-1)/2}, t_{j-1}) \in \Omega_{j-1}(\mathbf{r}_{j-2}, \mathbf{t}_{j-2})$ and, by the definition of $\omega_{j-1}(\mathbf{r}_{j-2}, \mathbf{t}_{j-2})$, it follows that $(\mathbf{r}_{j-1}, \mathbf{t}_{j-1}) \in \widehat{\Omega}_{j-1} \setminus\omega_{j-1}$. Finally, the definition of $\omega_{j-1}$ ensures
\begin{align*}
\mu_P\big(\Omega_{j}(\mathbf{r}_{j-1}, \mathbf{t}_{j-1})\big) &=\mu_P\big(\widehat{\Omega}_{j}(\mathbf{r}_{j-1}, \mathbf{t}_{j-1})\big) - \mu_P\big(\omega_{j}(\mathbf{r}_{j-1}, \mathbf{t}_{j-1})\big) \\
&\geq 2^{-1}\mu_P\big(\widehat{\Omega}_{j}(\mathbf{r}_{j-1}, \mathbf{t}_{j-1})\big)\\
&\geq 2^{-\floor{(D-1)/2}+1}\mu_P\big(\widetilde{\Omega}_{j}(\mathbf{r}_{j-1}, \mathbf{t}_{j-1})\big),
\end{align*}
where the induction hypothesis has been applied in the last inequality. A similar argument shows b) iii) also holds, completing the proof. 

\end{proof}

Having stated this refinement result one may proceed to prove Lemma \ref{towerlem}. 

\begin{proof}[Proof (of Lemma \ref{towerlem})] Observe, defining $\Sigma := \R^d \times [1,2] \times I$ it follows that
\begin{equation*}
\langle A \chi_E, \chi_F \rangle = \int_{\Sigma} \chi_{U}(x,r,t)\,\lambda_P(t) \ud x \ud r\ud t
\end{equation*}
where $\lambda_P(t):= t^{2K/d(d+1)}$ and
\begin{equation*}
U := \big\{ (x,r,t) \in \Sigma : x - r P(t) \in E \textrm{ and } (x,r) \in F \big\}.
\end{equation*}
Writing $I := (a,b)$ where $a \geq 0$ and $b - a =1$, a method due to Dendrinos and Stovall \cite{Dendrinos} may be applied to produce a sequence $\{U_k\}_{k=0}^{\infty}$ of decreasing subsets of $U$ of pairwise comparable measure such that for all $k \geq 1$ either
\begin{equation}\label{ds1}
\int_t^b \chi_{U_{k-1}}(x , r, \tau)\, \lambda_P(\tau)\ud \tau \geq 4^{-(k+1)} \alpha
\end{equation}
or
\begin{equation}\label{ds2}
\int_1^2 \int_t^b \chi_{U_{k-1}}(x - r P(t) +\rho P(\tau), \rho, \tau)\, \lambda_P(\tau)\ud \tau \ud \rho \geq 4^{-(k+1)} \beta
\end{equation}
holds for all $(x,r,t) \in U_k$. Specifically, the $\{U_k\}_{k=0}^{\infty}$ can be chosen so that
\begin{equation*}
\int_{\Sigma} \chi_{U_0}(x,r,t)\,\lambda_P(t) \ud x \ud r\ud t \geq \frac{1}{2}\langle A \chi_E, \chi_F \rangle
\end{equation*}
and for all $k \geq 1$:
\begin{enumerate}[i)]
\item $U_k \subseteq U_{k-1}$ and
\begin{equation*}
\int_{\Sigma} \chi_{U_{k}}(x,r,t) \,\lambda_P(t) \ud x\ud r\ud t  \geq \frac{1}{4}\int_{\Sigma} \chi_{U_{k-1}}(x,r,t) \,\lambda_P(t)\ud x\ud r\ud t;
\end{equation*}
\item If $k \not\equiv d \mod 2$ (respectively, $k \equiv d \mod 2$), then \eqref{ds1} (respectively \eqref{ds2}) holds for all $(x,r,t) \in U_k$.
\item Furthermore, for each $k$, if $t \in I$ is such that $(x,r,t) \in U_k$ for some $(x,r) \in \R^d \times [1,2]$, then $t \geq  (\alpha/2 \kappa)^{\kappa}$ where $\kappa$ is as in \eqref{kappa}.  
\end{enumerate}

This construction is due to Dendrinos and Stovall \cite{Dendrinos}, however the details are appended for completeness.

Fix $(x_0, r_0, t_0) \in U_{d+1}$. The next step is to use the sets $\{U_k\}_{k=0}^{d+1}$ to construct an initial tower $\{\widetilde{\Omega}_j\}_{j=1}^{d+1}$ satisfying the properties of Lemma \ref{dendrinosstovall} and such that whenever $(\mathbf{r}_j, \mathbf{t}_j) \in \widetilde{\Omega}_j$ for some $1 \leq j \leq d+1$, it follows that
\begin{equation*}
\left\{\begin{array}{ll}
\left(\Psi_j(\mathbf{r}_j, \mathbf{t}_j), r_{j/2}, t_{j}\right) \in U_{d+1-j} & \textrm{if $0 \leq j \leq d+1$ is even}\\
&\\
\left(\Psi_{j-1}(\mathbf{r}_{j-1}, \mathbf{t}_{j-1}), r_{\floor{j/2}}, t_{j}\right) \in U_{d+1-j} & \textrm{if $1 \leq j \leq d+1$ is odd}
\end{array}\right.
\end{equation*}
where $\Psi_j(\mathbf{r}_j, \mathbf{t}_j) := \Psi_j(x_0, r_0;\mathbf{r}_j, \mathbf{t}_j)$ for $j \geq 1$ is as defined in \eqref{parf} and $\Psi_0(\mathbf{r}_0, \mathbf{t}_0) := x_0$. It is convenient to consider $(\mathbf{r}_0, \mathbf{t}_0)$ as some arbitrary object (say, $(\mathbf{r}_0, \mathbf{t}_0) := (r_0, t_0)$) and $\Psi_0$ as a function on the singleton set $\widetilde{\Omega}_0 := \{(\mathbf{r}_0, \mathbf{t}_0)\}$, taking the value $x_0$.

The tower $\{\widetilde{\Omega}_j\}_{j=1}^{d+1}$ is constructed recursively. To begin, define $\widetilde{\Omega}_0$ as above; suppose $\widetilde{\Omega}_{j}$ has been defined for some $0 \leq j \leq d$ and fix $(\mathbf{r}_{j}, \mathbf{t}_{j}) \in \widetilde{\Omega}_{j}$. The argument splits into two cases, depending on the parity of $j$.

\subsection*{Case 1: $j$ is even.} Since $(\Psi_{j}(\mathbf{r}_{j}, \mathbf{t}_{j}), r_{j/2}, t_{j}) \in U_{d+1-j}$ and $d+1-j \not\equiv d \mod 2$, one may apply \eqref{ds1} to deduce
\begin{equation}\label{ds3}
 \int_{t_{j}}^b \chi_{U_{d-j}}(\Psi_{j}(\mathbf{r}_{j}, \mathbf{t}_{j}), r_{j/2}, \tau)\, \lambda_P(\tau)\ud \tau \geq 4^{-(d+2-j)} \alpha.
\end{equation}
It is therefore possible to choose $t_j < s_j < b$ with the property
\begin{equation}\label{ds4}
\int_{t_j}^{s_j} \,\lambda_P(\tau)\ud \tau = 4^{-(d+5/2-j)} \alpha.
\end{equation}
Define the set
\begin{equation*}
\widetilde{\Omega}_{j+1}(\mathbf{r}_j, \mathbf{t}_j) := \big\{ t_{j+1} \in (s_j, b] : (\Psi_{j}(\mathbf{r}_{j}, \mathbf{t}_{j}), r_{j/2}, t_{j+1}) \in U_{d-j} \big\},
\end{equation*}
noting, by \eqref{ds3} and \eqref{ds4}, that this has measure $\mu_P(\widetilde{\Omega}_{j+1}(\mathbf{r}_j, \mathbf{t}_j)) \geq 4^{-(d+5/2-j)} \alpha$. To complete the recursive step in this case, if $j=0$ let $\widetilde{\Omega}_1 := \widetilde{\Omega}_1(\mathbf{r}_0, \mathbf{t}_0)$ whilst for $j > 0$ define
\begin{equation*}
\widetilde{\Omega}_{j+1} := \big\{ (\mathbf{r}_{j+1}, \mathbf{t}_{j+1} ) : (\mathbf{r}_{j}, \mathbf{t}_{j} ) \in \widetilde{\Omega}_j \textrm{ and } t_{j+1} \in \widetilde{\Omega}_{j+1}(\mathbf{r}_j, \mathbf{t}_j) \big\}.
\end{equation*}

\subsection*{Case 2: $j$ is odd.} Since $(\Psi_{j-1}(\mathbf{r}_{j-1}, \mathbf{t}_{j-1}), r_{\floor{j/2}}, t_{j}) \in U_{d+1-j}$ and $d+1-j \equiv d \mod 2$, one may apply \eqref{ds2} to deduce
\begin{equation}\label{ds5}
\int_1^2 \int_{t_{j}}^b \chi_{U_{d-j}}(\Psi_{j}(\mathbf{r}_{j}, \mathbf{t}_{j}) +\rho P(\tau), \rho, \tau)\, \lambda_P(\tau)\ud \tau \ud \rho \geq 4^{-(d+2-j)} \beta.
\end{equation}
Here the identity 
\begin{equation*}
\Psi_{j}(\mathbf{r}_{j}, \mathbf{t}_{j}) = \Psi_{j-1}(\mathbf{r}_{j-1}, \mathbf{t}_{j-1}) - r_{\floor{j/2}}P(t_j)
\end{equation*}
has been applied, which is a consequence of the definition \eqref{parf}. It is therefore possible to choose $t_j < s_j < b$ and $t_j < \tilde{s}_j$ with the properties
\begin{equation}\label{ds6}
\int_{t_j}^{s_j} \,\lambda_P(\tau)\ud \tau = 4^{-(d+5/2-j)} \beta, \quad \int_{t_j}^{\tilde{s}_j} \,\lambda_P(\tau)\ud \tau = 2^{-(d+3-j)} (\alpha\beta)^{1/2}.
\end{equation}
Define $\widetilde{\Omega}_{j+1}(\mathbf{r}_j, \mathbf{t}_j)$ to be the set
\begin{equation*}
\left\{ (r_{\ceil{j/2}}, t_{j+1}) \in D(\mathbf{r}_j, \mathbf{t}_j) : \left(\Psi_{j}(\mathbf{r}_{j}, \mathbf{t}_{j}) + r_{\ceil{j/2}} P(t_{j+1}), r_{\ceil{j/2}}, t_{j+1}\right) \in U_{d-j} \right\}
\end{equation*}
where $D(\mathbf{r}_j, \mathbf{t}_j) := ([1,2] \times (s_j, b])\setminus R(\mathbf{r}_j, \mathbf{t}_j)$ for the rectangle
\begin{equation*}
R(\mathbf{r}_j, \mathbf{t}_j):=\big\{ (r,t) \in \R^2 : |r - r_{\floor{j/2}}| \leq 2^{-(d+4-j)} (\beta/\alpha)^{1/2} \textrm{ and } t_j \leq t \leq \tilde{s}_j \big\}.
\end{equation*}
Observe, by \eqref{ds6} it follows that
\begin{equation*}
\int_1^2 \int_I \chi_{R(\mathbf{r}_j, \mathbf{t}_j)}(r,t)\,\lambda_P(t)\ud t \ud r \leq 4^{-(d+3-j)}\beta.
\end{equation*}
Thus, by \eqref{ds5} and \eqref{ds6}, the set $\widetilde{\Omega}_{j+1}(\mathbf{r}_j, \mathbf{t}_j)$ has measure $\nu_P(\widetilde{\Omega}_{j+1}(\mathbf{r}_j, \mathbf{t}_j))\geq 4^{-(d+3-j)} \beta$.

Finally, to complete the recursive definition let
\begin{equation*}
\widetilde{\Omega}_{j+1} := \big\{ (\mathbf{r}_{j+1}, \mathbf{t}_{j+1} ) : (\mathbf{r}_{j}, \mathbf{t}_{j} ) \in \Omega_j \textrm{ and } (r_{\ceil{j/2}}, t_{j+1}) \in \Omega_{j+1}(\mathbf{r}_j, \mathbf{t}_j) \big\}.
\end{equation*}

It is easy to verify the collection $\{\widetilde{\Omega}_j\}_{j=1}^{d+1}$ forms a tower satisfying all the properties of Lemma \ref{dendrinosstovall}. Finally, Lemma \ref{refinecor} is applied to further refine this tower to ensure the additional property stated in Lemma \ref{towerlem} holds. For each $1  < j \leq d+1$ even, define
\begin{equation*}
A(\mathbf{r}_{j-1}, \mathbf{t}_{j-1}) := \left\{ (r, t) \in [1,2] \times I : t - t_{j-1} > \delta (\alpha\beta)^{1/2}t_{j-1}^{-2K/d(d+1)} \right\}
\end{equation*} 
for all $(\mathbf{r}_{j-1}, \mathbf{t}_{j-1}) \in \widetilde{\Omega}_{j-1}$. Letting $\{\Omega_j\}_{j=1}^{d+1}$ denote the refined tower, the existence of which is guaranteed by Lemma \ref{refinecor}, it is easy to see this has all the desired properties. In particular, for each $1 < j \leq d+1$ even, precisely one of the following holds:
\begin{enumerate}[a)] 
\item For all $(\mathbf{r}_j, \mathbf{t}_j) \in \Omega_{j}$ one has $|t_j - t_{j-1}| > \delta (\alpha\beta)^{1/2}t_{j-1}^{-2K/d(d+1)}$.
\item For all $(\mathbf{r}_j, \mathbf{t}_j) \in \Omega_{j}$ one has $|t_j - t_{j-1}| \leq \delta (\alpha\beta)^{1/2}t_{j-1}^{-2K/d(d+1)}$. In this case, observe for $t_{j-1} \leq \tau \leq t_j$ it follows that
\begin{equation*}
|\tau - t_{j-1}| \leq \delta (\alpha\beta)^{1/2}t_{j-1}^{-2K/d(d+1)} \lesssim \delta \alpha (\alpha^{\kappa})^{-2K/d(d+1)} \lesssim \delta t_{j-1}
\end{equation*}
by condition iii) of the sets $U_k$ described above. Thus $\tau \lesssim t_{j-1}$ and consequently
\begin{equation*}
\int_{t_{j-1}}^{t_j}\,\lambda_P(\tau)\ud \tau \lesssim \delta (\alpha\beta)^{1/2}.
\end{equation*}
If $\delta$ is chosen from the outset to be sufficiently small depending only on $d$ and $K$, then it follows that $t_{j-1} < t_j < \tilde{s}_{j-1}$ by the preceding inequality and the definition of $\tilde{s}_{j-1}$ from \eqref{ds6}. Since $(r_{j/2}, t_j) \notin R(\mathbf{r}_{j-1}, \mathbf{t}_{j-1})$, one concludes that $|r_{j/2} - r_{j/2-1}| > 2^{-(d+5-j)} (\beta / \alpha)^{1/2}$.
\end{enumerate}
\end{proof}

\section{Definition of the parameter domain}\label{definitionparameter}

Henceforth fix a tower $\{\Omega_j\}_{j=1}^{d+1}$ satisfying the properties stated in Lemma \ref{towerlem} with a suitably small choice of $0 < \delta \ll 1$ so as to satisfy all the forthcoming requirements of the proof. It is stressed that the subsequent argument will require $\delta$ to be chosen depending only on the admissible parameters $d$ and $\deg P$; a careful examination of what follows shows such a choice of $\delta$ is always possible. 

Observe that the set $\Omega_{d+1}$ is of dimension $\floor{3(d+1)/2}$; one requires a domain of either dimension $d$ or $d+1$ to effectively parametrise either the set $E$ or $F$. Two methods can be applied to remedy this excess of variables. The first is to simply consider the tower defined only to a lower level; that is, only work with $\{\Omega_j\}_{j=1}^N$ for some $N \leq d+1$. The second method is to consider the whole tower $\{\Omega_j\}_{j=1}^{d+1}$ and \emph{freeze} a number of the variables $t_j$. What is essentially meant by this is that for some set of indices $\mathcal{I} \subset \{1, \dots, d+1\}$ and choice of $(s_i)_{i\in \mathcal{I}} \subset \R^{\# \mathcal{I}}$ each set $\Omega_j$ is replaced with 
\begin{equation*}
\big\{ (r, t) \in \Omega_j : t_i = s_i \textrm{ for all }i \in \mathcal{I} \cap \{1, \dots, j\} \big\}.
\end{equation*}
In order to optimise the subsequent Jacobian estimates, both methods are combined below and the variables to be frozen are chosen according to the ``pre-colour'' of their indices. In particular only $t_i$ for $i$ a pre-blue index will be frozen. In light of this discussion define the function $\zeta: \{1, \dots, d+1\} \rightarrow \{1,2\}$ as follows:
\begin{equation*}
\zeta(j) := \left\{\begin{array}{ll} 
2 & \textrm{if $j$ is pre-red} \\
1 & \textrm{otherwise}
\end{array} \right. \qquad \textrm{for $j = 1, \dots, d+1$.}
\end{equation*}
One can think of $\zeta(j)$ as the number of variables contributed by the fibres of the $j$th floor of the tower after the pre-blue variables have been frozen. Note that there exists a least $1 < N \leq d+1$ such that 
\begin{equation}\label{defn N}
Z(N) := \sum_{j=1}^{N} \zeta(j) \in \{d, d+1\};
\end{equation}
in particular, the number of variables contributed by the first $N$ floors corresponds to the dimension of either $E$ or $F$. At this point the proof splits into a number of different cases depending on the parity of $N$ and the value of $Z(N)$.

\begin{dfn} If $N$ is odd, then $\mathrm{Case}(1, Z(N))$ holds. If $N$ is even, then $\mathrm{Case}(2, Z(N))$ holds. 
\end{dfn}

Ostensibly there are four distinct cases; however, the minimality of $N$ precludes $\mathrm{Case}(1, d+1)$ and so it suffices to consider the remaining three cases.

The preceding observations lead to the definition of a suitable parameter domain $\Omega$ and mapping $\Phi$ which will form the focus of study for the remainder of the paper. By the nature of the definitions of $\Omega$ and $\Phi$, it will be convenient to introduce a relabelling of the relevant indices as ``red,'' ``blue'' or ``achromatic'' to replace the established ``pre-red, pre-blue, pre-achromatic'' system. All these definitions depend on which $\mathrm{Case}(i, j)$ happens to hold.

\subsection*{Case$\mathbf{(1, d)}$ and Case$\mathbf{(2, d+1)}$:} The simplest situation corresponds to when either $\mathrm{Case}(1, d)$ or $\mathrm{Case}(2, d+1)$ holds. In both instances one defines $\Omega := \Omega_{N}$ and $\Phi := \Phi_{N}$. In the former case $\Phi : \Omega \rightarrow E$ whilst in the latter $\Phi : \Omega \rightarrow F$ by the properties 2. i) and 3. i) of Lemma \ref{dendrinosstovall}, respectively. Here essentially no relabelling is required: each pre-red (respectively, pre-blue) index $1 \leq j \leq N$ is designated red (respectively, blue) whilst the odd indices are achromatic (that is, they have no colour assigned to them). 

\subsection*{Case$\mathbf{(2, d)}$:} This situation is slightly more complicated. Fix $t_0 \in \Omega_1$ and consider the family of sets $\{\Omega_j^*\}_{j=1}^{N}$ defined by
\begin{equation*}
\Omega_j^* := \big\{ (t_1, \dots, t_{j}) : (t_0, t_1, \dots, t_{j}) \in \Omega_{j+1} \big\} \qquad \textrm{for $j = 1, \dots, N$.}
\end{equation*}
It is easy to see $\{\Omega_j^*\}_{j=1}^{N}$ constitutes a type 2 tower. Let $y_0 := x_0 - r_0P(t_0)$ and let $\{\Phi^*_j\}_{j=1}^N$ denote the family of mappings associated to $\{\Omega_j^*\}_{j=1}^{N}$ and the point $y_0$, as defined in Definition \ref{associated mappings}. 
Define $\Omega := \Omega^*_N$ and $\Phi := \Phi^*_{N}$ and observe, by property 2. i) of Lemma \ref{dendrinosstovall}, that $\Phi : \Omega \rightarrow E$. In this case the colouring system of the indices is redefined. In particular,
\begin{enumerate}[i)]
\item The index 1 is designated red;
\item The odd indices $1 < j \leq N$ are designated red (respectively, blue) if $j+1$ was pre-red (respectively, pre-blue) in the previous scheme;
\item The even indices are designated achromatic. 
\end{enumerate}

\section{Freezing variables and families of parametrisations}\label{freeze}

Rather than use a single function to parametrise $E$ or $F$, here the slicing method of Christ \cite{Christ, Christ1998} is used to construct a family of maps $G_{\sigma}$. To begin some notation is introduced.

Let $M$ denote the number of non-blue indices in $\{1, \dots, N\}$ and label these indices $l_1 < l_2 < \dots < l_M$. Similarly, let $m$ (respectively, $n$) denote the number of red (respectively, blue) indices so that $M = N - n$.  For the reader's convenience the following table indicates the relationship between these parameters in the various cases.

\begin{equation}\label{counting variables}
    \begin{tabular}{ | c | c | c |}
    \hline
    & N & d  \\ \hline
    $\mathrm{Case}(1,d)$ & 2m +2n + 1 & 3m + 2n + 1  \\ \hline
    $\mathrm{Case}(2,d+1)$ & 2m +2n & 3m + 2n - 1  \\ \hline
    $\mathrm{Case}(2,d)$ & 2m +2n & 3m + 2n \\
    \hline
    \end{tabular}
\end{equation}
\vspace{10pt}

These computations follow immediately from the definition of $N$ in \eqref{defn N}.

Now let $ 1 \leq \mu_1 < \dots < \mu_m \leq M$ be such that $\{ l_{\mu_i}\}_{i=1}^m$ is precisely the set of red indices. Rather than the blue indices themselves, it will be useful to enumerate those indices which lie directly before a blue index. Irrespective of which case happens to hold, any blue index is at least 2 and so there are precisely $n$ indices lying directly before a blue index. In particular, let $1 \leq \nu_1 < \dots < \nu_{n} \leq M$ be such that $\{l_{\nu_j}+1\}_{j=1}^{n}$ are precisely the blue indices.

Define functions $\tau$ and $\sigma$ on $\Omega$ by
\begin{eqnarray*}
\tau = (\tau_1, \dots, \tau_M) &:=& ( t_{l_1}, \dots, t_{l_M}) \\
s = (s_{\nu_1}, \dots, s_{\nu_{n}}) &:=& (t_{l_{\nu_j}+1} - t_{l_{\nu_j}})_{j=1}^{n} \\
\sigma = (\sigma_{\nu_1}, \dots, \sigma_{\nu_n}) &:=& (s_{\nu_j} \tau_{\nu_j}^{2K/d(d+1)})_{j=1}^{n}.
\end{eqnarray*}

Finally let $\rho = (\rho_{\mu_1}, \dots, \rho_{\mu_m}, \rho_{\nu_1}, \dots, \rho_{\nu_{n}})$ where $\rho_{\mu_i}$ (respectively, $\rho_{\nu_j}$) is the dilation variable arising from the fibres of floor $l_{\mu_i}$ (respectively, $l_{\nu_j} +1$) of the tower. More precisely, $\rho_{\mu_j} := r_{\ceil{l_{\mu_i}/2}}$ for $1 \leq i \leq m$ whilst $\rho_{\nu_j} := r_{\ceil{(l_{\nu_j}+1)/2}}$ for $1 \leq j \leq n$.

Observe that the map 
\begin{equation}\label{phi change of variables}
\varphi: (r, t) \mapsto (\rho, \tau, \sigma)
\end{equation}
is a valid change of variables with Jacobian determinant satisfying
\begin{equation*}
\left|\det \frac{\partial \varphi}{\partial (r, t)}\right| = \prod_{j=1}^{n} \tau_{\nu_j}^{2K/d(d+1)}.
\end{equation*}
For $\sigma \in \R^n$ define the parameter set $\omega(\sigma) := \big\{ (\rho, \tau) : \varphi^{-1}(\rho, \tau, \sigma) \in \Omega \big\}$ and  let
\begin{equation}\label{setw}
W := \big\{ \sigma \in \R^{n} : \omega(\sigma) \neq \emptyset \} \subseteq \big[0, \delta(\alpha\beta)^{1/2}\big]^n
\end{equation}
where the inclusion follows from properties of the blue indices. Finally, consider the mapping $G_{\sigma}$ on $\omega(\sigma)$ by
\begin{equation*}
G_{\sigma}(\rho, \tau):= \Phi \circ \varphi^{-1}(\rho, \tau, \sigma).
\end{equation*}
By \eqref{counting variables}, it follows that in $\mathrm{Case}(1, d)$ and $\mathrm{Case}(2, d)$  the maps $G_{\sigma}$ are functions of $d$ variables and take values in $E$ whilst in $\mathrm{Case}(2, d+1)$ the $G_{\sigma}$ are functions of $d+1$ variables and take values in $F$. Hence in each case the maps $G_{\sigma}$ have the desirable property that the domain and codomain are of equal dimension.

Furthermore, the polynomial nature of maps $G_{\sigma}$ imply each has bounded multiplicity. In particular, the following well-known multiplicity lemma applies to this situation.

\begin{lem}[Multiplicity Lemma]\label{multiplicity} Let $Q: \R^d \rightarrow \R^d$ be a polynomial mapping; that is, $Q(t) = (Q_j(t))_{j=1}^d$ for all $t \in \R^d$ where each $Q_j : \R^d \rightarrow \R$ is a polynomial in $d$ variables. Suppose the Jacobian determinant $J_Q$ of $Q$ does not vanish everywhere. Then for almost every $x \in \R^d$ the set $Q^{-1}(\{x\})$ is finite. Moreover, for almost every $x \in \R^d$ the inequality
\begin{equation}\label{mult1}
\# Q^{-1}(\{x\}) \leq \prod_{j=1}^d \deg (Q_j)
\end{equation}
holds, where $\deg(Q_j)$ denotes the degree of $Q_j$.
\end{lem}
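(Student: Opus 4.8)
The plan is to reduce everything to two standard facts from algebraic geometry: that a polynomial self-map of $\C^d$ with non-vanishing Jacobian determinant is generically finite, and that a zero-dimensional affine complex variety cut out by $d$ equations of prescribed degrees obeys the affine Bézout bound on its number of points. First I would record the trivial observation that each component $Q_j$ is non-constant --- otherwise the $j$th row of the Jacobian matrix of $Q$ vanishes identically and $J_Q \equiv 0$, contrary to hypothesis --- so that $d_j := \deg(Q_j) \geq 1$ and the right-hand side of \eqref{mult1} is a genuine (finite, positive) integer.

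Next I would complexify. Since $J_Q$ is a polynomial not vanishing identically on $\R^d$, it is a non-zero polynomial and hence does not vanish identically on $\C^d$; thus the induced morphism $Q \colon \C^d \to \C^d$ is a local biholomorphism near any point where $J_Q \neq 0$, its image contains a Euclidean-open set, and therefore $Q$ is dominant. By the theorem on the dimension of the fibres of a dominant morphism (applied with source and target both equal to the irreducible variety $\C^d$) there is a non-empty Zariski-open set $U \subseteq \C^d$ such that the complex fibre
\begin{equation*}
V_x^{\C} := \{ t \in \C^d : Q_j(t) = x_j \ \text{for } j = 1, \dots, d \}
\end{equation*}
is zero-dimensional, hence finite, for every $x \in U$. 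Because $\R^d$ is Zariski-dense in $\C^d$, the set $U \cap \R^d$ is a non-empty Zariski-open subset of $\R^d$ and consequently $\R^d \setminus U$ has Lebesgue measure zero.

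It then remains to treat $x \in U \cap \R^d$. For such $x$ the set $V_x^{\C}$ is finite and is cut out in $\C^d$ by the $d$ polynomials $Q_1 - x_1, \dots, Q_d - x_d$ of degrees $d_1, \dots, d_d$, so the affine Bézout theorem gives $\# V_x^{\C} \leq \prod_{j=1}^d d_j$; since $Q^{-1}(\{x\}) = V_x^{\C} \cap \R^d$, both the finiteness of $Q^{-1}(\{x\})$ and the estimate \eqref{mult1} follow for every $x$ outside the null set $\R^d \setminus U$. I expect the only real obstacle here to be expository rather than mathematical: one must decide how much of the classical machinery (generic finiteness of a dominant morphism of affine spaces of equal dimension, and the affine Bézout bound) to quote as opposed to prove. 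Should a more self-contained treatment be preferred, one can bypass Bézout's theorem by an elementary elimination argument --- successively eliminating $t_d, t_{d-1}, \dots$ by resultants while tracking degrees --- which reproduces the same product bound, while the weaker statement that $Q^{-1}(\{x\})$ is merely discrete for a.e.\ $x$ follows directly over $\R$ from Sard's theorem together with the inverse function theorem applied at regular values.
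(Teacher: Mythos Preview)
Your argument is correct, and the overall skeleton---complexify, then invoke B\'ezout once you know the fibre is finite---matches the paper's. The difference lies in how the exceptional null set is produced and how finiteness of the fibre is established. You appeal to the generic-finiteness theorem for dominant morphisms between irreducible varieties of equal dimension to obtain a Zariski-open $U \subseteq \C^d$ over which the complex fibres are zero-dimensional. The paper instead identifies the exceptional set concretely as the set of critical values $Q(Z)$, where $Z = \{J_Q = 0\}$: since $Z$ is a proper real algebraic set it is Lebesgue-null, and since $Q$ is locally Lipschitz its image $Q(Z)$ is also null. For $x \notin Q(Z)$ the paper then uses the B\'ezout dichotomy (either the count is at most $\prod_j \deg Q_j$, or the intersection is infinite) and rules out the infinite alternative by observing that at any $t_0 \in Q^{-1}(\{x\})$ one has $J_Q(t_0) \neq 0$, so the hypersurfaces $\{Q_j = x_j\}$ meet transversally and $t_0$ is isolated. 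This is essentially the Sard/inverse-function-theorem route you mention at the end as an alternative, except that the paper avoids Sard by exploiting the fact that here the critical \emph{locus} itself is already null. Your approach is tidier from an algebro-geometric standpoint and packages the finiteness step into a single citation; the paper's is more self-contained, requiring only the Lipschitz image of a null set, the inverse function theorem, and B\'ezout.
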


The simple proof of this lemma appears in \cite{Christ1998}; however, it is included at the end of this section for completeness.

As a consequence of the Multiplicity Lemma, if $J_{\sigma}$ denotes the Jacobian of $G_{\sigma}$, then in $\mathrm{Case}(1, d)$ and $\mathrm{Case}(2, d)$ one concludes that the estimate
\begin{equation}\label{Jacobian estimate}
|E| \gtrsim \int_{\omega(\sigma)} |J_{\sigma}(\rho, \tau)| \,\ud \rho\ud \tau 
\end{equation}
holds for all $\sigma \in W$. In $\mathrm{Case}(2, d+1)$ there is a similar estimate but with $|E|$ replaced with $|F|$ on the left-hand side of the above expression. Thus, in order to establish either \eqref{equivweake} or \eqref{equivweakf} in the present cases it suffices to prove a suitable estimate for the Jacobian $|J_{\sigma}(\rho, \tau)|$ on the set $\omega(\sigma)$.

This section is concluded with the proof of the Multiplicity Lemma.

\begin{proof}[Proof (of the Multiplicity Lemma)] Since the zero locus $Z$ of $J_Q$ is a proper algebraic subset of $\R^d$ it has measure zero. Furthermore, as $Q$ is a polynomial (and therefore locally Lipschitz) mapping the image $Q(Z)$ of $Z$ under $Q$ has measure zero. It is claimed that \eqref{mult1} holds for all $x \in \R^d \setminus Q(Z)$. Indeed, fixing such an $x$ notice that $Q^{-1}(\{x\}) = \bigcap_{j=1}^d V_j^x$ where $\{V_j^x\}_{j=1}^d$ are algebraic sets given by
\begin{equation*}
V_j^x := \{ t \in \R^d : Q_j(t) - x_j = 0 \}.
\end{equation*}
Bezout's theorem implies that the cardinality of this intersection is either uncountable or at most $\prod_{j=1}^d \deg (Q_j)$.\footnote{Recall, Bezout's theorem states that for any collection $Q_1, \dots, Q_n$ of homogeneous polynomials on $\C\mathbb{P}^n$ the number of intersection points of the associated hypersurfaces $ \{z \in \C\mathbb{P}^n : Q_j(z) = 0 \}$ (counted with multiplicity) is either uncountable or precisely $\prod_{j=1}^n \deg (Q_j)$. The real version used here follows by homogenising the polynomials and taking the domain of the resulting functions to be $\mathbb{CP}^n$. One then applies Bezout's theorem in complex projective space, de-homogenises and restricts to real-value intersection points. See, for example, \cite{Hassett2007} pp 223-224.} It therefore suffices to show that $Q^{-1}(\{x\})$ is not uncountable; this is achieved by proving each point of the set is isolated. By the choice of $x$, whenever $t_0 \in Q^{-1}(\{x\})$ the vectors $\{ \nabla Q_j(t_0)\}_{j=1}^d$ span $\R^d$. Thus the $V_j^x$ are smooth hypersurfaces in a neighbourhood of $t_0$ which, of course, intersect at $t_0$ and are transversal at this 
point of intersection. It follows that $t_0$ must be an isolated point of $Q^{-1}(\{x\})$, as required. 
\end{proof}

\section{Reduction to Jacobian estimates}

Recall, in order to prove the main theorem it suffices to obtain a lower bound on either $|E|$ or $|F|$ in terms of $\alpha$ and $\beta$, as discussed in \eqref{equivweake} and \eqref{equivweakf}. In the previous sections,  it was shown that when either $\mathrm{Case}(1, d)$ or $\mathrm{Case}(2, d)$ holds a family of useful mappings $G_{\sigma} \colon \omega(\sigma) \to E$ can be constructed. These mappings effectively parametrise the set $E$ and, in particular, one has the estimate \eqref{Jacobian estimate}. A similar construction is available in $\mathrm{Case}(2, d+1)$ (this time parametrising the set $F$) and so it remains to find effective bounds for integrals such as that appearing in the right-hand side of \eqref{Jacobian estimate}. This will be achieved by estimating pointwise the Jacobian determinant $J_{\sigma}$ of the map $G_{\sigma}$. In order to state the main result in this direction, it is convenient to introduce the notation
\begin{equation*}
\eta := \left\{ \begin{array}{ll}
0 & \textrm{if either $\mathrm{Case}(1, d)$ or $\mathrm{Case}(2, d+1)$ holds} \\
1 & \textrm{if $\mathrm{Case}(2, d)$ holds}
\end{array}\right. .
\end{equation*}

\begin{lem}\label{jaclem1} Let $\sigma \in W$, where $W$ is as defined in \eqref{setw}. Then 
\begin{equation*}
|J_{\sigma}(\rho, \tau)| \gtrsim \alpha^{d(d+1)/2 - M}(\beta / \alpha )^{(m+n - \eta)/2} \prod_{l=1}^M \tau_l^{2K/d(d+1)}
\end{equation*}
for all $(\rho, \tau) \in \omega(\sigma)$. 
\end{lem}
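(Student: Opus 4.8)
The Jacobian $J_\sigma$ is, up to the factor $\prod_{j}\tau_{\nu_j}^{2K/d(d+1)}$ arising from the change of variables $\varphi$ in \eqref{phi change of variables}, the Jacobian of $\Phi$ restricted to the slice where the blue differences are frozen. Writing out $\Phi$ in terms of the $\Psi$-maps \eqref{parf}, \eqref{pare}, each column of the differential is either $\pm r_{\langle k/2\rangle} P'(t_k)$ (coming from a $t_k$-variable) or $\pm(P(t_{k}) - P(t_{k-1}))$ (coming from a surviving $r$-variable at an even floor) or, at a pre-blue floor, the $r$-derivative of $P(t_{k-1}) - P(t_k)$ while $s = t_k - t_{k-1}$ is held fixed, which is again of the form $r_{\langle k/2\rangle}\bigl(P'(t_{k-1}) - P'(t_k)\bigr)$ or similar. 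The strategy is to expand this determinant by multilinearity, normalise the $r$-factors (each lies in $[1,2]$, so they contribute harmlessly), and reduce everything to a determinant built out of $P'(t_l)$ for the non-blue indices $l_1 < \dots < l_M$ together with $M - $ (something) columns of the form $P(t_{l_\nu+1}) - P(t_{l_\nu})$ or divided differences. The key algebraic identity is that a column $P(b) - P(a) = \int_a^b P'(u)\,\ud u$, so such a column is an average of $P'$ over an interval, and likewise a blue/frozen column reduces to a first divided difference of $P'$.

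\emph{Main steps.} First I would record the exact expression for the columns of $\ud G_\sigma$ in each of the three cases, pulling out all the $r$-factors; this is bookkeeping dictated by Definitions \ref{associated mappings} and the relabelling in Section \ref{definitionparameter}, together with the formula for $|\det \partial\varphi/\partial(r,t)|$. Second, I would rewrite the resulting determinant using the identity $P(b)-P(a) = \int_a^b P'$ for the "red $r$-columns" and an analogous integral representation for the pre-blue columns (where, because the $t$-difference $s$ is frozen and small, the relevant column is $P'(t_{l_\nu}) - P'(t_{l_\nu+1}) = -\int_{t_{l_\nu}}^{t_{l_\nu+1}} P''$ or, after another integration, a genuine divided difference), then move all modulus signs inside the integrals — this is legitimate because, by Corollary \ref{separation} and property 1) of Lemma \ref{dendrinosstovall}, the $t_l$ are well-separated and increasing, so the relevant Vandermonde-type factors do not change sign over the domains of integration. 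Third, I would invoke the Dendrinos--Wright geometric inequality (property 1 of Theorem \ref{Dendrinos Wright theorem}) to replace $|J_P(\mathbf t)| = |\det(P'(t_1)\dots P'(t_d))|$ by $\prod_{i}|L_P(t_i)|^{1/d}|V(\mathbf t)| \sim \prod_i t_i^{K/d}\,|V(\mathbf t)|$, using $|L_P(t)| \sim t^K$ on $I$. Finally, I would plug in the lower bounds for the various increments: $t_l - t_{l'} \gtrsim \alpha\, t_{l'}^{-2K/d(d+1)}$ between consecutive non-blue indices (Corollary \ref{separation} i)), $|r_{j/2} - r_{j/2-1}| \gtrsim (\beta/\alpha)^{1/2}$ at each pre-blue floor (Lemma \ref{towerlem}), and $t_j - t_{j-1} \gtrsim (\alpha\beta)^{1/2}t_{j-1}^{-2K/d(d+1)}$ at each pre-red even floor (Lemma \ref{towerlem}); removing small neighbourhoods of the endpoints from each integral (as in the $d=3$ model computation in Section \ref{overview of refinement method}) converts the integrals of $|V|$ into powers of these increments. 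Counting the exponents of $\alpha$, of $(\beta/\alpha)^{1/2}$, and of $\prod_l \tau_l^{2K/d(d+1)}$ using the table \eqref{counting variables} then yields precisely $\alpha^{d(d+1)/2 - M}(\beta/\alpha)^{(m+n-\eta)/2}\prod_l \tau_l^{2K/d(d+1)}$.

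\emph{The main obstacle.} The hardest part is the exponent bookkeeping: one must check that, after expanding the determinant and applying the geometric inequality, the total power of $\alpha$ contributed by the $\binom{M}{2}$-ish Vandermonde factors among the non-blue nodes, minus the "savings" at red floors where one gains an extra $(\alpha\beta)^{1/2}$ rather than $\alpha$, and the $n$ factors of $(\beta/\alpha)^{1/2}$ from the blue dilation gaps, combine to give exactly the claimed exponents in all three cases simultaneously. This is where the precise definition of $N$ in \eqref{defn N} and the case table \eqref{counting variables} must be used; the subtlety is that $\eta = 1$ in $\mathrm{Case}(2,d)$ reflects the fact that one red floor is "used up" producing the extra column needed to go from a type-2 to an effective type-1 parametrisation (the $\Omega_j^*$ construction), so one of the $(\alpha\beta)^{1/2}$ gains is not available. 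A secondary technical point is justifying that all the sign conditions needed to move moduli inside the integrals genuinely hold on $\omega(\sigma)$ — this follows from monotonicity of $t_1 < \dots < t_N$ guaranteed by Lemma \ref{dendrinosstovall}, but must be stated carefully when divided differences of $P'$ (rather than of $P$) appear at the blue columns.
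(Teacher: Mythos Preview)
Your outline captures the overall architecture correctly --- write the Jacobian as an integral of $J_P$, apply the Dendrinos--Wright geometric inequality, and count exponents --- but it misidentifies the columns arising at the blue indices, and as a consequence misses the central technical difficulty of the proof.

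Concretely: at a blue index the surviving variables are $\rho_{\nu_j}$ and $\tau_{\nu_j}=t_{l_{\nu_j}}$, with the frozen variable being $t_{l_{\nu_j}+1}=\tau_{\nu_j}+s_{\nu_j}(\tau,\sigma)$. The $\rho_{\nu_j}$-column is again a difference $P(\tau_{\nu_j}+s_{\nu_j})-P(\tau_{\nu_{j}+1})$, exactly like a red $r$-column. The subtle column is $\partial G_\sigma/\partial\tau_{\nu_j}$, and it is \emph{not} $P'(t_{l_{\nu_j}})-P'(t_{l_{\nu_j}+1})$ as you write. Because the two copies of $P$ containing $\tau_{\nu_j}$ carry \emph{different} dilation factors $\rho_{\nu_j}$ and $\rho^*_{\nu_j-1}$, and because $s_{\nu_j}$ depends on $\tau_{\nu_j}$, one obtains
\[
\frac{\partial G_\sigma}{\partial\tau_{\nu_j}}
=(\rho_{\nu_j}-\rho^*_{\nu_j-1})P'(\tau_{\nu_j})
+\rho_{\nu_j}\bigl(P'(\tau_{\nu_j}+s_{\nu_j})-P'(\tau_{\nu_j})\bigr)
-\tfrac{2K}{d(d+1)}\tfrac{s_{\nu_j}}{\tau_{\nu_j}}\rho_{\nu_j}P'(\tau_{\nu_j}+s_{\nu_j}).
\]
The main term is the \emph{first} one, and it is this $(\rho_{\nu_j}-\rho^*_{\nu_j-1})$ factor that delivers the $(\beta/\alpha)^{1/2}$ gain you want. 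Your proposed column $P'-P'$ is in fact one of the two \emph{error} terms here; taken on its own it carries a factor $s_{\nu_j}\lesssim\delta(\alpha\beta)^{1/2}\tau_{\nu_j}^{-2K/d(d+1)}$, which is an upper bound and gives no useful lower bound on the determinant.

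The real work in the paper's proof is therefore showing that, after expanding the determinant by multilinearity over all choices of main/error term at each $\nu_j$, every term containing at least one error column is dominated by $\delta$ times the pure main term $\Delta(\rho)J_P(\tau,x)$. This is where Proposition~\ref{Stovall's observation} (Stovall's pointwise bound on $\prod_{j\in S}\partial_{t_j}J_P$ in terms of $J_P$) is essential, and it is also why the small parameter $\delta$ in Lemma~\ref{towerlem} is needed at all. Your sketch does not mention this error analysis. A second omission is that the separation you invoke to fix signs does not automatically hold between $\tau_{\nu}$ and the integration variables $x_l$; the paper handles this by a truncation argument (Lemma~\ref{polylem}) that removes an $\epsilon$-boundary layer from the rectangle $R(\tau)\times B_\sigma(\tau)$ at controlled cost.
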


Theorem \ref{bigthm} is a direct consequence of Lemma \ref{jaclem1}.

\begin{proof}[Proof (of Theorem \ref{bigthm}, assuming Lemma \ref{jaclem1})] To prove Theorem \ref{bigthm} it suffices to show the estimate \eqref{equivweake} holds in both $\mathrm{Case}(1, d)$ and $\mathrm{Case}(2,d)$ and \eqref{equivweakf} holds in $\mathrm{Case}(2, d+1)$. Indeed, recall both \eqref{equivweake} and \eqref{equivweakf} are equivalent to the desired endpoint restricted weak-type $(p_1, q_1)$ inequality \eqref{weak} for $A$. For notational convenience, let 
\begin{equation*}
|X| := \left\{ \begin{array}{ll}
|E| & \textrm{if either $\mathrm{Case}(1, d)$ or $\mathrm{Case}(2, d)$ holds} \\
|F| & \textrm{if $\mathrm{Case}(2, d+1)$ holds}
\end{array}\right. .
\end{equation*}

Apply Lemma \ref{jaclem1} to each $\sigma \in W$ together with the Multiplicity Lemma to deduce in all cases
\begin{eqnarray*}
|X|	&\gtrsim& \int_{\omega(\sigma)} |J_{\sigma}(\rho, \tau)| \,\ud \rho\ud \tau \\
&\gtrsim& \alpha^{d(d+1)/2 - M}(\beta / \alpha )^{(m+n - \eta)/2}\int_{\omega(\sigma)} \prod_{k=1}^M \tau_k^{2K/d(d+1)} \ud \rho\ud \tau .
\end{eqnarray*}
Integrating both sides of the preceding inequality over $W$, it follows that
\begin{equation}\label{wkpf2}
 (\alpha\beta)^{n/2} |X| \gtrsim \alpha^{d(d+1)/2 - M}(\beta / \alpha )^{(m+n - \eta)/2} \int_{\varphi(\Omega)} \prod_{k=1}^M \tau_k^{2K/d(d+1)} \,\ud \rho \ud \tau \ud \sigma
\end{equation}
where $\varphi$ is the map defined in \eqref{phi change of variables}. By a change of variables, the integral on the right-hand side of \eqref{wkpf2} can be written as
\begin{eqnarray*}
\int_{\varphi(\Omega)} \prod_{k=1}^M \tau_k^{2K/d(d+1)} \,\ud \rho \ud \tau \ud \sigma &=&  \int_{\Omega} \prod_{k=1}^M t_{l_k}^{2K/d(d+1)}\left| \det \frac{\partial \varphi}{\partial (r, t)}(r,t) \right| \,\ud r \ud t \\
&=& \int_{\Omega} \prod_{k=1}^M t_{l_k}^{2K/d(d+1)}\prod_{j=1}^{n} t_{l_{\nu_j}}^{2K/d(d+1)}\,\ud r \ud t.
\end{eqnarray*}
Arguing as in the last step of the proof of Lemma \ref{towerlem}, one may deduce $t_{l_{\nu_j}} \sim t_{l_{\nu_j}+1}$ for $1 \leq j \leq n$ provided that the parameter $\delta$ from Lemma \ref{towerlem} is chosen to be sufficiently small (depending only on the degree of $P$ and $d$). The previous expression is therefore bounded below by a constant multiple of
\begin{equation*}
\int_{\Omega} \prod_{k=1}^N t_{k}^{2K/d(d+1)}\,\ud r \ud t.
\end{equation*}
Applying Fubini's theorem and the estimates for the $\mu_P$-measure of the fibres of the $\Omega_j$, one may easily deduce the above integral is at least a constant multiple of $\alpha^N(\beta/\alpha)^{\floor{N/2}}$. Whence, combining these observations and multiplying both sides of \eqref{wkpf2} by $\alpha^{-n}(\beta/\alpha)^{-n/2}$, one arrives at the estimate 
\begin{equation*}
|X| \gtrsim \alpha^{d(d+1)/2 - M + N-n}(\beta / \alpha )^{(m-\eta)/2 + \floor{N/2}}.
\end{equation*}
Recalling $M=N-n$ and \eqref{counting variables}, this is easily seen to be the desired estimate. 
\end{proof}

To complete the proof of Theorem \ref{weakthm} it remains to prove Lemma \ref{jaclem1}.

\section{The proof of the Jacobian estimates: $\mathrm{Case}(1,d)$}

In the previous section the proof of the main theorem was reduced to establishing the pointwise estimates for the Jacobian function described in Lemma \ref{jaclem1}. Here the proof of Lemma \ref{jaclem1} in $\mathrm{Case}(1,d)$ is discussed in detail. The same arguments can be adapted to treat the remaining cases, as demonstrated in the following section. 

\begin{proof}[Proof (of Lemma \ref{jaclem1}, assuming $\mathrm{Case}(1,d)$ holds)] The arguments here, which are based primarily on those of \cite{Christ1998, Stovall2010}, are somewhat lengthy; it is convenient, therefore, to present the proof as a series of steps.

\subsection*{Compute the Jacobian matrix.} Recalling the definition of the mapping $\Phi := \Phi_N$, one may use the established index notation to express $\Phi \circ \varphi^{-1}$ as 
\begin{eqnarray*}
\Phi\circ \varphi^{-1}(\rho, \tau, \sigma) &=& x_0 -r_0P(\tau_1) + \sum_{i=1}^m \rho_{\mu_i}\big( P(\tau_{\mu_i}) - P(\tau_{\mu_i +1}) \big) \\
&& + \sum_{j=1}^n \rho_{\nu_j}\big( P(\tau_{\nu_j}+ s_{\nu_j}(\tau,\sigma)) - P(\tau_{\nu_j +1}) \big).
\end{eqnarray*}
One immediately deduces that
\begin{equation*}
\frac{\partial G_{\sigma}}{\partial \rho_{\mu_i}} (\rho, \tau) = P(\tau_{\mu_i}) - P(\tau_{\mu_i +1}) \qquad \textrm{for $i = 1, \dots, m$}
\end{equation*}
whilst 
\begin{equation*}
\frac{\partial G_{\sigma}}{\partial \rho_{\nu_j}} (\rho, \tau) = P(\tau_{\nu_j} + s_{\nu_j}(\tau,\sigma)) - P(\tau_{\nu_j +1}) \qquad \textrm{for $j = 1, \dots, n$}
\end{equation*}
which identifies $m+n$ of the columns of the Jacobian matrix. The remaining columns correspond to differentiation with respect to the $\tau$ variables and are readily computed by expressing $\Phi\circ \varphi^{-1}$ as
\begin{eqnarray*}
\Phi\circ \varphi^{-1}(\rho, \tau, \sigma) &=& x_0 + \sum_{i=1}^m \big( \rho_{\mu_i}P(\tau_{\mu_i}) - \rho^*_{\mu_i-1}P(\tau_{\mu_i - 1})\big) \\
&&+ \sum_{j=1}^n \big( \rho_{\nu_j}P(\tau_{\nu_j} + s_{\nu_j}(\tau,\sigma)) - \rho^*_{\nu_j-1}P(\tau_{\nu_j})\big) - \rho^*_MP(\tau_M)
\end{eqnarray*}
where the $\rho^*_{\mu_i-1}$, $\rho^*_{\nu_j-1}$ and $\rho^*_M$ are defined in the obvious manner; for instance, if $\rho_{\nu_j}$ corresponds to the parameter $r_k$ via the change of variables $\varphi$, then $\rho^*_{\nu_j-1}$ is understood to correspond to the parameter $r_{k-1}$. Thus for $i=1, \dots, m$ one has
\begin{equation*}
\frac{\partial G_{\sigma}}{\partial \tau_{\mu_i}} (\rho, \tau) =  \rho_{\mu_i}P'(\tau_{\mu_i}) \quad \textrm{and} \quad \frac{\partial G_{\sigma}}{\partial \tau_{\mu_i - 1}} (\rho, \tau) =  - \rho^*_{\mu_i-1}P'(\tau_{\mu_i - 1})
\end{equation*}
whilst
\begin{equation*}
\frac{\partial G_{\sigma}}{\partial \tau_{M}} (\rho, \tau) = - \rho^*_MP'(\tau_M),
\end{equation*}
accounting for a further $2m+1$ columns to the Jacobian matrix. To compute the remaining $n$ columns differentiate $G_{\sigma}$ with respect to the $\tau_{\nu_j}$ to give
\begin{eqnarray}\label{taunucolumn}
\frac{\partial G_{\sigma}}{\partial \tau_{\nu_j}} (\rho, \tau) &=& \rho_{\nu_j}P'(\tau_{\nu_j}+ s_{\nu_j}(\tau, \sigma)) - \rho^*_{\nu_j-1}P'(\tau_{\nu_j}) \\
\nonumber
&& - \frac{2K}{d(d+1)} \frac{s_{\nu_j}(\tau, \sigma)}{\tau_{\nu_j}}\rho_{\nu_j}P'(\tau_{\nu_j}+ s_{\nu_j}(\tau, \sigma))
\end{eqnarray}
for $j=1, \dots, n$. 

\subsection*{Compare $J_{\sigma}$ with $J_P$.} The estimation of the Jacobian $J_{\sigma}$ will be achieved by comparing it to the more tractable expression $J_P$, introduced in \eqref{JP}. Once such a comparison is established, $J_{\sigma}$ can then be bounded by means of the geometric inequality of Dendrinos and Wright (that is, Theorem \ref{Dendrinos Wright theorem}). This inequality is guaranteed to hold in the appropriate setting due to the reductions made earlier in the article. 

To begin, express the Jacobian determinant in the form of an integral
\begin{equation*}
J_{\sigma}(\rho, \tau)= \pm\int_{R(\tau) \times B_{\sigma}(\tau)} \wp_{\sigma}(\rho, \tau, x) \,\ud x
\end{equation*}
where $\wp_{\sigma}(\rho, \tau, x)$ is a multi-variate polynomial and 
\begin{equation*}
R(\tau) := \prod_{i=1}^{m} (\tau_{\mu_i}, \tau_{\mu_i+1}) \quad \textrm{and} \quad
B_{\sigma}(\tau) := \prod_{i=1}^{n} (\tau_{\nu_j} + s_{\nu_j}, \tau_{\nu_j+1}) 
\end{equation*}
are rectangles.\footnote{For notational convenience the dependence of $s_{\nu}$ on $(\tau, \sigma)$ has been suppressed.} The polynomial $\wp_{\sigma}(\rho, \tau, x)$ is the product of $C(\rho) = \rho_M^* \prod_{i=1}^m \rho_{\mu_i - 1}^* \rho_{\mu_i}$ and the determinant of the matrix $A_{\sigma}(\rho, \tau, x)$ obtained from original Jacobian matrix by making the following changes:
\begin{itemize}
\item The column $P(\tau_{\mu_i}) - P(\tau_{\mu_i +1})$ is replaced with $P'(x_i)$ for $i=1, \dots, m$.
\item The column $P(\tau_{\nu_j}+ s_{\nu_j}) - P(\tau_{\nu_j +1})$ is replaced with $P'(x_{m+j})$ for $j = 1, \dots, n$.
\item The columns $\rho_{\mu_i}P'(\tau_{\mu_i})$ and $- \rho^*_{\mu_i-1}P'(\tau_{\mu_i - 1})$ are replaced with $P'(\tau_{\mu_i})$ and $P'(\tau_{\mu_i - 1})$, respectively, for all $i = 1, \dots, m$. In addition, $- \rho^*_MP'(\tau_M)$ is replaced with and $P'(\tau_M)$.
\item The remaining columns $n$ are unaltered; in other words, they agree with the corresponding columns of the Jacobian matrix.
\end{itemize}

Notice the unaltered columns are those corresponding to differentiation by $\tau_{\nu_j}$ and are of the form given in \eqref{taunucolumn}. Each may be expressed as the sum of three terms
\begin{equation}\label{taylor1}
\frac{\partial G_{\sigma}}{\partial \tau_{\nu_j}} (\rho, \tau) = \sum_{i=1}^3 T^i_{\sigma, j}(\rho,\tau)
\end{equation}
where, writing $c := - 2K/d(d+1)$, 
\begin{eqnarray*}
T^1_{\sigma, j}(\rho,\tau) &:=& (\rho_{\nu_j} - \rho^*_{\nu_j-1})P'(\tau_{\nu_j}), \\
T^2_{\sigma, j}(\rho,\tau) &:=& c \frac{s_{\nu_j}(\tau, \sigma)}{\tau_{\nu_j}}\rho_{\nu_j}P'(\tau_{\nu_j}+ s_{\nu_j}(\tau, \sigma)), \\
T^3_{\sigma, j}(\rho,\tau) &:=& \rho_{\nu_j}(P'(\tau_{\nu_j}+ s_{\nu_j}(\tau, \sigma)) - P'(\tau_{\nu_j})). 
\end{eqnarray*}
The multi-linearity of the determinant and \eqref{taylor1} are now applied to express $\det A_{\sigma}(\rho, \tau, x)$ as a sum of determinants of more elementary matrices. In order to present concisely the resulting expression it is useful to introduce some notation. In particular, for $S \subseteq \mathcal{N} := \{\nu_1, \dots, \nu_n\}$, let $\Delta_S$ denote the function of $\rho$ given by 
\begin{equation*}
\Delta_S(\rho) := \prod_{\nu \in S} (\rho_{\nu} - \rho^*_{\nu - 1})
\end{equation*}
and $R_{\sigma, S}(\tau) \subset \R^{\#S}$ the rectangle
\begin{equation*}
R_{\sigma, S}(\tau) := \prod_{\nu \in S} (\tau_{\nu}, \tau_{\nu}+ s_{\nu}).
\end{equation*}

With this notation $\det A_{\sigma}(\rho, \tau, x)$ equals 
\begin{equation}\label{integrand1}
\sum_{\mathcal{S}} \Delta_{S_1}(\rho) \bigg(\prod_{\nu \in S_2}\frac{c \rho_{\nu}s_{\nu}}{\tau_{\nu}}\bigg)\bigg( \prod_{\nu \in S_3} \rho_{\nu}\bigg) \int_{R_{\sigma, S_3}(\tau)} \bigg(\prod_{\nu \in S_3}\frac{\partial}{\partial y_{\nu}}\bigg) J_P(\xi_{\mathcal{S}}(y), x)\,\ud y,
\end{equation}
at least up to a sign, where the sum ranges over all partitions $\mathcal{S}:=(S_1, S_2, S_3)$ of $\mathcal{N}$ and for any such partition $\xi_{\mathcal{S}}(y) = (\xi_{\mathcal{S},l}(\tau,\sigma, y))_{l=1}^M$ is defined by
\begin{equation*}
\xi_{\mathcal{S},l}(\tau, \sigma,y) := \left\{\begin{array}{ll}
\tau_{l} + s_{l} & \textrm{if $l \in S_2$,}\\
y_{l} & \textrm{if $l \in S_3$,} \\
\tau_{l} & \textrm{otherwise.}
\end{array} \right.
\end{equation*}
If $S_3 = \emptyset$, then the integral appearing in \eqref{integrand1} is interpreted as $J_P(\xi_{\mathcal{S}}, x)$.

The term of the sum in \eqref{integrand1} corresponding to the unique partition for which $S_1 = \mathcal{N}$ is simply $\Delta(\rho)J_P(\tau, x)$ where $\Delta(\rho) := \Delta_{\mathcal{N}}(\rho)$; the sum of the remaining terms is denoted by $E_{\sigma}(\rho, \tau, x)$. Thus, \eqref{integrand1} equals
\begin{equation}\label{error}
\Delta(\rho)J_P(\tau, x) + E_{\sigma}(\rho, \tau, x).
\end{equation}
In conclusion, the Jacobian $J_{\sigma}$ can be expressed in terms of (an integral of) the function $J_P$ together with some error term.

\subsection*{Control the error.} It will be shown that provided that $\delta$ is chosen sufficiently small, depending only on $d$ and $\deg P$, the right-hand summand of \eqref{error} is subordinate to the left-hand summand. Only a bounded number of terms of \eqref{integrand1} are non-zero and the error is therefore a sum of $O(1)$ terms which will be estimated individually. 

By the properties of the parameter tower, $s_{\nu} \lesssim \delta (\beta/\alpha)\tau_{\nu}$ for all $\nu \in \mathcal{N}$. Hence, for any $S_2 \subseteq \mathcal{N}$ one has
\begin{equation}\label{error estimate 1}
\prod_{\nu \in S_2}\frac{c \rho_{\nu}s_{\nu}}{\tau_{\nu}} \lesssim \delta^{\#S_2}(\beta/\alpha)^{\#S_2} \lesssim \delta^{\#S_2} \Delta_{S_2}(\rho)
\end{equation}
where the final inequality is due to the definition of the blue indices. A suitable error bound would follow from a similar estimate for each of the integrals appearing in \eqref{integrand1}. In particular, fixing some partition $\mathcal{S} = (S_1, S_2, S_3)$ of $\mathcal{N}$, it suffices to prove
\begin{equation}\label{error estimate 2}
\int_{R_{\sigma, S_3}(\tau)} \left|\left(\prod_{\nu \in S_3}\frac{\partial}{\partial y_{\nu}}\right) J_P(\xi_{\mathcal{S}}(y), x)\right|\,\ud y \lesssim \delta^{\#S_3} \Delta_{S_3}(\rho)|J_P(\tau, x)|.
\end{equation} 
Indeed, once \eqref{error estimate 2} is established, the error bound 
\begin{eqnarray}\label{errorest1}
|E_{\sigma}(\rho, \tau, x)| &\lesssim&  \bigg(\sum_{\substack{\mathcal{S} = (S_1, S_2, S_3) \\ S_1 \neq \mathcal{N}}} \delta^{\#S_2 + \#S_3} \prod_{j=1}^3\Delta_{S_j}(\rho)\bigg) |J_P(\tau, x)| \\
\nonumber
&\lesssim& \delta \Delta(\rho)|J_P(\tau, x)|
\end{eqnarray}
immediately follows, noting the factor $\prod_{\nu \in S_3} \rho_{\nu}$ from \eqref{integrand1} is $O(1)$ whenever it appears in a non-zero term of the sum.

If $S_3 = \emptyset$, then \eqref{error estimate 2} is trivial. Fix a partition $\mathcal{S}$ as above with $S_3$ non-empty and some $y \in R_{\sigma, S_3}(\tau)$ and consider the ratio
\begin{equation}\label{ratio1}
\left| \frac{\left(\prod_{\nu \in S_3}\frac{\partial}{\partial y_{\nu}}\right)J_P(\xi_{\mathcal{S}}(y), x)}{J_P(\xi_{\mathcal{S}}(y), x)} \right|.
\end{equation}
For notational convenience write $\xi = (\xi_1, \dots, \xi_n) := \xi_{\mathcal{S}}(y)$. Using the derivative estimate from Proposition \ref{Stovall's observation} one may bound \eqref{ratio1} by a linear combination of $O(1)$ terms (with $O(1)$ coefficients) of the form
\begin{equation}\label{complicated thing}
\bigg(\prod_{{\nu} \in T_1} y_{\nu}^{-1}\bigg)\bigg( \prod_{{\nu} \in T_2} y_{\nu}^{-\epsilon(\nu)}|y_{\nu} - \xi_{u({\nu})}|^{\epsilon(\nu) - 1}\bigg)\bigg( \prod_{{\nu} \in T_3}y_{\nu}^{-\epsilon(\nu)}|y_{\nu} - x_{v({\nu})}|^{\epsilon(\nu) - 1}\bigg)
\end{equation}
where:
\begin{itemize}
\item $(T_1, T_2, T_3)$ is a partition of $S_3$;
\item $u \colon T_2 \to \{1, \dots, M\}$ is a function with the property $u(j) \neq j$ for all $j \in T_2$; 
\item $v \colon T_3 \to \{1, \dots, n+m\}$ (with no additional conditions) and
\item $\epsilon \colon T_2 \cup T_3 \to \{0,1\}$. 
\end{itemize}  

To prove \eqref{error estimate 2} it therefore suffices to establish a suitable bound for the integral of the product of \eqref{complicated thing} and $|J_P(\xi, x)|$ over the set $R_{\sigma, S_3}(\tau)$. The first step is to estimate \eqref{complicated thing} by applying the following observations.

\begin{enumerate}[i)]
\item Given $y \in R_{\sigma, S_3}(\tau)$, by the definition of the parameter tower the estimates
\begin{eqnarray*}
y_{\nu} &\geq& \tau_{\nu} = \tau_{\nu}^{1/\kappa} \tau_{\nu}^{-2K/d(d+1)} \gtrsim \alpha\tau_{\nu}^{-2K/d(d+1)};\\
|y_{\nu} - \xi_{u({\nu})}| &\geq& |\tau_{\nu} - \tau_{u({\nu})}| - \delta \alpha (\tau_{\nu}^{-2K/d(d+1)} + \tau_{u({\nu})}^{-2K/d(d+1)}); \\
|y_{\nu} - x_{v({\nu})}| &\geq& |\tau_{\nu} - x_{v({\nu})}| - \delta \alpha \tau_{\nu}^{-2K/d(d+1)}
\end{eqnarray*}
hold for ${\nu} \in \mathcal{N}$.
\item Since the indices $l_{\nu}$ for $\nu \in \mathcal{N}$ are those that directly precede a blue index (and so $l_{\nu}$ is odd), Corollary \ref{separation} ensures $\tau_{\nu} - \tau_u \gtrsim \alpha \tau_u^{-2K/d(d+1)}$ for all $1 \leq u < \nu$. Moreover, the ordering of the variables then guarantees
\begin{equation*}
\tau_{\nu} - \tau_u \gtrsim \alpha \tau_{\nu}^{-2K/d(d+1)} \qquad \textrm{whenever $1 \leq u < \nu$}. 
\end{equation*}
\item On the other hand, since the labelling $l_k$ omits the blue indices, for any $\nu \in \mathcal{N}$ and $\nu < u \leq M$ one must have $l - l_{\nu} \geq 2$ where $l$ is the index such that $\tau_u = t_l$. Consequently, by applying Corollary \ref{separation} in this case one concludes that
\begin{equation*}
\tau_u-\tau_{\nu} \gtrsim \alpha \tau_{\nu}^{-2K/d(d+1)} \qquad \textrm{whenever $\nu < u \leq M$}.
\end{equation*}
\end{enumerate}
Combining these observations one immediately deduces that
\begin{equation*}
|y_{\nu} - \xi_{u(\nu)}| \gtrsim\alpha \tau_{\nu}^{-2K/d(d+1)}
\end{equation*}
for all $\nu \in \mathcal{N}$, provided that $\delta$ is chosen initially to be sufficiently small in the earlier application of Lemma \ref{towerlem}.

It would be useful to have a similar bound for the terms $|y_{\nu} - x_l|$. At present such an estimate is not possible due to the potential lack of separation between the $\tau_{\nu}$ and $x_l$ variables. To remedy this, temporarily assume the addition separation hypothesis 
\begin{equation}\label{sephypo1}
|\tau_{\nu} - x_l| \gtrsim \alpha\tau_{\nu}^{-2K/d(d+1)} 
\end{equation}
for all $\nu \in \mathcal{N}$ and all $1 \leq l \leq m+n$. Presently it is shown that this separation hypothesis leads to desirable control over the error term $E_{\sigma}(\rho, \tau, x)$; the following step is then to modify the existing set-up so that \eqref{sephypo1} indeed holds without the need of additional assumptions. 

The preceding discussion, together with the identity $|R_{\sigma, S}(\tau)| = \prod_{\nu \in S} s_{\nu}$, implies \eqref{complicated thing} is controlled by 
\begin{eqnarray*}
\alpha^{-\# S_3} \prod_{\nu \in S_3} \tau_{\nu}^{2K/d(d+1)} &\lesssim& \delta^{\#S_3} (\beta / \alpha)^{\#S_3/2}  |R_{\sigma, S_3}(\tau)|^{-1}\\
&\lesssim& \delta^{\#S_3} |\Delta_{S_3}(\rho)||R_{\sigma, S_3}(\tau)|^{-1}
\end{eqnarray*}
 provided that $\delta$ is chosen to be sufficiently small. Observe, both of the above inequalities are simple consequences of the definition of the blue indices. Consequently, the left-hand side of \eqref{error estimate 2} may be bounded by
\begin{equation*}
 \delta^{\#S_3} \Delta_{S_3}(\rho)\frac{1}{|R_{\sigma, S_3}(\tau)|}\int_{R_{\sigma, S_3}(\tau)}|J_P(\xi_{\mathcal{S}}, x)|\,\ud y
\end{equation*}
and so \eqref{error estimate 2}, and thence \eqref{errorest1}, would follow if 
\begin{equation*}
|J_P(\xi(y), x)| \sim |J_P(\tau, x)| \qquad \textrm{ for all $y \in R_{\sigma, S_3}(\tau)$.}
\end{equation*}
This approximation is readily deduced by combining Proposition \ref{Stovall's observation} with Gr\"onwall's inequality (for a proof of Gr\"onwall's inequality see, for instance, \cite[Chapter 1]{Tao2006}). 

Hence, the estimate \eqref{errorest1} is established under the assumption of the separation hypothesis \eqref{sephypo1}.

\subsection*{Enforce separation.} In the previous section it was shown if \eqref{sephypo1} were to hold for each $\nu \in \mathcal{N}$ uniformly over all $x = (x_1, \dots, x_{m+n}) \in R_{\sigma, S_3}(\tau)$, then by choosing $0< \delta \ll 1$ sufficiently small one may control the integrand by the easily-understood function $|\Delta(\rho)|  |J_P(\tau, x)|$. Clearly for fixed $i$ the estimate \eqref{sephypo1} cannot hold for at least one value of $l$, since as $x$ varies over $R(\tau)\times B_{\sigma}(\tau)$ some $x_l$ can stray close to $\tau_{\nu_i}$ in the boundary regions. To remedy this problem one simply removes a suitable small portion of $R(\tau) \times B_{\sigma}(\tau)$ from the boundary, observing that this can be done without greatly diminishing the size of the integral to be estimated. Given $0< \epsilon < 1/2$, $ 1 \leq i \leq m$ and $1 \leq j \leq n$, define the $\epsilon$-truncate of $R_i(\tau) := (\tau_{\mu_i}, \tau_{\mu_i+1})$ and $B_{\sigma, j}(\tau) := (\tau_{\nu_j} + s_{\nu_j}, \tau_{\nu_j+1})$ by
\begin{equation*}
R^{\epsilon}_i(\tau) := (\tau_{\mu_i} +\epsilon |R_i(\tau)|, \tau_{\mu_i+1}- \epsilon |R_i(\tau)|)
\end{equation*}
and
\begin{equation*}
B^{\epsilon}_{\sigma, j}(\tau) := (\tau_{\nu_j} + s_{\nu_j} +\epsilon |B_{\sigma, j}(\tau)|, \tau_{\nu_j+1}- \epsilon |B_{\sigma, j}(\tau)|),
\end{equation*} 
respectively. Moreover, define the $\epsilon$-truncates of the associated rectangles to be $R^{\epsilon}(\tau) := \prod_{i=1}^m R^{\epsilon}_i(\tau)$ and $B^{\epsilon}_{\sigma}(\tau) := \prod_{j=1}^n B^{\epsilon}_{\sigma, j}(\tau)$. Lemma \ref{polylem} below establishes the existence of some constant $0< c_0 < 1/2$, depending only on $d$ and $\deg P$, such that
\begin{equation}\label{jacoint1}
|J_{\sigma}(\rho, \tau)| \geq \bigg|\int_{D(\tau)} \wp_{\sigma}(\rho, \tau, x) \,\ud x\bigg| - \frac{1}{2} \int_{D(\tau)} |\wp_{\sigma}(\rho, \tau, x)| \,\ud x.
\end{equation}
where $D(\tau) := R^{c_0}(\tau) \times B^{c_0}_{\sigma}(\tau)$. 

It is easy to show that for all $x \in D(\tau)$ the condition \eqref{sephypo1} holds with a uniform constant. Observe
\begin{equation*}
|B_{\sigma, j}(\tau)| = \tau_{\nu_{j+1}} - (\tau_{\nu_j} + s_{\nu_j}) = t_{l_{(\nu_j +1)}} - t_{(l_{\nu_j} +1)},
\end{equation*}
where the brackets in the subscript are included to aid the clarity of exposition. Since $l_{\nu_j} +1$ is, by definition, a blue index it follows that $l_{(\nu_j +1)}$ is odd and, consequently, 
\begin{equation*}
|B_{\sigma, j}(\tau)| \gtrsim \alpha t_{(l_{\nu_j} +1)}^{-2K/d(d+1)} = \alpha (\tau_{\nu_j} + s_{\nu_j})^{-2K/d(d+1)}
\end{equation*}
by Corollary \ref{separation} part i). Futhermore, recalling $s_{\nu_j} \lesssim \delta (\beta/\alpha)\tau_{\nu_j} \lesssim \tau_{\nu_j}$, it follows that
\begin{equation}\label{B length}
|B_{\sigma, j}(\tau)| \gtrsim \alpha \tau_{\nu_j}^{-2K/d(d+1)}.
\end{equation}
Now suppose $x_l \in B^{c_0}_{\sigma, j_0}(\tau)$ for some fixed $j_0 \in \{1, \dots, n\}$. It is clear from the definition of the parameter domain that if $j \neq j_0$, then \eqref{sephypo1} holds for $\nu = \nu_j$. Similarly, if $x_l \in R^{c_0}_{i_0}(\tau)$ for some fixed $i_0 \in \{1, \dots, m\}$, then \eqref{sephypo1} holds for all $\nu \in \mathcal{N}$. It remains to verify \eqref{sephypo1} when $x_l \in B^{c_0}_{\sigma, j_0}(\tau)$ and $j = j_0$, but this is immediate from the definition of the truncation and the bound \eqref{B length}.

Consequently, for $x \in D(\tau)$ and $\delta$ sufficiently small \eqref{errorest1} holds and thus the estimate
\begin{equation}\label{polynomial estimate}
|\wp_{\sigma}(\rho, \tau, x)| \gtrsim |\Delta(\rho)||J_P(\tau, x)|
\end{equation}
is valid on $D(\tau)$. Furthermore, it is claimed that as $x$ varies over $D(\tau)$ the sign of $\wp_{\sigma}(\rho, \tau, x)$ is unchanged. Once this observation is established the right-hand side of \eqref{jacoint1} can be written as
\begin{equation*}
\frac{1}{2} \int_{D(\tau)} |\wp_{\sigma}(\rho, \tau, x)| \,\ud x \gtrsim |\Delta(\rho) |\int_{D(\tau)}| J_P(\tau, x)|\,\ud x
\end{equation*}
To prove the claim, note that the ordering of the components of the $(r,t) \in \Omega$ implies the sign of $V(\tau, x)$ is fixed as $x$ varies over $D(\tau)$; the geometric inequality guaranteed by Theorem \ref{Dendrinos Wright theorem} therefore ensures that the sign of $J_P(\tau, x)$ is also fixed (and is non-zero). The estimate \eqref{polynomial estimate} now implies the claim.

\subsection*{Bound $J_P$ and apply the properties of $\Omega$.} Combining the estimate guaranteed by Theorem \ref{Dendrinos Wright theorem} and the preceding observations one deduces that
\begin{equation*}
|J_{\sigma}(\rho, \tau)| \gtrsim |\Delta(\rho) | \prod_{l=1}^{M}|L_P(\tau_l)|^{1/d} \int_{D(\tau)}\prod_{k=1}^{m+n}|L_P(x_k)|^{1/d} | V(\tau, x)| \,\ud x.
\end{equation*}
Over the domain of integration the estimate 
\begin{equation*}
| V(\tau, x)| \gtrsim \alpha^{d(d-1)/2-M(M-1)/2} |V(\tau)| \prod_{l=1}^M \tau_l^{-K(d-M)/d(d+1)} \prod_{k=1}^{m+n} x_k^{-K(d-1)/d(d+1)}
\end{equation*}
is valid owing to both \eqref{sephypo1} and the additional separation enforced by truncating the set $R(\tau)$. Furthermore, the construction of the (type 1) parameter tower ensures
\begin{equation}\label{Vandermonde estimate}
| V(\tau)| \gtrsim \alpha^{M(M-1)/2} (\beta/\alpha)^{m/2} \prod_{l=1}^M \tau_l^{-K(M-1)/d(d+1)}.
\end{equation}
Since the properties of the blue intervals imply $|\Delta(\rho) |\gtrsim (\beta/\alpha)^{n/2}$, one may combine the preceding inequalities to deduce
\begin{equation}\label{Jacobian bound}
|J_{\sigma}(\rho, \tau)| \gtrsim \alpha^{d(d-1)/2}(\beta/\alpha)^{(m+n)/2} \bigg(\int_{D(\tau)}\prod_{k=1}^{d-M}x_k^{2K/d(d+1)} \,\ud x \bigg)\prod_{l=1}^{M}\tau_l^{2K/d(d+1)} .
\end{equation}
Here the approximation $L_P(t) \sim t^{K}$ has been applied, which was a consequence of the decomposition theorem. 

Finally, the integral on the right-hand side of the above expression is easily seen to satisfy
\begin{equation*}
\int_{D(\tau)}\prod_{k=1}^{d-M}x_k^{2K/d(d+1)} \,\ud x \gtrsim \alpha^{d - M},
\end{equation*}
concluding the proof.


\end{proof}

It remains to state and prove the lemma which justifies the estimate \eqref{jacoint1}. In general, for $0 < \epsilon < 1/2$ the $\epsilon$-truncation $I^{\epsilon}$ of a finite open interval $I = (a, b)$ is defined as $I^{\epsilon} := (a +\epsilon(b-a), b - \epsilon(b-a))$. If $I_1, \dots, I_K$ is a family of finite open intervals, the $\epsilon$-truncation $R^{\epsilon}$ of the associated rectangle $R := \prod_{j=1}^K I_j$ is defined simply by $R^{\epsilon} := \prod_{j=1}^K I_j^{\epsilon}$.

\begin{lem}\label{polylem} Given any $M, K \in \N$ there exists a constant $0 < c_{M,K} < 1/2$ with the following property. For all $0< \epsilon < c_{M,K}$ there exists $C_{M,K}(\epsilon) >0$ such that for any collection $I_1, \dots, I_K$ of finite open intervals with associated rectangle $R$ one has
\begin{equation*}
\int_{R \setminus R^{\epsilon}} |p(x)| \,\ud x \leq C_{M,K}(\epsilon) \int_{R^{\epsilon}} |p(x)| \,\ud x
\end{equation*}
whenever $p$ is a polynomial of degree at most $M$ in $x =(x_1, \dots, x_K)$. Moreover, $\lim_{\epsilon \rightarrow 0} C_{M,K}(\epsilon) = 0$ for any fixed $M, K$.
\end{lem}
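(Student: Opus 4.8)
The plan is to reduce, by an affine change of variables, to the model case of the unit cube, and then to exploit the finite-dimensionality of the space of polynomials of bounded degree together with a compactness argument. Since both the degree and the truncation proportion $\epsilon$ are preserved under the map rescaling each interval $I_j = (a_j, b_j)$ to $(0,1)$ (via $x_j \mapsto a_j + |I_j| y_j$), while both of the integrals in the statement pick up the same Jacobian factor $\prod_{j=1}^K |I_j|$, it suffices to prove the estimate for $R = [0,1]^K$. One may also discard the trivial case $p \equiv 0$.

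Next I would fix $c_{M,K} := 1/4$, so that for $0 < \epsilon < c_{M,K}$ one has the inclusion $[1/4, 3/4]^K \subseteq [\epsilon, 1-\epsilon]^K = R^{\epsilon}$, and hence $\|p\| \leq \int_{R^{\epsilon}} |p|$, where $\|p\| := \int_{[1/4,3/4]^K} |p|$. The quantity $\|\cdot\|$ is a norm on the finite-dimensional space $\mathcal{P}$ of real polynomials of degree at most $M$ in $x = (x_1, \dots, x_K)$ — a polynomial vanishing on a nonempty open set is identically zero — so its unit sphere $S := \{ p \in \mathcal{P} : \|p\| = 1\}$ is compact. Setting
\[
f_{\epsilon}(p) := \int_{[0,1]^K \setminus [\epsilon, 1-\epsilon]^K} |p(x)|\,\ud x, \qquad C_{M,K}(\epsilon) := \sup_{p \in S} f_{\epsilon}(p),
\]
continuity of $f_{\epsilon}$ on $\mathcal{P}$ (all norms on $\mathcal{P}$ are equivalent, so convergence in $\mathcal{P}$ implies uniform convergence on $[0,1]^K$) together with compactness of $S$ gives $C_{M,K}(\epsilon) < \infty$. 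Applying this to $p/\|p\| \in S$ for an arbitrary nonzero $p \in \mathcal{P}$ and using the inclusion above yields $\int_{[0,1]^K \setminus [\epsilon,1-\epsilon]^K}|p| = \|p\| f_{\epsilon}(p/\|p\|) \leq C_{M,K}(\epsilon)\int_{R^{\epsilon}}|p|$, which is the claimed inequality.

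The main obstacle is the decay $C_{M,K}(\epsilon) \to 0$ as $\epsilon \to 0$: a crude estimate using only $|R \setminus R^{\epsilon}| = O(\epsilon)$ fails because a polynomial could, a priori, be large near the boundary, and it is precisely the finite-dimensionality that rules this out. I would argue as follows. For each fixed $p \in S$, the shells $[0,1]^K \setminus [\epsilon, 1-\epsilon]^K$ decrease, as $\epsilon \downarrow 0$, to the boundary of the cube, which is a null set, and $|p|$ is bounded on $[0,1]^K$; hence $f_{\epsilon}(p) \to 0$ by dominated convergence. Moreover $\epsilon \mapsto f_{\epsilon}(p)$ is nondecreasing. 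Thus along any sequence $\epsilon_n \downarrow 0$ the continuous functions $f_{\epsilon_n}$ decrease pointwise to $0$ on the compact set $S$, so Dini's theorem forces $C_{M,K}(\epsilon_n) = \sup_{p \in S} f_{\epsilon_n}(p) \to 0$; monotonicity in $\epsilon$ then upgrades this to $\lim_{\epsilon \to 0} C_{M,K}(\epsilon) = 0$, completing the proof.
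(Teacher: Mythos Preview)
Your proof is correct. Both you and the paper begin with the affine reduction to the unit cube and both ultimately rest on the finite-dimensionality of the space of polynomials of bounded degree, but the executions diverge from there. The paper performs a further inductive reduction to the one-dimensional case $K=1$ and then uses the equivalence of the $L^{\infty}(0,1)$ and $L^{1}(0,1)$ norms to obtain an explicit constant: with $C_M := \sup\{\|p\|_{L^{\infty}(0,1)}/\|p\|_{L^{1}(0,1)}\}$ one gets $C_{M,1}(\epsilon) = 2\epsilon C_M/(1 - 2\epsilon C_M)$, from which the decay as $\epsilon \to 0$ is immediate. Your argument instead stays in $K$ dimensions, normalises by the $L^1$ norm over the fixed inner cube $[1/4,3/4]^K$, and replaces the explicit computation by a compactness/Dini argument. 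The paper's route gives a concrete (and quantitatively useful) constant at the cost of an extra inductive step; your route is cleaner structurally but non-constructive.
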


Once the lemma is established, taking $n,m $ and $M$ to be as defined in the previous proof and $K := m + n$, the inequality \eqref{jacoint1} (at least in $\mathrm{Case}(1,d)$) follows by choosing $c_0$ sufficiently small so that $0 < C_{M,K}(c_0) < 1/2$. 

\begin{proof}[Proof (of Lemma \ref{polylem})] By homogeneity it suffices to consider the case $I_1 = \dots = I_K = (0,1)$ and a simple inductive procedure further reduces the problem to the case $K = 1$. Fixing $M$ and letting $I = (0,1)$, the proof is now a simple consequence of the equivalence of norms on finite-dimensional spaces: if $C_M < \infty$ is defined to be the supremum of the ratio $\|p\|_{L^{\infty}(I)} / \|p\|_{L^1(I)}$ over all polynomials of degree at most $M$, then 
\begin{equation*}
\int_{I \setminus I^{\epsilon}} |p(x)| \,\ud x \leq 2\epsilon C_M \bigg(\int_{I \setminus I^{\epsilon}} |p(x)| \,\ud x + \int_{I^{\epsilon}} |p(x)| \,\ud x \bigg).
\end{equation*}
Provided that $0 < \epsilon < C_M/2$ one may take $C_{M,1}(\epsilon) := 2\epsilon C_M / (1 - 2\epsilon C_M)$, completing the proof. 
\end{proof}

\section{The proof of the Jacobian estimates: $\mathrm{Case}(2,d+1)$ and $\mathrm{Case}(2, d)$}

The argument used to prove Lemma \ref{jaclem1} in $\mathrm{Case}(1,d)$ can easily be adapted to establish the result in the remaining cases. The necessary modifications are sketched below; the precise details are left to the patient reader.

\subsection*{Adapting the arguments to $\mathrm{Case}(2,d+1)$.} To prove the inequality in $\mathrm{Case}(2,d+1)$ only a minor modification of the preceding argument is needed. Notice by the minimality of the parameter $N$ defined in \eqref{defn N} it follows that the index $N$ is red and so $\mu_m = M$. Here $\Phi \circ \varphi^{-1}$ maps into $\R^d \times [1,2]$ and is given by
\begin{equation*}
\Phi \circ \varphi^{-1}(\rho, \tau, \sigma) = \left( \begin{array}{c}
\Psi_N(x_0, r_0; \varphi^{-1}(\rho, \tau, \sigma))\\
\rho_{\mu_m} \end{array}\right)
\end{equation*} 
where
\begin{eqnarray*}
\Psi_N(x_0, r_0; \varphi^{-1}(\rho, \tau, \sigma)) &=& x_0 -r_0P(\tau_1) + \sum_{i=1}^{m-1} \rho_{\mu_i}\big( P(\tau_{\mu_i}) - P(\tau_{\mu_i +1}) \big) \\
 &&+ \sum_{j=1}^n \rho_{\nu_j}\big( P(\tau_{\nu_j}+ s_{\nu_j}) - P(\tau_{\nu_j +1}) \big) + \rho_{\mu_m}P(\tau_{\mu_m}).
\end{eqnarray*}

 The Jacobian matrix is now a $(d+1)\times (d+1)$ matrix. The columns given by differentiating $G_{\sigma}$ with respect to $\rho_{\mu_i}$ are
\begin{equation*}
\left(\begin{array}{c}
P(\tau_{\mu_i}) - P(\tau_{\mu_i+1}) \\
0
\end{array}\right) \ \ \textrm{for $j = 1, \dots, m-1$ and} \ \ \left(\begin{array}{c}
P(\tau_{\mu_m}) \\
1
\end{array}\right).
\end{equation*}
For remaining columns, the first $d$ components are precisely the components of the corresponding columns in the previous case and the $d+1$ component is 0. Expanding the determinant across row $(d+1)$, the methods used earlier in the proof can be applied to deduce 
\begin{equation*}
J_{\sigma}(\rho, \tau) = \pm\int_{R(\tau)\times B_{\sigma}(\tau)} \wp_{\sigma}(\rho, \tau, x) \,\ud x
\end{equation*}
where $\wp_{\sigma}(\rho, \tau, x) $ is the determinant of a $d \times d$ matrix and 
\begin{equation*}
R(\tau) := \prod_{i=1}^{m-1} (\tau_{\mu_i}, \tau_{\mu_i+1}); \qquad B_{\sigma}(\tau) := \prod_{j=1}^n (\tau_{\nu_{j}} + s_{\nu_j}, \tau_{\nu_{j}+1}).
\end{equation*}
The key difference is now the integral is over a rectangle of dimension $m+n -1$ (rather than $m+n$). Define the truncated domain $D(\tau)$ in analogous manner to the previous case. Notice from \eqref{counting variables} it follows that $d-M = m+n -1$, which is precisely the dimension of the set $D(\tau)$ in the present situation. Arguing as before, the inequality \eqref{Jacobian bound} also holds in this setting and from this one obtains the required estimate.  

\subsection*{Adapting the argument to $\mathrm{Case}(2, d)$.}

Here the map $\Phi \circ \varphi^{-1}$ is given by 
\begin{eqnarray*}
\Phi\circ \varphi^{-1}(\rho, \tau, \sigma) &=& y_0 + \sum_{i=1}^m \rho_{\mu_i}\big( P(\tau_{\mu_i}) - P(\tau_{\mu_i +1}) \big) \\
&& + \sum_{j=1}^n \rho_{\nu_j}\big( P(\tau_{\nu_j}+ s_{\nu_j}(\tau,\sigma)) - P(\tau_{\nu_j +1}) \big)
\end{eqnarray*}
and thus the columns of the Jacobian matrix essentially agree with those of $\mathrm{Case}(1,d)$, with the exception that now there is no column corresponding to $\partial G_{\sigma} / \partial \tau_{\mu_1 - 1}$.

The above arguments now carry through almost verbatim; the only substantial difference in this situation is that the Vandermonde estimate \eqref{Vandermonde estimate} becomes
\begin{equation*}
|V(\tau)| \gtrsim \alpha^{M(M-1)/2} (\beta/\alpha)^{(m-1)/2}
\end{equation*}
due to the fact that the parameter tower in this situation is of type 2, as opposed to type 1 in both of the previous cases.

\section{A final remark}

\begin{rmk}\label{strong type remark} In the introduction the possibility of strengthening the restricted weak-type $(p_1, q_1)$ estimate from Proposition \ref{weakthm} to a strong-type estimate was discussed. It was remarked that the strong-type estimate in dimension $d=2$ follows from a result of Gressman \cite{Gressman2013}, but can also be established by combining the analysis contained within the present article with an extrapolation method due to Christ \cite{Christb} (see also \cite{Stovall2009}). Here some further details are sketched. The key ingredients in Christ's extrapolation technique are certain `trilinear' variants of the estimates \eqref{equivweake} and \eqref{equivweakf}. Recall, to prove the weak-type bound it sufficed to show \emph{either} \eqref{equivweake} \emph{or} \eqref{equivweakf} holds since both these estimates are equivalent. This equivalence breaks down when one passes to the trilinear setting and to establish the strong-type inequality one must prove \emph{both} the trilinear version of \eqref{equivweake} \emph{and} the trilinear version of \eqref{equivweakf} hold. This can be achieved in the $d=2$ case by introducing an ``inflation'' argument (see \cite{Christ} and also \cite{Gressman2006}). One may attempt to apply the same techniques in higher dimensions but now the Jacobian arising from the inflation is rather complicated. The question of whether or not this Jacobian can be effectively estimated remains unresolved. It is possible that the inflation argument is not required when $d$ belongs to a certain congruence class modulo $3$ and potentially the strong-type bound could be established more directly from existing arguments in this situation. 

\end{rmk}

\appendix

\section*{Appendix: The method of Dendrinos and Stovall}

This final section details the construction of the sequence of sets $\{U_k\}_{k=1}^{\infty}$ featured in Lemma \ref{towerlem}. The argument here is due to Dendrinos and Stovall \cite{Dendrinos}. At this point some preliminary definitions and remarks are pertinent. Observe
\begin{equation*}
\langle A\chi_E\,,\, \chi_F \rangle = \int_{\Sigma} \chi_F(\pi_1(x,r,t)) \chi_E(\pi_2(x,r,t)) \lambda_P(t)\,\ud x\ud r \ud t
\end{equation*}
where $\Sigma := \R^d \times [1,2] \times I$ and $\pi_1 \colon \Sigma \to \R^d \times [1,2]$ and $\pi_2 \colon \Sigma \to \R^d$ are the mappings
\begin{equation*}
\pi_1(x,r,t) := (x,r), \qquad \pi_2(x,r,t) := x - rP(t).
\end{equation*}
Define the $\pi_j$-fibres to be the sets $\pi_j^{-1}\circ\pi_j(x,r,t)$ for $(x,r,t) \in \Sigma$ and $j = 1,2$. Thus, the $\pi_1$-fibres form a partition of $\Sigma$ into a continuum of curves (which are simply parallel lines) whilst the $\pi_2$-fibres partition $\Sigma$ into a continuum of 2-surfaces. 
Writing 
\begin{equation*}
U := \pi_1^{-1}(F) \cap \pi_2^{-1}(E) = \left\{ (x,r,t) \in \Sigma : \pi_1(x,r,t) \in F\textrm{ and } \pi_2(x,r,t) \in E  \right\}
\end{equation*}
it follows that
\begin{equation*}
\langle A\chi_E\,,\, \chi_F \rangle = \int_{\Sigma} \chi_U(x,r,t)\lambda_P(t)\,\ud x\ud r \ud t.
\end{equation*}
The sets $\{U_k\}_{k=0}^{\infty}$ are defined recursively. To construct the initial set $U_0$, let
\begin{equation*}
B_0 := \{(x,r,t) \in U : 0 < t < (\alpha/2\kappa)^{\kappa}\}.
\end{equation*}
Then, recalling the definition of $\lambda_P$ and applying Fubini's theorem, it follows that
\begin{eqnarray*}
\int_{\Sigma} \chi_{B_0}(x,r,t) \,\lambda_P(t)\ud x \ud r \ud t &=& \int_{F} \int_0^{(\alpha/2\kappa)^{\kappa}} \chi_E(x - rP(t))\, \lambda_P(t)\ud t \ud x \ud r \\
&\leq& \frac{1}{2}\alpha|F| =  \frac{1}{2} \int_{\Sigma} \chi_U(x,r,t)\,\lambda_P(t)\ud x \ud r \ud t.
\end{eqnarray*}
Define $U_0 := U\setminus B_0$ so that
\begin{equation*}
\int_{\Sigma} \chi_{U_0}(x,r,t) \,\lambda_P(t)\ud x \ud r \ud t \geq \frac{1}{2}\int_{\Sigma} \chi_{U}(x,r,t) \,\lambda_P(t)\ud x \ud r \ud t.
\end{equation*}
Note that this definition will ensure property iii) holds for the sequence of refinements. Now suppose the set $U_{k-1}$ has been defined for some $k \geq 1$ and satisfies the conditions stipulated in the proof of Lemma \ref{towerlem}.

\subsection*{Case $k \equiv d \mod 2$.} In order to ensure the property \eqref{ds2} holds in this case, the following refinement procedure is applied. Let $B_{k-1}$ denote the set
\begin{equation*}
\left\{ (x,r,t) \in U_{k-1} : \int_1^2\int_I \chi_{U_{k-1}}(x - rP(t) + \rho P(\tau), \rho, \tau) \,\lambda_P(\tau)\ud \tau\ud \rho \leq 4^{-(k+1/2)} \beta \right\}.
\end{equation*}
The map $(\rho, \tau) \mapsto (x - rP(t) + \rho P(\tau),\rho, \tau)$ parametrises the fibre $\pi_2^{-1}(\pi_2(x,r, t))$ and so $B_{k-1}$ is precisely the set of all points belonging to $\pi_2$-fibres which have a ``small'' intersection with $U_{k-1}$. Removing the parts of $U_{k-1}$ lying in these fibres should not significantly diminish the measure of the set and indeed, by Fubini's theorem and a simple change of variables,
\begin{eqnarray}
\nonumber
\int_{\Sigma} \chi_{B_{k-1}}(x,r,t) \,\lambda_P(t)\ud t \ud x\ud r &=& \int_{\R^n\times[1,2]}\int_I \chi_{B_{k-1}}(x+rP(t),r,t) \,\lambda_P(t)\ud t \ud x\ud r \\
\nonumber
&\leq& \int_{\{ x \in E \,:\, T_{k-1}(x) \leq  4^{-(k+1/2)}\beta\}} T_{k-1}(x)\, \ud x \\
\nonumber
&\leq&  4^{-(k+1/2)}\beta|E| \\
\label{Dendrinos and Stovall 1}
&\leq& \frac{1}{2}\int_{\Sigma} \chi_{U_{k-1}}(x,r,t) \,\lambda_P(t)\ud x \ud r \ud t
\end{eqnarray}
where 
\begin{equation*}
T_{k-1}(x) := \int_1^2\int_I \chi_{U_{k-1}}(x+\rho P(\tau),\rho,\tau) \,\lambda_P(\tau)\ud \tau\ud \rho.
\end{equation*}
Note that the inequality \eqref{Dendrinos and Stovall 1} is due to property i) of the sets $U_j$ for $1 \leq j \leq k-1$, stated in Lemma \ref{towerlem}.  
Thence, letting $U_{k-1}' := U_{k-1} \setminus B_{k-1}$ it follows that
\begin{equation}\label{U bound}
\int_{\Sigma} \chi_{U_{k-1}'}(x,r,t) \,\lambda_P(t)\ud x \ud r \ud t \geq \frac{1}{2}\int_{\Sigma} \chi_{U_{k-1}}(x,r,t) \,\lambda_P(t)\ud x \ud r \ud t.
\end{equation}
Now, recalling $I = (a,b)$, define $B_{k-1}'$ to be the set
\begin{equation*}
\left\{ (x,r,t) \in U_{k-1}' : \int_1^2\int_t^b \chi_{U_{k-1}'}(x - rP(t) + \rho P(\tau),\rho, \tau) \,\lambda_P(\tau)\ud \tau\ud \rho \leq 4^{-(k+1)}\beta \right\}.
\end{equation*}
Given $x \in \pi_2(U_{k-1}')$, the fibre-wise nature of the definition of $U_{k-1}'$ implies for $(x+rP(t),r,t) \in U_{k-1}'$ if and only if $(x+rP(t),r,t) \in U_{k-1}$ and consequently
\begin{equation}\label{ref1}
\int_1^2\int_I \chi_{U_{k-1}'}(x+rP(t),r,t) \,\lambda_P(t)\ud t\ud r  \geq 4^{-(k+1/2)} \beta.
\end{equation}
On the other hand,
\begin{equation}\label{ref2}
\int_1^2\int_I \chi_{B_{k-1}'}(x+rP(t),r,t) \, \lambda_P(t)\ud t\ud r = 4^{-(k+1)}\beta.
\end{equation}
Indeed, the left-hand side can be expressed as
\begin{equation*}
\nu_P\big(\big\{(r,t) \in K(x) : \nu_P\big(K(x) \cap ([1,2] \times (t,b))\big) \leq 4^{-(k+1)}\beta\big\}\big)
\end{equation*} 
for $K(x) \subseteq [1,2] \times I$ a measurable subset. The identity \eqref{ref2} is now a consequence of the fact that for any measure $\nu$ on $\R^2$ which is, say, absolutely continuous with respect to Lebesgue measure, 
\begin{equation*}
\nu\big(\big\{ (r,t) \in K : \nu\big(K \cap (\R \times (t, \infty))\big) \leq u \big\}\big) = u
\end{equation*}
 for all $0< u < \nu(K)$ and all $K \subseteq \R^2$ measurable.

Thence, combining \eqref{ref1} and \eqref{ref2} it follows that
\begin{equation*}
\int_1^2\int_I \chi_{B_{k-1}'}(x+rP(t),r,t) \,\lambda_P(t)\ud t\ud r \leq \frac{1}{2} \int_1^2\int_I \chi_{U_{k-1}'}(x+rP(t),r,t) \lambda_P(t)\ud t\ud r
\end{equation*}
whenever $x \in \pi_2(U_{k-1}')$. Defining $U_{k}:= U_{k-1}' \setminus B_{k-1}'$, one observes that
\begin{eqnarray*}
\int_{\Sigma} \chi_{U_{k}}(x,r,t) \,\lambda_P(t) \ud x\ud r\ud t  &=& \int_{\pi_2(U_{k-1}')} \int_1^2\int_I \chi_{U_k}(x+rP(t),r,t) \,\lambda_P(t)\ud t\ud r \ud x \\
&\geq& \frac{1}{2}\int_{\pi_2(U_{k-1}')} \int_1^2\int_I \chi_{U_{k-1}'}(x+rP(t),r,t) \,\lambda_P(t)\ud t\ud r \ud x \\
&=& \frac{1}{4}\int_{\Sigma} \chi_{U_{k-1}}(x,r,t) \,\lambda_P(t)\ud x\ud r\ud t.
\end{eqnarray*}
Moreover, the set $U_k$ is easily seen to satisfy \eqref{ds2}.

\subsection*{Case $k \not \equiv d \mod 2$.} It remains to define the set $U_k$ under the assumption $k \not \equiv d \mod 2$, ensuring property \eqref{ds1} is satisfied. Here one is concerned with the fibres of the map $\pi_1$. Define
\begin{equation*}
B_{k-1} := \left\{ (x,r,t) \in U_{k-1} : \int_I \chi_{U_{k-1}}(x,r, \tau) \,\lambda_P(\tau)\ud \tau \leq  4^{-(k+1/2)} \alpha \right\}.
\end{equation*}
Notice that the map $\tau \mapsto (x,r, \tau)$ parametrises the fibre $\pi_1^{-1}(\pi_1(x,r,t))$ and so $B_{k-1}$ is the collection of all points $(x,r,t)$ in $U_{k-1}$ which belong to $\pi_1$-fibres which have a ``small'' intersection with $U_{k-1}$. Reasoning analogously to the previous case, if one defines $U_{k-1}' := U_{k-1} \setminus B_{k-1}$ it follows that \eqref{U bound} holds in this case. Finally, let 
\begin{equation*}
B_{k-1}' := \left\{ (x,r,t) \in U_{k-1}' : \int_t^b \chi_{U_{k-1}'}(x,r, \tau) \,\lambda_P(\tau)\ud \tau \leq  4^{-(k+1)} \alpha \right\}
\end{equation*}
and $U_{k} := U_{k-1}'\setminus B_{k-1}'$. Again arguing as in the previous case, it follows that
\begin{equation*}
\int_{\Sigma} \chi_{U_{k}}(x,r,t) \,\lambda_P(t)\ud x \ud r \ud t \geq \frac{1}{4}\int_{\Sigma} \chi_{U_{k-1}}(x,r,t) \,\lambda_P(t)\ud x \ud r \ud t.
\end{equation*}
This recursive procedure defines a sequence of sets with all the desired properties.

\bibliography{Reference}
\bibliographystyle{amsplain}

\end{document}